\renewcommand{\epsilon}{\varepsilon}
\numberwithin{equation}{section}
\newtheoremstyle{thmlemcorr}{10pt}{10pt}{\itshape}{}{\bfseries}{.}{10pt}{{\thmname{#1}\thmnumber{ #2}\thmnote{ (#3)}}}
\newtheoremstyle{thmlemcorr*}{10pt}{10pt}{\itshape}{}{\bfseries}{.}\newline{{\thmname{#1}\thmnumber{ #2}\thmnote{ (#3)}}}
\newtheoremstyle{defi}{10pt}{10pt}{\itshape}{}{\bfseries}{.}{10pt}{{\thmname{#1}\thmnumber{ #2}\thmnote{ (#3)}}}
\newtheoremstyle{remexample}{10pt}{10pt}{}{}{\bfseries}{.}{10pt}{{\thmname{#1}\thmnumber{ #2}\thmnote{ (#3)}}}
\newtheoremstyle{ass}{10pt}{10pt}{}{}{\bfseries}{.}{10pt}{{\thmname{#1}\thmnumber{ A#2}\thmnote{ (#3)}}}
\theoremstyle{thmlemcorr}
\newtheorem{theorem}{Theorem}
\numberwithin{theorem}{section}
\newtheorem{lemma}[theorem]{Lemma}
\newtheorem{corollary}[theorem]{Corollary}
\newtheorem{proposition}[theorem]{Proposition}
\theoremstyle{thmlemcorr*}
\newtheorem{theorem*}{Theorem}
\newtheorem{lemma*}[theorem]{Lemma}
\newtheorem{corollary*}[theorem]{Corollary}
\newtheorem{proposition*}[theorem]{Proposition}
\newtheorem{problem*}[theorem]{Problem}
\newtheorem{conjecture*}[theorem]{Conjecture}
\theoremstyle{defi}
\newtheorem{definition}[theorem]{Definition}
\theoremstyle{remexample}
\newenvironment{remark}
  {\pushQED{\qed}\remarkx}
  {\popQED\endremarkx}
\newenvironment{example}
  {\pushQED{\qed}\examplex}
  {\popQED\endexamplex}
\theoremstyle{ass}
\newcommand{\Acal}{\mathcal{A}}
\newcommand{\Fcal}{\mathcal{F}}
\newcommand{\Pcal}{\mathcal{P}}
\newcommand{\Qcal}{\mathcal{Q}}
\newcommand{\Scal}{\mathcal{S}}
\DeclareMathOperator{\dist}{dist}
\DeclareMathOperator{\supp}{supp}
\newcommand{\normb}[1]{\bigl\|#1\bigr\|}
\newcommand{\abslr}[1]{\left|#1\right|}
\newcommand{\absb}[1]{\bigl|#1\bigr|}
\newcommand{\absB}[1]{\Bigl|#1\Bigr|}
\newcommand{\N}{\mathbb{N}}
\newcommand{\R}{\mathbb{R}}
\newcommand{\loc}{\mathrm{loc}}
\newcommand{\eps}{\epsilon}
\DeclareMathOperator{\Div}{div}
\def\XXint#1#2#3{{\setbox0=\hbox{$#1{#2#3}{\int}$}
\vcenter{\hbox{$#2#3$}}\kern-.5\wd0}}
\DeclarePairedDelimiter\abs{\lvert}{\rvert}
\DeclarePairedDelimiter{\norm}{\lVert}{\rVert}
\newcommand{\Rn}{\R^{n}}
\renewcommand{\phi}{\varphi}
\newcommand{\weakto}{\rightharpoonup}
\def\XXint#1#2#3{{\setbox0=\hbox{$#1{#2#3}{\int}$}
     \vcenter{\hbox{$#2#3$}}\kern-.5\wd0}}
\newcommand{\Rmn}{\mathbb{R}^{m \times n}}
\newcommand{\Lip}{\mathrm{Lip}}
\g@addto@macro\bfseries{\boldmath}
\DeclareMathOperator{\diver}{div}
\renewcommand{\O}{\Omega}
\renewcommand{\d}{\delta}
\newcommand{\f}{\varphi}
\def\XXint#1#2#3{{\setbox0=\hbox{$#1{#2#3}{\int}$}
		\vcenter{\hbox{$#2#3$}}\kern-.5\wd0}}
\newcommand{\abss}[1]{\left\langle #1 \right\rangle}
\title[$\Gamma$-Convergence involving nonlocal gradients with varying horizon]
{$\Gamma$-Convergence involving nonlocal gradients with varying horizon: Recovery of local and fractional models}
\author{Javier Cueto}
\address{Department of Mathematics, Faculty of Sciences, Universidad Aut\'onoma de Madrid, 28049 Madrid, Spain}
\email{javier.cueto@uam.es}
\author{Carolin Kreisbeck}
\address{Mathematisch-Geographische Fakult\"at, Katholische Universit\"at Eichst\"att-Ingolstadt, Osten\-stra{\ss}e 28, 85072 Eichst\"att, Germany}
\email{carolin.kreisbeck@ku.de}
\author{Hidde Sch\"{o}nberger}
\address{Mathematisch-Geographische Fakult\"at, Katholische Universit\"at Eichst\"att-Ingolstadt, Osten\-stra{\ss}e 28, 85072 Eichst\"att, Germany}
\email{hidde.schoenberger@ku.de}
\begin{document}
\maketitle

\thispagestyle{empty}
\begin{abstract}  
This work revolves around the rigorous asymptotic analysis of models in nonlocal hyperelasticity. The corresponding variational problems involve integral functionals depending on nonlocal gradients with a finite interaction range $\delta$, called the horizon. After an isotropic scaling of the associated kernel functions, we prove convergence results in the two critical limit regimes of vanishing and diverging horizon. While the nonlocal gradients localize to the classical gradient as $\delta\to 0$, we recover the Riesz fractional gradient as $\delta\to \infty$, irrespective of the nonlocal gradient we started with. Besides rigorous convergence statements for the nonlocal gradients, our analysis in both cases requires compact embeddings uniformly in $\d$ as a crucial ingredient. These tools enable us to derive the $\Gamma$-convergence of quasiconvex integral functionals with varying horizon to their local and fractional counterparts, respectively.

\vspace{8pt}

\noindent\textsc{MSC (2020): 49J45, 35R11, 74A70} 
\vspace{8pt}

\noindent\textsc{Keywords:} nonlocal variational problems, nonlocal and fractional gradients,  localization, $\Gamma$-convergence
\vspace{8pt}

\noindent\textsc{Date:} \today.
\end{abstract}

\maketitle
\thispagestyle{empty}

\section[Introduction]{Introduction}

Nonlocal-to-local limits constitute a pivotal aspect in nonlocal modeling. 
They can provide a useful consistency check in confirming the compatibility of a (new) nonlocal model with a local counterpart that is covered by well-established theories.  
More generally speaking, the asymptotic analysis of critical parameter regimes can yield new insights about limit models from their approximations and vice versa. In this paper, we address these topics in the context of models with nonlocal gradients, which have seen a great rise in interest in the last few years, see e.g.,~\cite{ShS2015, Comi1, Sil20, MeS, BeCuMC, BeCuMC23, KreisbeckSchonberger, EGM22}.

For a general radial kernel $\rho$, the nonlocal gradient of a function $u:\R^n\to \R^m$ associated to $\rho$ is defined as
\begin{align}\label{Drho_intro}
	D_{\rho} u(x)=\int_{\Rn} \frac{u(x)-u(y)}{|x-y|}\otimes \frac{x-y}{|x-y|} \rho(x-y) \, dy \quad \text{for $x \in \Rn$},
\end{align}
whenever this integral exists. A widely studied special case is the Riesz fractional gradient given by $D^s:=D_{\rho^s}$ with $\rho^s=|\cdot|^{-(n+s-1)}$ for the fractional parameter $s\in (0,1)$. It satisfies natural physical invariance requirements \cite{Sil20} and was brought to the spotlight by Shieh \& Spector \cite{ShS2015,ShS2018}, who established useful counterparts of results from classical Sobolev space theory and showed that the associated function spaces coincide with the Bessel potential spaces $H^{s,p}(\Rn;\R^m)$.  This paved the way for the forthcoming works on fractional variational problems \cite{BeCuMC,BeCuMC21,KreisbeckSchonberger}, which were proposed as alternatives to the standard models in continuum mechanics. Due to the reduced regularity requirements imposed by the fractional derivatives in comparison to the classical ones,  these models admit a broader class of admissible deformations, which allows to account also for discontinuity effects, such as fracture and cavitation.

Despite the desirable properties of the Riesz fractional gradient, an intrinsic drawback from the perspective of continuum mechanical modeling is that it involves integration over the whole space $\Rn$, which is not suitable for models on bounded domains. This shortcoming can be resolved by considering kernel functions $\rho$ with compact support, meaning that the range of interactions between individual points, called the horizon $\d>0$, is finite. In~\cite{BeCuMC23}, Bellido, Cueto \& Mora-Corral used finite-horizon fractional gradients as a basis to propose models of nonlocal hyperelasticity. 

Note that the concept of a horizon stems originally from peridynamics \cite{Silling2000, SiLe10}, a nonlocal formulation of continuum mechanics  that, in contrast to classical modeling, avoids the use of derivatives, and instead considers the interaction between individual particles that are not necessarily at an infinitesimal distance.
Since its introduction in the 2000s, it has led to a vast literature, ranging from applied to theoretical contributions, see e.g.~\cite{MaO14, BFG17, DDG19}. While energetic approaches in the context of bond-based peridynamics typically involve double-integrals, the energy functionals of nonlocal hyperelasticity, which are integrals depending on nonlocal gradients, can be interpreted in the context of state-based peridynamics~\cite{SEW07}; one of the advantages of this framework is that it allows to model a broad range of material properties, e.g., general Poisson ratios in isotropic elastic materials, in contrast to the bond-based formulation~\cite{SEW07}.
%restrict the Poisson ratio of the material to be $\frac{1}{4}$, as opposed to the bond-based formulation~\cite{SEW07}.

A first step in putting the nonlocal hyperelastic models on a solid mathematical foundation is to guarantee the existence of solutions, which has been addressed in~\cite{BeCuMC23, BeCuMC23b, CuKrSc23} for the case of finite-horizon gradients. The arguments rely essentially on two key techniques: 
a nonlocal version of the fundamental theorem of calculus~\cite{BeCuMC23}, which is needed to prove Poincar\'{e} inequalities and compact embeddings, and a translation method established in~\cite{BeCuMC23, CuKrSc23} that expresses the nonlocal gradient as the classical gradient composed with a convolution. These tools were recently extended in \cite{BMS24} to general nonlocal gradients with compactly supported radial kernels, and we utilize this in Section~\ref{sec:existence} to develop an existence theory in this broadened setting. \medskip

Our focus in this work lies on the study of the critical parameter regimes for the horizon $\delta$, which can be seen as an important next step towards understanding and validating nonlocal hyperelasticity. While the widely used bond-based models are only able to recover a considerably restrictive class of models through a nonlocal-to-local limit passage \cite{BeCuMC20, MeS23}, we establish in this paper that the models involving nonlocal gradients are compatible with their local counterpart via a vanishing horizon limit. For a complete picture of the horizon-dependence, we also analyze the other extreme regime of diverging horizon, providing a rigorous connection with purely fractional models.

In the following, we adopt the framework for general nonlocal gradients from the recent paper~\cite{BMS24} by Bellido, Mora-Corral \& Sch\"onberger. Starting with a fixed radial kernel $\rho$ satisfying the hypotheses of~\cite{BMS24} (see \ref{itm:h0}-\ref{itm:upper} in Section~\ref{sec:nonlocalgradients}) with horizon equal to 1, i.e., $\supp \rho = \overline{B_1(0)}$,
we apply an isotropic rescaling to obtain the kernels
\begin{equation}\label{eq:scalingintro}
\rho_\d  = c_\d \rho\left( \frac{\, \cdot\,}{\d}\right)
\end{equation}
with horizon $\d>0$ and scaling constants $c_\delta>0$ to be chosen suitably depending on the targeted parameter regime for $\d$,
cf.~(i) and (ii) below;
examples of admissible realizations of kernel functions $\rho$ can be found in Example~\ref{ex:kernels}.

Our aim in this work is to study the asymptotic behavior of the nonlocal gradients associated to the kernels $\rho_\d$ in~\eqref{eq:scalingintro} for both limits of vanishing and diverging horizon, that is,
\begin{center}
(i)\, $\d \to 0$ \qquad \text{and} \qquad (ii)\, $\d \to \infty$,
\end{center}
and prove the convergence of minimizers via $\Gamma$-convergence (cf.~\cite{DalMaso, Braides}) for the corresponding families of $\delta$-dependent functionals $\Fcal_\delta$. They consist of vectorial integrals of the form
\begin{equation}\label{eq:fcaldintro}
\Fcal_\d (u):= \int_{\O_\d} f(x,D_{\rho_\d} u)\,dx, 
\end{equation}
where the involved quantities are given as follows: The set $\Omega\subset \R^n$ is a bounded domain, $\O_\d = \O+B_\d(0)$ is its expansion by the horizon parameter, and the integrand $f:\R^n\times \R^{n\times m}\to \R$ is assumed to have $p$-growth for $p \in (1,\infty)$ and to be quasiconvex in its second argument. The space of admissible functions for~\eqref{eq:fcaldintro} is $H^{\rho,p,\d}_0(\O;\R^m)$, the natural nonlocal Sobolev space associated to the gradient $D_{\rho_\d}$ with a zero complementary-value conditions, meaning that the functions vanish in $\O^c$, see~Section~\ref{se: preliminaries} for more details. Let us now give a brief overview of our findings on the two limit passages $\d \to 0$ and $\d \to \infty$. \medskip

\textbf{(i) Localization via shrinking horizon $\delta\to 0$.} With the scaling factors $c_\d = \d^{-n}$ in~\eqref{eq:scalingintro}, which preserves the mass of the kernel $\rho_\d$, we confirm for each $u\in W^{1,p}(\R^n;\R^m)$ the convergence of the nonlocal gradients to the classical one, precisely,
\begin{align*}
D_{\rho_\d}u \to  \nabla u \quad \text{ in $L^p(\R^n;\R^{m\times n})$ as $\delta\to 0$,}
\end{align*} 
see Lemma~\ref{le:recovery}; for sufficiently smooth functions, the convergence is uniform and the optimal rate of convergence is explicitly determined by $\d^2$.

As our main result within (i), Theorem~\ref{th: Gamma convergence to 0} states the $\Gamma$-convergence of $(\Fcal_\d)_\d$ from~\eqref{eq:fcaldintro} with respect to the strong $L^p$-topology as $\delta\to 0$ to the limit functional $\Fcal_0$ given by
\[
\Fcal_0 (u) = \int_{\O} f(x, \nabla u)\,dx \quad  \text{ for $u \in W^{1,p}_0(\O;\R^m)$,}
\]
and provides along with this, the corresponding equi-coercivity of $(\Fcal_\d)_\d$; the latter constitutes the major novelty of Theorem~\ref{th: Gamma convergence to 0}, as explained below. Consequently, the minimizers of $\Fcal_\d$, which exist by Theorem~\ref{th:existence}, converge (up to subsequences) in $L^p$ to a minimizer of $\Fcal_0$.

The key technical ingredient for our proof of equi-coercivity for $(\Fcal_\d)_\d$ is the estimate~\eqref{eq:coercivityintro}, which can be seen as an enhanced Poincar\'e-type inequality. The proof of \eqref{eq:coercivityintro} uses the isotropic scaling from \eqref{eq:scalingintro} to identify the dependence of the Fourier symbol of $D_{\rho_\d}$ on $\d$. Together with the results on the Fourier symbol of $D_\rho$ for a fixed kernel \cite{BMS24}, we then deduce from the Mihlin-H\"{o}rmander theorem that the spaces $H^{\rho,p,\d}(\O;\R^m)$ do not change with $\d>0$ and that there is a $\delta$-independent constant $C>0$ such that
\begin{equation}\label{eq:coercivityintro}
\norm{u}_{H^{\sigma,p}(\Rn;\R^m)} \leq C \norm{D_{\rho_\d} u}_{L^p(\Rn;\Rmn)} \quad \text{for all $u \in H^{\rho,p,\d}(\O;\R^m)$ and $\d \in (0,1]$,}
\end{equation}
where $\sigma >0$ is related to the kernel $\rho$, see Theorem~\ref{th:comparison} and Corollary~\ref{cor:compactness}. 

To set our contribution into context with the existing literature, we mention that closely related localization results for nonlocal gradients in various relevant topologies can be found in \cite{MeS} or deduced from the results in the recent paper \cite{Arr23} by Arroyo-Rabasa, which addresses more general nonlocal first-order linear operators.
Mengesha \& Spector in~\cite[Theorem~1.7]{MeS} also present a first $\Gamma$-convergence result for scalar and convex variational problems in their setting. Beyond the fact that we consider more general quasiconvex integrands in the vectorial case, the only minor difference with our set-up (see~\eqref{Drho_intro} and~\eqref{eq:fcaldintro}) lies in the definition of the nonlocal gradient, where they consider interactions only between points within the domain $\Omega$. Accordingly, some of our arguments regarding the liminf-inequality and the construction of recovery sequences share similarities with~\cite{MeS}.
However, in contrast to our work,~\cite{MeS} does not contain any compactness results (uniformly in $\delta$) or equi-coercivity results, and, therefore, cannot guarantee the existence or the convergence of minimizers for the involved integral functionals. On the other hand, in a different setting of non-symmetric half-space nonlocal gradients for the case $p=2$, such compactness results have recently been obtained uniformly in $\d$ \cite{HMT24}. \medskip

\textbf{(ii) Fractional models via diverging horizon $\d\to \infty$.} For the regime of large horizons, we identify the scaling factors $c_\d = \overline{\rho}(1/\d)^{-1}$ in~\eqref{eq:scalingintro}, with $\overline{\rho}$ the radial representation of $\rho$, and assume additionally that the kernels $\rho_\d$ converge pointwise to a function $\rho_{\infty}$; the scaling factors ensure that $\rho_\infty$ is equal to 1 on the unit sphere. It turns out that the limit kernel $\rho_{\infty}$ will always be a fractional kernel, that is, 
\begin{align}\label{rhoinfty_intro}
\rho_\infty=|\cdot|^{-(n+s_\infty-1)}
\end{align} 
with some $s_\infty \in (0,1)$ characteristic for $\rho$, 
see~Lemma~\ref{le: fractional kernel as limit}. Even though this observation may be surprising, given that $\rho$ will generally not have a fractional singularity at the origin, it follows because of the fact that the radial representation of $\rho_\infty$ gains multiplicativity through the limit process, and is thus, a power function.

In view of~\eqref{rhoinfty_intro}, we then deduce in Proposition~\ref{prop:convergenceinfinite} the convergence of the nonlocal gradients to a Riesz fractional gradient. Precisely, it holds for $u \in W^{1,p}(\R^n;\R^n)$ that
\begin{align}\label{convergence_Ddelta_intro}
D_{\rho_\d} u\to  D^{s_\infty}u \quad \text{ in $L^p(\R^n;\R^{m\times n})$ as $\d \to \infty$.}
 \end{align}
Using a similar strategy as in (i), we can specify the dependence of the Fourier symbol of $D_{\rho_\d}$ on $\d$ and show the analogue of \eqref{eq:coercivityintro} for large $\d$, see~Proposition~\ref{prop:poincare2}. This facilitates along with~\eqref{convergence_Ddelta_intro} the proof of our main theorem on $\Gamma$-convergence and equi-coercivity of the family $(\Fcal_\d)_\d$ (cf.~\eqref{eq:fcaldintro}) as $\d \to \infty$; explicitly, Theorem~\ref{th:infty} yields the $\Gamma$-limit $\Gamma(L^p)\text{-}\lim_{\delta\to \infty} \Fcal_\d = \Fcal_\infty$
with
\begin{align}\label{Fcal_frac}
\Fcal_\infty(u) = \int_{\R^n} f(x,D^{s_\infty}u)\,dx  \qquad  \text{for $u \in H^{s_\infty,p}_0(\O;\R^m)$.}
\end{align}
 In particular, we have that the minimizers of $\Fcal_\d$ converge (up to subsequence) in $L^p$ to a minimizer of a variational integral depending on Riesz fractional gradients. 
 
Note that the resulting limit objects, that is, fractional functionals of the type~\eqref{Fcal_frac}, have been well-studied in the last years under different assumptions on the integrand, see~e.g.~\cite{ShS2015, BeCuMC, KreisbeckSchonberger, Sch23}. The aspects addressed include weak lower semicontinuity, relaxation, existence of minimizers, and Euler-Lagrange equations.\medskip

\color{black}
The prototypical example that illustrates our results is a truncated version of the Riesz fractional kernel, that is,
\[
\rho = \frac{w}{\abs{\,\cdot\,}^{n+s-1}}\ \quad  \text{for $s \in (0,1)$},
\]
where $w:\R^n\to [0, \infty)$ is a suitable smooth, radial cut-off function compactly supported in the closed unit ball of $\R^n$, % such that $\supp w = \overline{B_1(0)}$, 
cf.~Example~\ref{ex:kernels}\,a). Applying the two scaling choices of (i) and (ii) gives the scaled kernels 
\[
\rho_\d = \d^{s-1} \frac{w(\,\cdot\,/\d)}{\abs{\, \cdot\, }^{n+s-1}} \ %\ \text{for $\d$ small}
 \quad \text{and} \quad \rho_\d = \overline{w}(1/\d)^{-1} \frac{w(\,\cdot\,/\d)}{\abs{\, \cdot\,}^{n+s-1}}, %\ \ \text{for $\d$ large,}
\]
respectively, where $\overline{w}$ denotes the radial representation of $w$. 
Both these kernels are supported in the ball $\overline{B_\delta(0)}$ of radius $\delta$ around the origin and give rise to the finite-horizon fractional gradients $D^s_\d$ studied in~\cite{BeCuMC23,BeCuMC23b,CuKrSc23,BeCuFoRa, KrS24};  we remark that in those references the dependence of the kernel on $\d$ is not made explicit, since the horizon was always considered fixed. As a consequence of the results in this paper, variational problems involving the gradients $D^s_\d$ are confirmed to approximate their local analogue with classical gradients in the limit of vanishing horizon. As $\delta \to \infty$, we justify the intuitive connection with the fractional case, which reflects an infinite range of interaction.\color{black}

Let us close the introduction by briefly pointing out some interesting further connections for broader context.
 We observe that for fractional gradients, 
  localization also occurs by letting the fractional index $s$ tend to $1$. 
 This was demonstrated for the Riesz fractional gradient $D^s$ in \cite{BeCuMC21} and for its finite-horizon version $D^s_\d$ in \cite{CuKrSc23, KrS24}. 
 Among the first works on nonlocal-to-local limit passages were those in the context of bond-based peridynamics~\cite{MeD, BeMCPe, AAB23, BBM01}, which successfully recover various classical models. However, as shown in \cite{BeCuMC20, MeS23}, there are classical energies 
 that cannot be obtained from double-integral functionals as $\delta$ goes to $0$, as opposed to the setting of this paper. 
Finally, the limit of diverging horizon is less common in the literature. Nevertheless, the convergence of finite-horizon versions of the fractional $p$-Laplacian has been established in \cite{BeO21a,BeO21b} for both $\delta\to 0$ and $\delta\to \infty$, recovering the classical and fractional $p$-Laplacian, respectively.\smallskip

\color{black}
The manuscript is organized as follows. After introducing in Section \ref{se: preliminaries} the nonlocal gradients and associated function spaces that we are going to work with, we specify the required conditions on the kernel $\rho$ and collect some technical tools and preliminary results. The core of the paper are Sections~\ref{se: localization} and \ref{se: convergence in infinity}, where we address the limit analysis of $\d \to 0$ and $\d \to \infty$ to recover the classical and fractional models, respectively. These two sections, which are each presented in a self-contained way, share a parallel structure:  First, showing the convergence of the varying horizon nonlocal gradients, then, compactness statements uniformly in $\d$, and, finally, the $\Gamma$-convergence of the energy functionals in \eqref{eq:fcaldintro}.

\color{black}
\section[Preliminaries]{Preliminaries} \label{se: preliminaries}
\subsection{Notation} 
 We write $\abs{x}=\left(\sum_{i=1}^n x_i^2\right)^{1/2}$ for the Euclidean norm of a vector $x=(x_1,\cdots,x_n)\in \R^n$ and $\abs{A}$ for the Frobenius norm of a matrix $A \in \Rmn$. The ball centered at $x \in \R^n$ and with radius $r>0$ is denoted by $B_r(x)=\{ y \in \R^n : \abs{x-y}<r\}$ and the distance between $x \in \R^n$ and a set $E\subset \R^n$ is written as $\dist(x,E)$. For an open set $\Omega \subset \R^n$ and $\d>0$, we define
\begin{align}\label{Omegadelta}
\Omega_\delta:= \Omega+B_\d(0)=\{x \in \R^n\,:\, \dist(x,\Omega) <\delta\}
\end{align}
and set $\O_{-\d}:=\{x \in \O \,:\, \dist(x,\O^c) > \d\}$. 
The complement of a set $E\subset \R^n$ is indicated by $E^c:=\R^n \setminus E$, its closure by $\overline{E}$, and its boundary by $\partial E$. We take
\[
\mathbbm{1}_E(x) = \begin{cases} 1 &\text{for} \ x \in E,\\
0 &\text{otherwise},
\end{cases}\qquad x\in \R^n,
\]
to be the indicator function of a set $E \subset \R^n$. The extension of a function $u:E \to \R$ to $\Rn$ as zero is sometimes explicitly denoted as $\mathbbm{1}_E\, u$. For a function $u:\Rn \to \R$, we denote its support by $\supp u$, and, if $u$ is Lipschitz continuous, its Lipschitz constant by $\Lip(u)$. 

For $U \subset \Rn$ open, we adopt the standard notation for the space of smooth functions with compact support $C_c^{\infty}(U)$, the Lebesgue space $L^p(U)$ and the Sobolev space $W^{1,p}(U)$ with $p \in [1,\infty]$. The spaces can be extended componentwise to vector-valued functions; the target space is explicitly mentioned in the notation, like, for example, $L^p(U;\R^m)$. We use the usual multi-index notation for partial derivatives $\partial^{\alpha}$ with $\alpha \in \N_0^n$. Our convention for the Fourier transform of $f \in L^1(\R^n)$ is
\begin{equation*}
	\widehat{f}(\xi)=\int_{\Rn} f(x) \, e^{-2\pi i x \cdot \xi} \, dx, \quad \xi \in \R^n,
\end{equation*}
see e.g.,~\cite{Gra14a} for more details. 

For real functions, we use the monotonicity properties of being increasing and decreasing in the non-strict sense.  
%We use increasing and decreasing in the wide sense of the meaning (not strict), while for positive and negative we use the strict definition.  
%A real function $f:I\to \R$\footnote{These notions are used later for functions on bounded intervals, like $(0, \eps)$.} with an interval $I\subset \R$ is 
A function $f:\R\to \R$ is called almost decreasing if there is a $C>0$ such that $f(t) \geq Cf(s)$ for $t \leq s$, and an analogous definition holds for almost increasing. 
\color{black}
 For a radial function $p:\Rn \to \R$, we denote its radial representation by $\overline{p}:[0,\infty) \to \R$, i.e., $p(x) = \overline{p}(\abs{x})$ for $x \in \Rn$,   \color{black} and call $p$ radially decreasing or increasing, if its radial representation is decreasing or increasing, respectively.
 
  Finally, throughout the manuscript, we use generic constants, which may change from line to line without renaming.

\subsection[Nonlocal operators and function spaces]{Nonlocal gradients}\label{sec:nonlocalgradients}
We now introduce the key elements of our setting, that is, the nonlocal gradients for general kernels $\rho$ as recently studied in~\cite{BMS24}; for related work see~also~\cite{DGLZ, Delia, EGM22}, as well as~\cite{BeCuMC23,BeCuMC23b,CuKrSc23} on the special case of finite-horizon fractional gradients. \color{black}

Assume throughout that $\rho:\R^n \setminus \{0\} \to [0,\infty)$ is a radial kernel such that\smallskip
\begin{enumerate}[label = (H\arabic*)]
\setcounter{enumi}{-1}
\item \label{itm:h0} $\inf_{\overline{B_{\epsilon}(0)}} \rho >0 \ \text{for some $\epsilon>0$} \quad \text{and} \quad \rho\, \min\{1,|\cdot|^{-1}\} \in L^1(\R^n).$
\end{enumerate}
Under this condition, the kernel gives rise to an associated nonlocal gradient.
\color{black}
\begin{definition}[Nonlocal gradient]
The nonlocal gradient $D_{\rho} \phi:\R^n \to \R^n$ for $\phi\in C^{\infty}_c(\Rn)$ is given by
\begin{equation*}
D_\rho \phi(x) = \int_{\R^n} \frac{\phi(x)-\phi(y)}{\abs{x-y}}\frac{x-y}{\abs{x-y}}\rho(x-y)\,dy, \quad x \in \R^n.
\end{equation*}
\end{definition}

We collect here some key properties of $D_\rho$ that will be used later on. First of all, with $\overline{\rho}:(0,\infty) \to [0,\infty)$ the radial representation of $\rho$, i.e., $\rho=\overline{\rho}(\abs{\,\cdot\,})$, the nonlocal gradient can be written as the convolution of the classical gradient with the locally integrable function
\[
Q_\rho(x) := \int_{\abs{x}}^{\infty} \frac{\overline{\rho}(r)}{r}\,dr,  \quad x \in \R^n \setminus \{0\},
\]
that is,
\begin{align}\label{eq:translationsmooth}
D_\rho \phi &= Q_\rho * \nabla \phi = \nabla (Q_\rho * \phi) \quad \text{for all $\phi \in C_c^{\infty}(\R^n)$}, 
\end{align}
see~\cite[Propositions~2.6]{BMS24}.
When $\rho \in L^1(\R^n)$, then also $Q_\rho \in L^1(\R^n)$ and one obtains after taking the Fourier transform that
\begin{equation}\label{eq:drhofourier}
\widehat{D_\rho \phi}(\xi) = 2\pi i \xi \widehat{Q}_\rho(\xi) \widehat{\phi}(\xi) \quad \text{for $\xi \in \R^n$},
\end{equation}
see~\cite[Propositions~2.5\,$(iii)$ and 2.6]{BMS24}.  

An important property of the nonlocal gradient is the presence of a duality relation with the nonlocal divergence, as conveyed by the following integration by parts formula, cf.~\cite[Proposition~3.2]{BMS24}. 

\color{black}
\begin{lemma}[Integration by parts, \cite{BMS24}]
	Let $\f\in C^{\infty}_c(\Rn)$ and $\psi\in C^{\infty}_c(\Rn;\Rn)$. Then
	\begin{align}\label{eq:integration by parts}
		\int_{\Rn} D_{\rho}\f \cdot \psi \, dx =- \int_{\Rn} \f \diver_{\rho}\psi  \, dx,
	\end{align}
	where 
	\begin{equation*}
		\diver_{\rho} \psi(x):=\int_{\R^n} \frac{\psi(x)-\psi(y)}{\abs{x-y}}\cdot\frac{x-y}{\abs{x-y}}\rho(x-y)\,dy .
	\end{equation*}
\end{lemma}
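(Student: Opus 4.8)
The statement to prove is the nonlocal integration by parts formula \eqref{eq:integration by parts}. The natural strategy is to reduce it to the classical integration by parts via the translation/convolution identity \eqref{eq:translationsmooth}, combined with the analogous representation of the nonlocal divergence. The key observation is that $Q_\rho$ is a \emph{radial} (hence even) function, so convolution with $Q_\rho$ is self-adjoint on $L^2$; this self-adjointness is exactly what transfers the derivative from $\varphi$ to $\psi$ with the correct sign.

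\textbf{Key steps.} First I would record the divergence counterpart of \eqref{eq:translationsmooth}: for $\psi\in C_c^\infty(\Rn;\Rn)$ one has
\[
\diver_\rho\psi = Q_\rho * \diver\psi = \diver(Q_\rho*\psi),
\]
which follows by applying \eqref{eq:translationsmooth} componentwise (the definition of $\diver_\rho$ is just the sum of the partial nonlocal derivatives of the components of $\psi$, each of which is governed by the same scalar kernel convolution). Next, since $\rho\min\{1,|\cdot|^{-1}\}\in L^1$ by \ref{itm:h0}, the function $Q_\rho$ is locally integrable and the convolutions $Q_\rho*\varphi$, $Q_\rho*\psi$ are well-defined smooth functions (they need not have compact support, but they decay, being convolutions of $L^1_{\loc}$-with-suitable-decay against compactly supported smooth functions; more precisely $Q_\rho*\varphi\in W^{1,p}$-type spaces, which suffices). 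Then compute, using \eqref{eq:translationsmooth}, Fubini to move the convolution across the $L^2$ pairing (legitimate because $|Q_\rho(x-y)||\nabla\varphi(y)||\psi(x)|$ is integrable on $\Rn\times\Rn$ — $\nabla\varphi,\psi$ compactly supported and $Q_\rho\in L^1_{\loc}$), and the evenness of $Q_\rho$:
\[
\int_{\Rn} D_\rho\varphi\cdot\psi\,dx = \int_{\Rn}(Q_\rho*\nabla\varphi)\cdot\psi\,dx = \int_{\Rn}\nabla\varphi\cdot(Q_\rho*\psi)\,dx.
\]
Finally, apply the classical integration by parts to $\int_{\Rn}\nabla\varphi\cdot(Q_\rho*\psi)\,dx = -\int_{\Rn}\varphi\,\diver(Q_\rho*\psi)\,dx$ — valid because $\varphi\in C_c^\infty$ and $Q_\rho*\psi$ is $C^1$ — and then rewrite $\diver(Q_\rho*\psi) = \diver_\rho\psi$ using the divergence analogue of \eqref{eq:translationsmooth} from the first step. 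This yields \eqref{eq:integration by parts}.

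\textbf{Main obstacle.} The only genuine subtlety is justifying the manipulations when $\rho$ (equivalently $Q_\rho$) has a singularity at the origin and $Q_\rho*\psi$ fails to be compactly supported, so that one cannot blindly invoke the smooth compactly supported integration by parts. I would handle this by noting that $Q_\rho*\psi$ is globally Lipschitz (even $C^1$ with controlled decay) since $\nabla(Q_\rho*\psi) = Q_\rho*\nabla\psi$ with $\nabla\psi\in C_c^\infty$, so the pairing $\int\nabla\varphi\cdot(Q_\rho*\psi)$ against the compactly supported $\varphi$ is unproblematic and the boundary term vanishes. The Fubini/Fubini–Tonelli step needs the integrability bound above, which is immediate from $Q_\rho\in L^1_{\loc}$ together with the compact supports; alternatively one can first prove the identity for $\rho\in L^1(\Rn)$ (so $Q_\rho\in L^1$, as noted in the excerpt) via the Fourier identity \eqref{eq:drhofourier} and Plancherel, and then pass to the general case under \ref{itm:h0} by an approximation argument truncating the singularity of $\rho$. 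Either route is routine; the essential content is self-adjointness of convolution with the even kernel $Q_\rho$.
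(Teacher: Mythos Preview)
The paper does not supply its own proof of this lemma; it is quoted directly from \cite[Proposition~3.2]{BMS24} without argument. Your proposed proof is correct: the reduction via the convolution representation \eqref{eq:translationsmooth}, self-adjointness of convolution with the even kernel $Q_\rho$, and classical integration by parts all work as you outline, and your justification of the Fubini step (compact supports of $\nabla\varphi$ and $\psi$ together with $Q_\rho\in L^1_{\loc}$) is exactly the right one.

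For comparison, a more standard route---and likely the one taken in \cite{BMS24}---is to argue directly from the double-integral definitions: write out $\int D_\rho\varphi\cdot\psi\,dx$ as an integral over $\Rn\times\Rn$, swap the variables $x\leftrightarrow y$ using the radiality of $\rho$ and the antisymmetry of $(x-y)/|x-y|$, and symmetrize. That argument never introduces $Q_\rho$ and works under the bare hypothesis \ref{itm:h0}. Your approach is slightly more structural in that it factors through the translation identity~\eqref{eq:translationsmooth} (itself a nontrivial input from \cite{BMS24}), which makes the computation cleaner but relies on a heavier preliminary. Both are perfectly acceptable; your version has the advantage of reinforcing the theme of the paper, namely that properties of $D_\rho$ reduce to classical ones via $Q_\rho*$.
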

Note that the integration over the whole space in~\eqref{eq:integration by parts} is sufficient for our purpose, since we will be working mainly with compactly supported functions; an alternative version of integration by parts over a bounded domain giving rise to boundary terms was recently proven in~\cite{BeCuFoRa}.

The previous lemma motivates a distributional definition of nonlocal gradients.
\begin{definition}[Weak nonlocal gradients] Let $u\in L^1(\Rn)+L^\infty(\Rn)$. We say that $V \in L^1_{\loc}(\Rn;\Rn)$ is the weak nonlocal gradient of $u$, and write $D_\rho u =V$, if
	\begin{equation*}
		\int_{\Rn} V \cdot\psi \, dx = - \int_{\Rn} u \diver_{\rho} \psi \, dx \quad \, \text{ for all } \, \psi \in C^{\infty}_c(\Rn; \Rn).
	\end{equation*}
\end{definition}

In analogy to classical Sobolev spaces, one introduces for $p \in (1,\infty)$ the $\rho$-nonlocal Sobolev spaces as 
\[
H^{\rho,p}(\R^n):=\{u \in L^p(\R^n)\,:\, D_\rho u \in L^p(\R^n;\R^n)\},
\]
endowed with the norm
\begin{equation} \label{eq: nonlocal norm}
	\norm{u}_{H^{\rho,p}(\R^n)} :=\norm{u}_{L^p(\R^n)} + \norm{D_\rho u}_{L^p(\R^n;\R^n)},
\end{equation}
see~\cite[Definition~3.4]{BMS24}. Note that these spaces can be equivalently characterized as the closure of $C_c^{\infty}(\R^n)$ under the norm in \eqref{eq: nonlocal norm} in light of \cite[Theorem~3.9\,(i)]{BMS24}. Additionally, we define for an open $\O \subset \R^n$ the subspaces
\[
H^{\rho,p}_0(\Omega):=\overline{C_c^{\infty}(\Omega)}^{\norm{\cdot}_{H^{\rho,p}(\R^n)}},
\]
where the elements of $C_c^{\infty}(\O)$ are interpreted as extended to $\Rn$ by zero.  If $\Omega$ is a bounded Lipschitz domain, $H_0^{\rho, p}(\Omega)$ agrees with the complementary-value space of the functions in $H^{\rho,p}(\R^n)$ that are zero in $\Omega^c$, see~\cite[Theorem~3.9\,(iii)]{BMS24}. Prescribed complementary values can be viewed as the nonlocal analogue of Dirichlet boundary conditions in the local setting.

\begin{example}[Riesz fractional gradient and Bessel potential spaces]\label{ex:Rieszfracgrad}
The special choice of kernel function
\begin{align}\label{eq:rhoskernel}
\rho^s:= \frac{1}{|\cdot|^{n+s-1}}\quad \text{ with $s \in (0,1)$}
\end{align}
gives rise to the Riesz $s$-fractional gradient $D^s:=D_{\rho^s}$,  given for $\varphi\in C_c^\infty(\R^n)$ by
\[
D^s \varphi(x)= \int_{\Rn} \frac{\varphi(x)-\varphi(y)}{|x-y|^{n+s}}\frac{x-y}{|x-y|}\, dy, \quad x\in \R^n,
\]
cf.~\cite{ShS2015,ShS2018}. Commonly, $D^s$ features a normalization constant $c_{n,s}$, which we omit here for the sake of a cleaner presentation in Section~\ref{se: convergence in infinity}. 
The associated nonlocal Sobolev space $H^{\rho^s\hspace{-0.07cm},p}(\Rn)$ coincides with the Bessel potential space $H^{s,p}(\Rn)$ as shown in~\cite[Theorem~1.7]{ShS2015}. 
A property we will often exploit is that
\begin{align}\label{embeddingBessel}
\text{$H^{s,p}(\Rn)$ %, and therefore also  $H^{\rho^s\hspace{-0.07cm},p}(\Rn)$, 
is compactly embedded into $L^p_{\loc}(\Rn)$, }
\end{align}
see~e.g.,~\cite[Theorem 2.2]{ShS2018} or \cite[Theorem 2.3]{BeCuMC}. Moreover, we set $H^{s,p}_0(\Omega)=\{u\in H^{s,p}(\R^n): u=0 \ \text{a.e.~in $\Omega^c$}\}$.
\end{example}

Besides the previously introduced hypothesis \ref{itm:h0} on the kernel $\rho$, used for the definition of the nonlocal gradient, we require a few more properties in order to have a wider variety of technical tools, such as compact embeddings and Poincar\'{e} inequalities, at our disposal. In accordance with~\cite{BMS24} (see also \cite[Remark~4.1]{BMS24}), we make the following assumptions: 

\color{black}
Let $\epsilon$ be as in (H0), $\nu >0$ and $0<\sigma \leq \gamma <1$. \smallskip

\begin{enumerate}[label = (H\arabic*)]

\item \label{itm:h1} The function $f_\rho:(0,\infty)\to \R, \ r \mapsto r^{n-2}\overline{\rho}(r)$ is decreasing on $(0,\infty)$ and $r \mapsto r^\nu f_\rho(r)$ is decreasing on $(0,\epsilon)$;

\item \label{itm:derivatives} $f_\rho$ is smooth outside the origin and for every $k\in \N$ there exists a $C_k>0$ with % for $r \in (0,\epsilon)$
\[
\abslr{\frac{d^k}{dr^k}f_\rho(r)} \leq C_k \frac{f_\rho(r)}{r^k} \quad \text{for $r\in (0, \epsilon)$};
\]

\item \label{itm:lower} the function $r \mapsto r^{n+\sigma-1}\overline{\rho}(r)$ is almost decreasing on $(0,\epsilon)$; 

\item \label{itm:upper} the function $r \mapsto r^{n+\gamma-1}\overline{\rho}(r)$ is almost increasing on $(0,\epsilon)$.
\end{enumerate}\smallskip

Most of the time, we will not work directly with these hypotheses, but instead make use of the results and tools proven in \cite{BMS24}; we refer to that paper for a more detailed discussion of the assumptions \ref{itm:h0}-\ref{itm:upper}.  The Riesz potential kernel from \eqref{eq:rhoskernel} satisfies all these properties, as one can easily check. Beyond that, we list here a few examples with compactly supported kernels from \cite[Example~5.1]{BMS24} that fit into the setting. These will be revisited also in the later sections to illustrate our findings. 

\color{black}
\begin{example}[Selected kernel functions $\rho$]\label{ex:kernels}
Let $w \in C_c^{\infty}(\Rn)$ be a non-negative radial function with $w(0)>0$. \smallskip

a) Let $s \in (0,1)$ and suppose that $w/\abs{\,\cdot\,}^{1+s}$ is radially decreasing. Then,
\begin{align*}\label{ex:finitehorizonkernel}
\rho(x) = \frac{w(x)}{\abs{x}^{n+s-1}}, \quad x\in \R^n\setminus \{0\}, 
\end{align*}
satisfies \ref{itm:h0}-\ref{itm:upper} with $\sigma=\gamma=s$. The associated nonlocal gradient $D_\rho$ is referred to as a finite-horizon fractional gradient. In fact, it holds that $H^{\rho,p}(\Rn)=H^{s,p}(\Rn)$ with equivalent norms by \cite[Proposition~3.10]{BMS24}. 

\smallskip

b) Let $s \in (0,1)$ and $\kappa\in \{-1,1\}$. If $\supp(w) \subset \overline{B_1(0)}$ and $w\log^\kappa (1/\abs{\,\cdot\,})/\abs{\, \cdot\,}^{1+s}$ is radially decreasing, then the kernel function given by
\[
\rho(x) = \frac{w(x)\log^\kappa(1/\abs{x})}{\abs{x}^{n+s-1}}, \quad\text{ $x\in \R^n\setminus \{0\}$,} 
\]
satisfies \ref{itm:h0}-\ref{itm:upper} with $\sigma=s$ and any $\gamma \in (s,1)$  if $\kappa=1$ and with any $\sigma \in (0,s)$ and $\gamma = s$ if $\kappa=-1$.  \smallskip

c) Consider a smooth function $s:[0,\infty) \to (0,1)$ and let $w/\abs{\,\cdot\,}^{1+s(\abs{\, \cdot\,})}$ be radially decreasing. Then, 
\[
\rho(x)=\frac{w(x)}{\abs{x}^{n+s(\abs{x})-1}}, \quad x\in \R^n\setminus\{0\}, 
\]
is a kernel with spatially varying fractional parameter satisfying~\ref{itm:h0}-\ref{itm:upper} with $\sigma = \min_{[0,\epsilon]} s$ and $\gamma = \max_{[0,\epsilon]} s$ for any $\epsilon >0$.
\end{example}

\color{black}
The following auxiliary result from \cite[Lemma~4.3, 4.10 and 7.1]{BMS24} will be exploited in Sections~\ref{se: localization} and \ref{se: convergence in infinity} to prove compactness results uniformly in the horizon parameter. It provides bounds on the Fourier transform of $Q_\rho$ and its derivatives in terms of the radial representation of $\rho$.
\color{black}
\begin{lemma}[Estimates on $\widehat{Q}_\rho$ and its derivatives, {\cite{BMS24}}]\label{lem:bms}
Let $\rho:\R^n\setminus\{0\}\to [0, \infty)$ be a radial kernel with compact support satisfying \ref{itm:h0}-\ref{itm:upper}. Then $\widehat{Q}_\rho$ is smooth, positive, and there exists a constant $C>0$ such that
\begin{align}\label{eq:bms1}
\frac{1}{C} \frac{\bar{\rho}(1/\abs{\xi})}{\abs{\xi}^n} \leq \widehat{Q}_\rho(\xi) \leq C \frac{\bar{\rho}(1/\abs{\xi})}{\abs{\xi}^n} \quad \text{for all $\abs{\xi} \geq 1/\epsilon$}.
\end{align}
Moreover, for every $\alpha \in \N_0^n$, one has
\begin{align}\label{eq:bms2}
\absB{\partial^{\alpha} \widehat{Q}_\rho(\xi)} \leq C_{\alpha} \abs{\xi}^{-\abs{\alpha}} \abslr{\widehat{Q}_\rho(\xi)} \quad \text{for all $\xi \not = 0$}
\end{align}
with constants $C_\alpha>0$.
\end{lemma}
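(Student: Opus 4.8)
\textbf{Proof strategy for Lemma~\ref{lem:bms}.}

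The statement is quoted directly from \cite{BMS24} as Lemmas 4.3, 4.10 and 7.1 there, so strictly speaking one only needs to cite it; nevertheless, here is how the argument goes. The plan is to first establish smoothness and positivity of $\widehat{Q}_\rho$, then prove the two-sided bound \eqref{eq:bms1} by a direct asymptotic analysis of the radial integral defining $\widehat{Q}_\rho$, and finally obtain the derivative estimates \eqref{eq:bms2} by differentiating under the integral sign and recognizing that each derivative of the oscillatory kernel produces an extra factor that is controlled by $|\xi|^{-1}$ against the leading term. Throughout, the key input is that $Q_\rho(x)=\int_{|x|}^\infty \overline{\rho}(r)/r\,dr$ is radial, locally integrable (by \ref{itm:h0}), and compactly supported (since $\rho$ is), so $\widehat{Q}_\rho$ is real-analytic away from possible decay issues; positivity follows because $Q_\rho$ is a nonnegative, radially decreasing $L^1$ function, and the Fourier transform of such a function is positive (e.g.\ by writing it as a superposition of indicator functions of balls, whose transforms are positive multiples of Bessel-type functions, or more simply because radial decreasing integrability forces $\widehat{Q}_\rho>0$ via the standard layer-cake / Polya-type argument).

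For the two-sided bound \eqref{eq:bms1}, I would pass to the radial representation: writing $\widehat{Q}_\rho(\xi)=|\xi|^{-n}\Phi(|\xi|)$ where $\Phi$ is (up to dimensional constants) the $n$-dimensional radial Fourier integral of $r\mapsto \overline{Q_\rho}(r/|\xi|)$ rescaled, the task is to show $\Phi(t)\asymp \overline{\rho}(1/t)$ for $t\ge 1/\epsilon$. The heuristic is that for large frequency $t$, the integral $\widehat{Q}_\rho(\xi)=\int_{\R^n}Q_\rho(x)e^{-2\pi i x\cdot\xi}\,dx$ localizes near $x=0$, where $Q_\rho(x)\approx \int_{|x|}^\infty \overline\rho(r)/r\,dr$; hypotheses \ref{itm:h1}, \ref{itm:lower}, \ref{itm:upper} pin down the behaviour of $\overline\rho$ near the origin — in particular \ref{itm:lower}--\ref{itm:upper} say $\overline\rho(r)$ is squeezed between $r^{-(n+\gamma-1)}$ and $r^{-(n+\sigma-1)}$ up to constants, so $Q_\rho(x)\asymp |x|^{-(n-1)}\overline\rho(|x|)$ near $0$ modulo the monotonicity from \ref{itm:h1}. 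Then one compares $\widehat{Q}_\rho$ against the model case of the Riesz kernel $\rho^s$, whose transform is explicitly a power of $|\xi|$, and transfers the asymptotics using the monotonicity hypotheses; the compact support of $\rho$ guarantees the tail of the $x$-integral contributes only a Schwartz-class (hence negligible) error. This comparison-with-the-Riesz-kernel step, carried out carefully with the almost-monotonicity in \ref{itm:lower}--\ref{itm:upper}, is the technical heart of \eqref{eq:bms1}.

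The derivative bound \eqref{eq:bms2} then follows by induction on $|\alpha|$: differentiating $\widehat{Q}_\rho(\xi)=\int Q_\rho(x)e^{-2\pi i x\cdot\xi}\,dx$ in $\xi$ brings down factors of $(-2\pi i x_j)$, i.e.\ $\partial^\alpha\widehat{Q}_\rho(\xi)=\widehat{(m_\alpha Q_\rho)}(\xi)$ with $m_\alpha(x)=\prod(-2\pi i x_j)^{\alpha_j}$; since $|x|^{|\alpha|}Q_\rho(x)$ is again a radial, compactly supported function whose near-origin profile is that of $Q_\rho$ multiplied by $|x|^{|\alpha|}$, one reruns the asymptotic analysis of the first part — scaling now produces exactly the extra factor $|\xi|^{-|\alpha|}$ relative to $\widehat{Q}_\rho(\xi)$ — while the smoothness hypothesis \ref{itm:derivatives} ensures all the relevant integrals converge and that no logarithmic corrections spoil the clean power $|\xi|^{-|\alpha|}$. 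The main obstacle, and the only place real work is needed, is the sharp asymptotic matching in \eqref{eq:bms1}: one must show that the oscillatory cancellation in the Fourier integral does not destroy the expected size $\overline\rho(1/|\xi|)/|\xi|^n$, which is exactly where the almost-monotonicity assumptions \ref{itm:lower} and \ref{itm:upper} (together with the decreasing rescaling in \ref{itm:h1}) are indispensable — they force $Q_\rho$ to be comparable to a genuine fractional-kernel profile on the relevant scale, and for such profiles the Fourier transform is computable up to constants.
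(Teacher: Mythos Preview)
The paper does not prove this lemma; it is quoted from \cite{BMS24} (Lemmas~4.3, 4.10 and 7.1 there), as you correctly note at the outset, so there is no in-paper argument to compare against. Your sketch, however, contains a genuine error that would block the proof.

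The positivity argument is wrong. You claim $\widehat{Q}_\rho>0$ because $Q_\rho$ is a nonnegative, radially decreasing $L^1$ function, suggesting a layer-cake decomposition into indicators of balls ``whose transforms are positive multiples of Bessel-type functions''. But the Fourier transform of $\mathbbm{1}_{B_R(0)}$ in $\R^n$ is proportional to $J_{n/2}(2\pi R|\xi|)/|\xi|^{n/2}$, which oscillates and changes sign; indeed $\mathbbm{1}_{B_1(0)}$ itself is a nonnegative radially decreasing $L^1$ function whose Fourier transform is not positive. Radial monotonicity alone is simply insufficient. In \cite{BMS24} positivity relies on finer structure --- the hypotheses \ref{itm:h1}--\ref{itm:derivatives} on the profile of $\rho$ near the origin, not merely on $Q_\rho$ being decreasing.

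A secondary gap concerns \eqref{eq:bms2}. You write $\partial^\alpha\widehat{Q}_\rho=\widehat{m_\alpha Q_\rho}$ with $m_\alpha(x)=(-2\pi i)^{|\alpha|}x^\alpha$ and propose to ``rerun the asymptotic analysis''. But $x^\alpha Q_\rho$ is no longer radial, so the machinery behind \eqref{eq:bms1} does not transfer directly, and the crude bound $|\widehat{m_\alpha Q_\rho}(\xi)|\le \|\,|\cdot|^{|\alpha|}Q_\rho\|_{L^1}$ yields only a constant, not the required $|\xi|^{-|\alpha|}\widehat{Q}_\rho(\xi)$. The argument in \cite{BMS24} instead works through the Hankel-transform representation of radial Fourier transforms and integrates by parts in the radial variable; this is exactly where the derivative control in \ref{itm:derivatives} enters and cannot be replaced by the soft scaling heuristic you describe.
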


Finally, Poincar\'e-type inequalities will be indispensible tools for our analysis. We present here a particular consequence of~\cite[Theorem~4.11]{BMS24}, that suffices for our setting.

\color{black}
\begin{lemma}[Poincar\'{e} inequalities and compact embedding, {\cite{BMS24}}]\label{lem:compactness}
%a) One can always replace a kernel $\rho$ by a compactly supported one without changing the associated function space, and Poincar\'{e} inequalities carry over as well, cf.~\cite[Proposition~3.10]{BMS24}. \smallskip
Let $\O \subset \R^n$ be open and bounded and suppose that the radial kernel function $\rho$ satisfies \ref{itm:h0}-\ref{itm:upper} and has compact support. Then there is a $C>0$ such that
\begin{align}\label{Poincare}
\norm{u}_{L^p(\Rn)} \leq C \norm{D_\rho u}_{L^p(\Rn;\Rn)} \quad \text{for all $u \in H^{\rho,p}_0(\O)$},
\end{align}
and $H^{\rho,p}_0(\O)$ is compactly embedded into $L^p(\Rn)$. 
\end{lemma}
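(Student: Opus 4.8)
The plan is to deduce both statements from a single continuous embedding $H^{\rho,p}_0(\Omega)\hookrightarrow H^{\sigma,p}(\R^n)$ into the Bessel potential space of order $\sigma$, and then to exploit that every $u\in H^{\rho,p}_0(\Omega)$ is supported in the fixed compact set $\overline{\Omega}$. As a preliminary observation, since $\rho$ has compact support, so does $Q_\rho$ (by its definition), and hence $Q_\rho\in L^1(\R^n)$ (being also locally integrable); combining this with~\eqref{eq:translationsmooth} and Young's inequality shows that for every $\phi\in C_c^\infty(\Omega)$ the function $v:=Q_\rho*\phi$ lies in $W^{1,p}(\R^n)$ with $\nabla v=D_\rho\phi$ and $\norm{v}_{W^{1,p}(\R^n)}\le\max\{\norm{Q_\rho}_{L^1(\R^n)},1\}\,\norm{\phi}_{H^{\rho,p}(\R^n)}$; by density, the identity $\nabla(Q_\rho*u)=D_\rho u$ persists for all $u\in H^{\rho,p}_0(\Omega)$.

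For the embedding, by density it suffices to bound $\norm{\phi}_{H^{\sigma,p}(\R^n)}$ by $C\norm{\phi}_{H^{\rho,p}(\R^n)}$ for $\phi\in C_c^\infty(\Omega)$. On the Fourier side, using $\widehat{v}=\widehat{Q}_\rho\,\widehat{\phi}$ and that $\widehat{Q}_\rho>0$ everywhere (Lemma~\ref{lem:bms}), one can write
\begin{equation*}
\langle\xi\rangle^\sigma\,\widehat{\phi}(\xi)=m(\xi)\,\langle\xi\rangle\,\widehat{v}(\xi),\qquad m(\xi):=\frac{\langle\xi\rangle^{\sigma-1}}{\widehat{Q}_\rho(\xi)},\qquad \langle\xi\rangle:=(1+\abs{\xi}^2)^{1/2}.
\end{equation*}
Since $\norm{\mathcal{F}^{-1}(\langle\cdot\rangle^\sigma\widehat{\phi})}_{L^p(\R^n)}$ is equivalent to $\norm{\phi}_{H^{\sigma,p}(\R^n)}$ and $\norm{\mathcal{F}^{-1}(\langle\cdot\rangle\widehat{v})}_{L^p(\R^n)}$ to $\norm{v}_{W^{1,p}(\R^n)}$, it remains to check that $m$ is an $L^p$-Fourier multiplier, which I would obtain from the Mihlin--H\"ormander theorem. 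For $\abs{\xi}$ bounded, $m$ is smooth with bounded derivatives, because $\widehat{Q}_\rho$ is smooth and strictly positive there and $\langle\cdot\rangle^{\sigma-1}$ is smooth. For $\abs{\xi}$ large, hypothesis~\ref{itm:lower} gives $\overline{\rho}(r)\gtrsim r^{-(n+\sigma-1)}$ for small $r$, so~\eqref{eq:bms1} yields $\widehat{Q}_\rho(\xi)\gtrsim\abs{\xi}^{\sigma-1}$ and hence $m$ is bounded, while the derivative bounds $\abs{\partial^\alpha m(\xi)}\lesssim\abs{\xi}^{-\abs{\alpha}}$ follow by the Leibniz rule from $\abs{\partial^\beta\langle\xi\rangle^{\sigma-1}}\lesssim\langle\xi\rangle^{\sigma-1-\abs{\beta}}$ and from $\abs{\partial^\gamma(\widehat{Q}_\rho(\xi)^{-1})}\lesssim\abs{\xi}^{-\abs{\gamma}}\widehat{Q}_\rho(\xi)^{-1}$, the latter being a consequence of~\eqref{eq:bms2}. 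This establishes the Mihlin condition, so $\norm{\phi}_{H^{\sigma,p}(\R^n)}\lesssim\norm{v}_{W^{1,p}(\R^n)}\lesssim\norm{\phi}_{H^{\rho,p}(\R^n)}$, and extending by density gives $H^{\rho,p}_0(\Omega)\hookrightarrow H^{\sigma,p}(\R^n)$.

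The two conclusions then follow with little extra work. For the compact embedding, take a bounded sequence $(u_k)$ in $H^{\rho,p}_0(\Omega)$; by the above it is bounded in $H^{\sigma,p}(\R^n)$, and all $u_k$ vanish outside $\overline{\Omega}$. Applying the compact embedding of $H^{\sigma,p}(\R^n)$ into $L^p_{\loc}(\R^n)$ from~\eqref{embeddingBessel} on a ball $B\supset\overline{\Omega}$, a subsequence converges in $L^p(B)$, hence in $L^p(\R^n)$ by the support property, so $H^{\rho,p}_0(\Omega)$ embeds compactly into $L^p(\R^n)$. For the Poincar\'e inequality I argue by contradiction: if no constant works there are $u_k\in H^{\rho,p}_0(\Omega)$ with $\norm{u_k}_{L^p(\R^n)}=1$ and $\norm{D_\rho u_k}_{L^p(\R^n;\R^n)}\to 0$, so $(u_k)$ is bounded in $H^{\rho,p}_0(\Omega)$; by the compact embedding a subsequence converges in $L^p(\R^n)$ to some $u$ with $\norm{u}_{L^p(\R^n)}=1$, and since the weak nonlocal gradient is closed under $L^p$-convergence (immediate from its distributional definition), $D_\rho u=0$ while $u$ is supported in $\overline{\Omega}$. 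Then $v:=Q_\rho*u$ has compact support, lies in $L^p(\R^n)$, and satisfies $\nabla v=D_\rho u=0$, so $v$ is constant, hence $v\equiv0$; therefore $\widehat{Q}_\rho\,\widehat{u}=\widehat{v}=0$, and since $\widehat{Q}_\rho>0$ everywhere (Lemma~\ref{lem:bms}) we conclude $u=0$, contradicting $\norm{u}_{L^p(\R^n)}=1$.

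The main obstacle is the multiplier estimate in the second step, and within it the boundedness of $m$: this is precisely where the value of $\sigma$ from~\ref{itm:lower} matters, since the lower bound on $\widehat{Q}_\rho$ it produces must match the order $\abs{\xi}^{\sigma-1}$ of the Bessel symbol \emph{exactly}; the derivative estimates~\eqref{eq:bms2} are then what upgrade this pointwise information to the full Mihlin--H\"ormander condition. Everything else is soft once this multiplier bound is available.
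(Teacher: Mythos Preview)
The paper does not prove this lemma itself but quotes it from \cite[Theorem~4.11]{BMS24}. Your proof is correct and, in fact, aligns with the Fourier-multiplier machinery the paper employs elsewhere: the multiplier $m=\langle\cdot\rangle^{\sigma-1}/\widehat{Q}_\rho$ you introduce is exactly what underlies the stronger estimate~\eqref{estimateHsigmap} (cited there from \cite[Theorem~7.2]{BMS24}), and the same Mihlin--H\"ormander strategy based on Lemma~\ref{lem:bms} drives Theorem~\ref{th:comparison} and Proposition~\ref{prop:poincare2}. Deducing the Poincar\'e inequality from the compact embedding by contradiction, rather than via the direct nonlocal fundamental-theorem-of-calculus approach alluded to in the introduction, is a standard alternative.

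Two small points are worth making explicit in your contradiction step. First, the limit $u$ actually lies in $H^{\rho,p}_0(\Omega)$: since $u_k\to u$ in $L^p(\R^n)$ and $D_\rho u_k\to 0=D_\rho u$ in $L^p(\R^n;\R^n)$, the sequence converges in the $H^{\rho,p}$-norm, and $H^{\rho,p}_0(\Omega)$ is closed by definition; hence your translation identity $\nabla(Q_\rho*u)=D_\rho u$ applies without needing the Lipschitz characterization of the space. Second, the implication $\widehat{Q}_\rho\,\widehat{u}=0\Rightarrow\widehat{u}=0$ is cleanest once you note that $u$, being compactly supported and in $L^p(\R^n)$, lies in $L^1(\R^n)$, so that $\widehat{u}$ is a continuous function and the division by the strictly positive $\widehat{Q}_\rho$ is genuinely pointwise.
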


In fact, comparing $\rho$ as in the previous lemma with the kernel from Example~\ref{ex:kernels}\,a) with $s=\sigma$ leads to a stronger estimate that will be utilized several times in what follows. Precisely, by using \ref{itm:lower} and  \cite[Theorem~7.2]{BMS24}, we find that there is a $C>0$ such that 
\begin{align}\label{estimateHsigmap}
\norm{u}_{H^{\sigma,p}(\Rn)} \leq C \norm{D_\rho u}_{L^p(\Rn;\Rn)} \quad \text{for all $u \in H^{\rho,p}_0(\O)$}.
\end{align}

\begin{remark}[Relaxed assumptions on $\rho$]\label{rem:relaxed}
Note that according to~\cite[Proposition~3.10]{BMS24}, there is an equivalence between the function spaces and Poincar\'{e} inequalities associated to kernels that agree around the origin. Hence,~\eqref{Poincare} and~\eqref{estimateHsigmap} hold even when the smoothness in \ref{itm:h1} only holds locally, or when the assumption of $\rho$ having compact support is dropped. For example, one could replace the function $w$ in Example~\ref{ex:kernels}\,a) by an indicator function $\mathbbm{1}_{B_\delta(0)}$ with $\delta>0$ or by an exponentially decaying function $e^{-\alpha|\, \cdot\,|}$ with $\alpha>0$, which leads to the truncated and tempered fractional kernel of~\cite[Examples~2 and~3]{EGM22}, respectively.
\end{remark}

\subsection[Existence theory for nonlocal variational problems]{Existence theory for nonlocal variational problems}\label{sec:existence}

Let us address next the solvability of vectorial variational problems involving the nonlocal gradients as introduced in the previous section. Besides being of general interest, the existence statement of Theorem~\ref{th:existence} is needed below to conclude the convergence of minimizers for the variational problems with varying horizon in Sections~\ref{se: localization} and \ref{se: convergence in infinity}.
We remark that the results presented here are new in this generality, but can be derived by following closely the techniques of \cite{CuKrSc23}, where the direct method in the calculus of variations is applied to the special case of functionals depending on finite-horizon fractional gradients. The adaptation of the proofs is straightforward and left to the reader.

Throughout this section, we assume that $p \in (1,\infty)$, $\O \subset \Rn$ is a bounded Lipschitz domain, and the kernel $\rho$ satisfies \ref{itm:h0}-\ref{itm:upper} and has compact support. The following result, which allows us to translate the nonlocal gradients into classical gradients, can be proven by extending \eqref{eq:translationsmooth} via density as in \cite[Theorem~2\,$(i)$]{CuKrSc23}.
\color{black}
\begin{lemma}[From nonlocal to local gradients]\label{le:translation}
The linear map $\Qcal_\rho:H^{\rho,p}(\R^n) \to W^{1,p}(\R^n),$  $u \mapsto Q_\rho * u$ is bounded and it holds for all $u \in H^{\rho,p}(\R^n)$ that
\[
D_\rho u= \nabla (\Qcal_\rho u).
\]
\end{lemma}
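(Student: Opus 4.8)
The plan is to upgrade the smooth identity~\eqref{eq:translationsmooth} to all of $H^{\rho,p}(\R^n)$ by a density argument, exploiting that $H^{\rho,p}(\R^n)$ is the closure of $C_c^\infty(\R^n)$ with respect to $\norm{\cdot}_{H^{\rho,p}(\R^n)}$, cf.~\cite[Theorem~3.9\,(i)]{BMS24}. As a preliminary remark I would record that, since $\rho$ has compact support, $Q_\rho$ is supported in the same ball, and being in addition locally integrable it follows that $Q_\rho \in L^1(\R^n)$ (equivalently, $\rho \in L^1(\R^n)$ by~\ref{itm:h0}, whence $Q_\rho \in L^1(\R^n)$ as recalled below~\eqref{eq:translationsmooth}). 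This $L^1$-membership is the only quantitative input on $Q_\rho$ that the argument needs, and in particular it makes $Q_\rho * u$ well-defined in $L^p(\R^n)$ for every $u \in L^p(\R^n)$ by Young's convolution inequality.

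For $\phi \in C_c^\infty(\R^n)$, identity~\eqref{eq:translationsmooth} gives $Q_\rho*\phi\in W^{1,p}(\R^n)$ with $\nabla(Q_\rho*\phi)=D_\rho\phi$, and Young's inequality yields $\norm{Q_\rho*\phi}_{L^p(\R^n)}\le\norm{Q_\rho}_{L^1(\R^n)}\norm{\phi}_{L^p(\R^n)}$, so that
\[
\norm{Q_\rho*\phi}_{W^{1,p}(\R^n)}=\norm{Q_\rho*\phi}_{L^p(\R^n)}+\norm{D_\rho\phi}_{L^p(\R^n;\R^n)}\le C\,\norm{\phi}_{H^{\rho,p}(\R^n)}
\]
with $C=\max\{\norm{Q_\rho}_{L^1(\R^n)},1\}$. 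Given $u\in H^{\rho,p}(\R^n)$, I would pick $\phi_k\in C_c^\infty(\R^n)$ with $\phi_k\to u$ in $H^{\rho,p}(\R^n)$; applying the displayed bound to the differences $\phi_k-\phi_j$ shows that $(Q_\rho*\phi_k)_k$ is Cauchy, hence convergent, in $W^{1,p}(\R^n)$. Since simultaneously $Q_\rho*\phi_k\to Q_\rho*u$ in $L^p(\R^n)$ by Young's inequality, the $W^{1,p}$-limit must coincide with $Q_\rho*u$; in particular $Q_\rho*u\in W^{1,p}(\R^n)$, the map $\Qcal_\rho$ is well-defined and linear (independence of the approximating sequence being standard), and passing to the limit in the displayed estimate gives $\norm{\Qcal_\rho u}_{W^{1,p}(\R^n)}\le C\,\norm{u}_{H^{\rho,p}(\R^n)}$, i.e.\ boundedness.

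It remains to identify the gradient: on the one hand $\nabla(Q_\rho*\phi_k)=D_\rho\phi_k\to D_\rho u$ in $L^p(\R^n;\R^n)$ by the very definition of convergence in $H^{\rho,p}(\R^n)$ (the weak nonlocal gradient being stable under such limits, which one sees by passing to the limit in the integration-by-parts relation~\eqref{eq:integration by parts}), and on the other hand $\nabla(Q_\rho*\phi_k)\to\nabla(\Qcal_\rho u)$ in $L^p(\R^n;\R^n)$ because $Q_\rho*\phi_k\to\Qcal_\rho u$ in $W^{1,p}(\R^n)$; uniqueness of $L^p$-limits then forces $D_\rho u=\nabla(\Qcal_\rho u)$. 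The only point requiring any care is the $L^1$-membership of $Q_\rho$ (guaranteed here by its compact support) together with the bookkeeping between the $L^p$- and $H^{\rho,p}$-convergences when passing to the limit; with those in hand the proof is a routine density continuation of~\eqref{eq:translationsmooth}, as in~\cite[Theorem~2\,$(i)$]{CuKrSc23}.
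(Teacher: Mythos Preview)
Your proof is correct and follows precisely the approach indicated by the paper, which merely states that the result ``can be proven by extending \eqref{eq:translationsmooth} via density as in \cite[Theorem~2\,$(i)$]{CuKrSc23}.'' Your density argument---using $Q_\rho\in L^1(\R^n)$ (from the compact support assumed in this section), Young's inequality, and the characterization of $H^{\rho,p}(\R^n)$ as the closure of $C_c^\infty(\R^n)$---is exactly that extension, carried out in full detail.
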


Another ingredient is the strong convergence of nonlocal gradients in the complement of $\O$, which follows as in \cite[Lemma~3]{CuKrSc23} by utilizing the compact embedding of $H^{\rho,p}_0(\O)$ into $L^p(\Rn)$ (see Lemma~\ref{lem:compactness}) and the Leibniz rule in \cite[Lemma~3.8]{BMS24}. 
\color{black}
\begin{lemma}[Strong convergence in the complement]
Let $(u_j)_j \subset H^{\rho,p}_0(\O)$ be a sequence that converges weakly to $u$ in $H^{\rho,p}(\Rn)$. Then, for any $\eta>0$ it holds that
\[
D_\rho u_j \to D_\rho u \quad \text{in $L^p((\O_\eta)^c;\Rn)$ as $j\to \infty$},
\]
recalling the definition $\O_\eta=\O+B_\eta(0)$, see~\eqref{Omegadelta}.
\end{lemma}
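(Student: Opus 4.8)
The plan is to exploit the translation identity from Lemma~\ref{le:translation}, which expresses $D_\rho u = \nabla(\Qcal_\rho u)$, and to combine it with the classical Leibniz (product) rule for nonlocal gradients from \cite[Lemma~3.8]{BMS24}, in the spirit of \cite[Lemma~3]{CuKrSc23}. The starting observation is that since $(u_j)_j \subset H^{\rho,p}_0(\O)$, each $u_j$ vanishes a.e.\ in $\O^c$; we want to show $D_\rho u_j \to D_\rho u$ strongly in $L^p((\O_\eta)^c;\Rn)$. The key point is that, far away from $\O$ (by a margin $\eta$), the nonlocal gradient $D_\rho u_j$ only sees the values of $u_j$ inside $\O$ up to the convolution tail, and the difference $D_\rho u_j - D_\rho u = D_\rho(u_j - u)$ can be localized using a cut-off function supported away from $\O$ together with the Leibniz rule, turning the remaining terms into expressions involving $u_j - u$ itself, which converges strongly in $L^p$ on bounded sets by the compact embedding in Lemma~\ref{lem:compactness}.

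The steps I would carry out are the following. First, reduce to $v_j := u_j - u \weakto 0$ in $H^{\rho,p}(\Rn)$, noting $v_j \in H^{\rho,p}_0(\O)$ and (by Lemma~\ref{lem:compactness}) $v_j \to 0$ strongly in $L^p(\Rn)$, hence in particular $v_j \to 0$ in $L^p(K)$ for every bounded $K$. Second, fix a smooth cut-off $\chi$ with $\chi \equiv 1$ on $(\O_\eta)^c$ and $\chi \equiv 0$ on $\O_{\eta/2}$ (say), so that on $(\O_\eta)^c$ we have $D_\rho v_j = D_\rho(\chi v_j)$; this is where one must be careful, since $D_\rho$ is nonlocal, so $\chi v_j$ does not simply equal $v_j$ near the relevant region unless we account for interactions. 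The cleaner route is to apply the Leibniz rule $D_\rho(\chi v_j) = \chi D_\rho v_j + (\text{commutator term involving } v_j \text{ and } \chi)$ where the commutator is an integral operator whose kernel decays with the support of $\rho$ and involves differences of $\chi$; since $\chi$ is fixed and smooth (Lipschitz), this commutator term is controlled in $L^p$ by $\|v_j\|_{L^p(\mathrm{supp}\,\rho\text{-neighborhood of }\O)}$, which goes to $0$. Third, handle $\chi D_\rho v_j$ on $(\O_\eta)^c$: here I would instead argue directly that $D_\rho v_j$ restricted to $(\O_\eta)^c$ is, up to the convolution structure $D_\rho v_j = Q_\rho * \nabla v_j = \nabla(Q_\rho * v_j)$, given by an integral $\int_\O (\cdots) v_j(y)\,dy$ over $y \in \O$ only (since $v_j = 0$ outside $\O$), with a kernel that is bounded uniformly for $x \in (\O_\eta)^c$ because $|x - y| \geq \eta$ there and $\rho$ (or rather $Q_\rho$, $\nabla Q_\rho$) is bounded away from the origin; then $\|D_\rho v_j\|_{L^p((\O_\eta)^c)} \lesssim \|v_j\|_{L^1(\O)} \lesssim \|v_j\|_{L^p(\O)} \to 0$ by Hölder and the compact embedding. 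This last observation actually makes the cut-off argument unnecessary, but it is worth keeping the Leibniz-rule version in mind as a backup.

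The main obstacle I anticipate is making precise the claim that $D_\rho v_j$ on $(\O_\eta)^c$ can be written as an integral operator applied to $v_j|_\O$ with a kernel that is bounded (and has suitable integrability in $x$) uniformly over $x \in (\O_\eta)^c$: one must confirm that $\nabla Q_\rho$ (equivalently, the kernel $\tfrac{x-y}{|x-y|^2}\overline{\rho}(|x-y|)$ appearing implicitly) is integrable in $x$ over $(\O_\eta)^c$ for fixed $y \in \O$ and that this $L^p_x$-bound is uniform in $y$, which uses the decay/integrability of $\rho$ from \ref{itm:h0} together with $|x-y| \geq \eta$. Once this is in place, Minkowski's integral inequality gives $\|D_\rho v_j\|_{L^p((\O_\eta)^c)} \leq C(\eta,\O,\rho)\,\|v_j\|_{L^1(\O)}$, and since $\O$ is bounded, $\|v_j\|_{L^1(\O)} \leq |\O|^{1-1/p}\|v_j\|_{L^p(\O)} \to 0$ by Lemma~\ref{lem:compactness}. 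This completes the proof; the only genuinely technical point is the uniform kernel bound, and the cleanest presentation may indeed be to quote \cite[Lemma~3]{CuKrSc23} for the finite-horizon fractional case and note that the argument there uses only \ref{itm:h0}, the translation identity of Lemma~\ref{le:translation}, the Leibniz rule of \cite[Lemma~3.8]{BMS24}, and the compact embedding of Lemma~\ref{lem:compactness}, all of which are available here.
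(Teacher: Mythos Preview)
Your proposal is correct. The paper does not write out a proof here but simply cites \cite[Lemma~3]{CuKrSc23}, indicating that the argument relies on the compact embedding of Lemma~\ref{lem:compactness} together with the Leibniz rule of \cite[Lemma~3.8]{BMS24}; this is exactly your ``backup'' cut-off/Leibniz approach.

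Your preferred direct route---writing $D_\rho v_j$ on $(\O_\eta)^c$ as a convolution of $v_j$ with the bounded, compactly supported kernel $z\mapsto -z\rho(z)/|z|^2$ restricted to $|z|\ge \eta$, and then bounding by $\|v_j\|_{L^1(\O)}$ via Minkowski---is also correct and is in fact the method the paper itself uses later, in the second part of Lemma~\ref{le:compactness2} (see~\eqref{eq:Drhod}). The two approaches buy essentially the same thing here: the Leibniz-rule version is what one would naturally cite given the existing literature, while the direct kernel representation is slightly more elementary (it bypasses the commutator estimate) and extends more transparently to the varying-horizon setting, which is why the paper switches to it in Section~\ref{se: convergence in infinity}. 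Either presentation is fine.
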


With these two technical tools and the Poincar\'{e} inequality from Lemma~\ref{lem:compactness} at hand, one can argue as in the sufficiency part of \cite[Theorem~5]{CuKrSc23} and \cite[Corollary~2]{CuKrSc23} to obtain the existence of minimizers for vectorial variational problems with quasiconvex integrands. \color{red} 
\color{black}
\begin{theorem}[Existence of minimizers]\label{th:existence}
Let $\d>0$ be such that $\supp \rho = \overline{B_\d(0)}$ and let $f:\O_\d \times \Rmn \to \R$ be a Carath\'{e}odory integrand such that 
\[
c\abs{A}^p-C\leq f(x,A) \leq C(1+\abs{A}^p) \quad \text{for a.e.~$x \in \Omega_\d$ and all $A \in \Rmn$}.
\]
If $A \mapsto f(x,A)$ is quasiconvex for a.e.~$x \in \O$, then the functional 
\begin{align}\label{Fcal}
\Fcal:H^{\rho,p}_0(\O;\R^m) \to \R, \quad \Fcal(u):= \int_{\O_\d} f(x,D_\rho u)\,dx
\end{align}
admits a minimizer.
\end{theorem}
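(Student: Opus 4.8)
The plan is to apply the direct method of the calculus of variations, relying crucially on the translation $\Qcal_\rho:H^{\rho,p}(\R^n)\to W^{1,p}(\R^n)$ from Lemma~\ref{le:translation} to transfer the problem into the classical Sobolev setting, where quasiconvexity is known to yield weak lower semicontinuity. First I would take a minimizing sequence $(u_j)_j\subset H^{\rho,p}_0(\O;\R^m)$, that is, $\Fcal(u_j)\to \inf_{H^{\rho,p}_0(\O;\R^m)}\Fcal$; the infimum is finite because the upper $p$-growth bound gives $\Fcal(u)\leq C(|\O_\d| + \norm{D_\rho u}_{L^p(\O_\d;\Rmn)}^p)<\infty$ for any fixed admissible $u$ (e.g.~$u\equiv 0$), and it is bounded below by $-C|\O_\d|$. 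The coercivity bound $f(x,A)\geq c|A|^p-C$ then yields $\sup_j \norm{D_\rho u_j}_{L^p(\R^n;\Rmn)}<\infty$ (note $D_\rho u_j = 0$ outside $\O_\d$ is not literally true, but the integral is over $\O_\d$ and $\norm{D_\rho u_j}_{L^p(\O_\d)}$ is controlled; combined with the Poincar\'e inequality \eqref{Poincare}, which controls $\norm{u_j}_{L^p(\R^n)}$ by $\norm{D_\rho u_j}_{L^p(\R^n;\Rn)}$ componentwise, one must be slightly careful that the norm over all of $\R^n$ is finite — this follows since for $u\in H^{\rho,p}_0(\O;\R^m)$ one has $D_\rho u\in L^p(\R^n;\Rmn)$ by definition of the space). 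Hence $(u_j)_j$ is bounded in $H^{\rho,p}_0(\O;\R^m)$.

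\textbf{Compactness and passage to the limit.} By reflexivity of $H^{\rho,p}_0(\O;\R^m)$ (inherited from $L^p\times L^p$), a subsequence (not relabeled) satisfies $u_j\weakc u$ in $H^{\rho,p}(\R^n;\R^m)$ for some $u$; since $H^{\rho,p}_0(\O;\R^m)$ is weakly closed as a closed subspace, $u\in H^{\rho,p}_0(\O;\R^m)$. Now apply $\Qcal_\rho$: by boundedness and linearity of $\Qcal_\rho$ (Lemma~\ref{le:translation}), $\Qcal_\rho u_j \weakc \Qcal_\rho u$ in $W^{1,p}(\R^n;\R^m)$, and $\nabla(\Qcal_\rho u_j)=D_\rho u_j$, $\nabla(\Qcal_\rho u)=D_\rho u$. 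Restricting to the bounded set $\O_\d$, we have $\Qcal_\rho u_j \weakc \Qcal_\rho u$ in $W^{1,p}(\O_\d;\R^m)$. Then the classical Morrey-type lower semicontinuity theorem for quasiconvex integrands with $p$-growth (e.g.~Acerbi--Fusco, or \cite{Dacorogna}; this is exactly the ingredient invoked in \cite{CuKrSc23}) applied on $\O_\d$ gives
\[
\int_{\O_\d} f(x,\nabla(\Qcal_\rho u))\,dx \leq \liminf_{j\to\infty}\int_{\O_\d} f(x,\nabla(\Qcal_\rho u_j))\,dx,
\]
that is, $\Fcal(u)\leq \liminf_{j\to\infty}\Fcal(u_j) = \inf \Fcal$. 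Since $u$ is admissible, $u$ is a minimizer.

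\textbf{Main obstacle.} The substantive point — and the reason the statement is not entirely immediate from the classical theory — is that weak convergence $u_j\weakc u$ in the nonlocal space must be converted into weak convergence of the \emph{classical} gradients $\nabla(\Qcal_\rho u_j)$ on the bounded domain $\O_\d$, with the limit correctly identified, which is precisely what the boundedness of $\Qcal_\rho$ and the identity $D_\rho u=\nabla(\Qcal_\rho u)$ in Lemma~\ref{le:translation} deliver; without this translation one cannot directly invoke quasiconvexity lower semicontinuity, since quasiconvexity is tailored to genuine gradients. A minor technical wrinkle is that $f$ is only assumed quasiconvex on $\O$ (not all of $\O_\d$), so lower semicontinuity on the annular region $\O_\d\setminus\overline{\O}$ needs a separate argument: there one uses the strong convergence $D_\rho u_j\to D_\rho u$ in $L^p((\O_\eta)^c;\Rn)$ for $\eta<\d$ (the Strong-convergence-in-the-complement lemma), which upgrades the integral over $\O_\d\setminus\O_\eta$ to a convergent one by continuity of $f$ and the $p$-growth bound via a Vitali/dominated convergence argument, while the thin layer $\O_\eta\setminus\O$ can be absorbed by sending $\eta\downarrow\d$ after extracting a further diagonal subsequence. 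This is exactly the splitting carried out in \cite[Theorem~5]{CuKrSc23}, and the adaptation is routine.
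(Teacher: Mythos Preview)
Your approach is correct and matches the paper's: the paper does not give a detailed proof but explicitly says to argue as in \cite[Theorem~5, Corollary~2]{CuKrSc23} using exactly the three ingredients you invoke --- the Poincar\'e inequality (Lemma~\ref{lem:compactness}) for coercivity, the translation $\Qcal_\rho$ (Lemma~\ref{le:translation}) to reduce to classical quasiconvex lower semicontinuity on $\O$, and the strong convergence in the complement to handle $\O_\d\setminus\O$ where quasiconvexity is not assumed. Two minor slips: first, $D_\rho u_j$ \emph{does} vanish outside $\O_\d$ (the paper notes this right after the theorem), so $\norm{D_\rho u_j}_{L^p(\O_\d)}=\norm{D_\rho u_j}_{L^p(\R^n)}$ and your caveat is unnecessary; second, in the last line you wrote ``sending $\eta\downarrow\d$'' where you clearly mean $\eta\downarrow 0$.
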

Note that taking $\d>0$ such that $\supp \rho = \overline{B_\d(0)}$ ensures that $D_\rho u$ is zero in $\O_\d^c$ for all $u \in H^{\rho,p}_0(\O)$. Hence, the functional $\Fcal$ in~\eqref{Fcal} defined as an integral over the bounded set $\Omega_\d$ captures all the non-trivial parts of $D_\rho$. 
 
Quasiconvexity, which is well-known to characterize the weak lower semicontinuity of integral functionals in the classical case \cite{Morrey, Dac08}, is indeed the natural convexity notion also in the context of variational integrals depending on nonlocal gradients. This observation can be seen as a generalization of~\cite[Theorem~1.1]{KreisbeckSchonberger} and~\cite[Theorem~5]{CuKrSc23} and relies on Lemma~\ref{le:translation} and the following inverse translation operator.
\begin{lemma}[From local to nonlocal gradients]
There is a bounded linear operator $\Pcal_\rho :W^{1,p}(\R^n) \to H^{\rho,p}(\R^n)$ such that $\Pcal_\rho=(\Qcal_\rho)^{-1}$. In particular, for all $v \in W^{1,p}(\R^n)$ we have
\[
 \nabla v= D_\rho (\Pcal_\rho v).
\]
\end{lemma}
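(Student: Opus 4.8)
The plan is to realise $\Pcal_\rho$ as the Fourier multiplier operator with symbol $1/\widehat{Q}_\rho$. Since $\Qcal_\rho = Q_\rho * \,\cdot\,$ acts on the Fourier side by multiplication with $\widehat{Q}_\rho$ (cf.~\eqref{eq:drhofourier} and its extension via $Q_\rho\in L^1(\R^n)$), this is the obvious candidate for its inverse. By Lemma~\ref{lem:bms} the function $\widehat{Q}_\rho$ is smooth and strictly positive on $\R^n$, and combining \eqref{eq:bms1} with \ref{itm:lower} — which forces $\overline{\rho}(r)\gtrsim r^{-(n+\sigma-1)}$ for small $r$, hence $\widehat{Q}_\rho(\xi)\gtrsim |\xi|^{\sigma-1}$ for large $|\xi|$ — together with the smoothness near the origin one sees that $1/\widehat{Q}_\rho$ is smooth on $\R^n$ with all derivatives of at most polynomial growth. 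In particular $\hat v/\widehat{Q}_\rho$ is a well-defined tempered distribution for every $v\in W^{1,p}(\R^n)\subset\Scal'(\R^n)$, and I would set $\Pcal_\rho v:=\F^{-1}\bigl[\hat v/\widehat{Q}_\rho\bigr]$.

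The heart of the argument is to show that this $\Pcal_\rho$ is bounded from $W^{1,p}(\R^n)$ into $L^p(\R^n)$. Using the classical identification $W^{1,p}(\R^n)=H^{1,p}(\R^n)$ (equivalent norms), I would write $v=(1-\Delta)^{-1/2}g$ with $g\in L^p(\R^n)$, $\|g\|_{L^p(\R^n)}\lesssim\|v\|_{W^{1,p}(\R^n)}$, so that $\widehat{\Pcal_\rho v}=m\,\hat g$ with $m(\xi)=\bigl((1+4\pi^2|\xi|^2)^{1/2}\widehat{Q}_\rho(\xi)\bigr)^{-1}$. The lower bound on $\widehat{Q}_\rho$ above (together with its positivity and smoothness near $0$) gives $m\in L^\infty(\R^n)$; applying \eqref{eq:bms2} to $\widehat{Q}_\rho$, combining it with the elementary derivative bounds for $(1+4\pi^2|\cdot|^2)^{-1/2}$ and invoking the Leibniz rule yields $|\partial^\alpha m(\xi)|\le C_\alpha|\xi|^{-|\alpha|}m(\xi)\le C_\alpha\|m\|_{L^\infty(\R^n)}|\xi|^{-|\alpha|}$ for $\xi\neq 0$ and all $\alpha\in\N_0^n$. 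Hence $m$ satisfies the Mihlin--H\"ormander condition, so the associated multiplier operator is bounded on $L^p(\R^n)$ for $p\in(1,\infty)$ (see e.g.~\cite{Gra14a}), and $\|\Pcal_\rho v\|_{L^p(\R^n)}\le C\|v\|_{W^{1,p}(\R^n)}$. (Alternatively, one can split $1/\widehat{Q}_\rho$ with a cut-off near the origin and treat the two pieces as multipliers acting on $v$ and on $\nabla v$, avoiding the identification $W^{1,p}=H^{1,p}$.)

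It then remains to identify $\Pcal_\rho v$ as an element of $H^{\rho,p}(\R^n)$ and to establish the inverse relations. On the Fourier side $\widehat{Q_\rho*\Pcal_\rho v}=\widehat{Q}_\rho\,(\hat v/\widehat{Q}_\rho)=\hat v$, so $\Qcal_\rho\Pcal_\rho v=v$; and testing against $\psi\in C_c^\infty(\R^n;\R^n)$, using $\diver_\rho\psi=Q_\rho*\diver\psi$ (which follows componentwise from \eqref{eq:translationsmooth}) and the symmetry of $Q_\rho$, one computes $\int_{\R^n}\Pcal_\rho v\,\diver_\rho\psi\,dx=\int_{\R^n}(Q_\rho*\Pcal_\rho v)\,\diver\psi\,dx=\int_{\R^n}v\,\diver\psi\,dx=-\int_{\R^n}\nabla v\cdot\psi\,dx$, so $\Pcal_\rho v$ has weak nonlocal gradient $D_\rho(\Pcal_\rho v)=\nabla v\in L^p(\R^n;\R^n)$; hence $\Pcal_\rho v\in H^{\rho,p}(\R^n)$ with $\|\Pcal_\rho v\|_{H^{\rho,p}(\R^n)}\le C\|v\|_{W^{1,p}(\R^n)}$, and $\Pcal_\rho$ is plainly linear. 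Finally $\Qcal_\rho$ is injective (if $Q_\rho*u=0$ then $\widehat{Q}_\rho\hat u=0$, and $\widehat{Q}_\rho>0$ forces $\hat u=0$), so from $\Qcal_\rho\Pcal_\rho=\id$ on $W^{1,p}(\R^n)$ one gets $\Qcal_\rho(\Pcal_\rho\Qcal_\rho u-u)=0$, i.e.~$\Pcal_\rho\Qcal_\rho=\id$ on $H^{\rho,p}(\R^n)$, whence $\Pcal_\rho=(\Qcal_\rho)^{-1}$; the identity $\nabla v=D_\rho(\Pcal_\rho v)$ is part of the computation above and also follows from Lemma~\ref{le:translation} applied to $u=\Pcal_\rho v$ once $\Qcal_\rho\Pcal_\rho v=v$ is known. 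I expect the only genuine obstacle to be the uniform lower bound $\widehat{Q}_\rho(\xi)\gtrsim|\xi|^{\sigma-1}$ underpinning the Mihlin--H\"ormander estimate; everything else is routine bookkeeping with tempered distributions.
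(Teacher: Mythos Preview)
Your proof is correct and follows essentially the same route as the paper: realise $\Pcal_\rho$ as the Fourier multiplier with symbol $1/\widehat{Q}_\rho$, and establish $W^{1,p}\to L^p$ boundedness via the Mihlin--H\"ormander theorem, using the lower bound $\widehat{Q}_\rho(\xi)\gtrsim|\xi|^{\sigma-1}$ coming from \eqref{eq:bms1} and \ref{itm:lower}. The only noteworthy difference is that the paper works with the multiplier $\langle\xi\rangle^{\sigma-1}/\widehat{Q}_\rho(\xi)$ rather than your $\langle\xi\rangle^{-1}/\widehat{Q}_\rho(\xi)$, which yields the slightly sharper intermediate statement that $\Pcal_\rho$ is bounded from $H^{1-\sigma,p}(\R^n)$ into $L^p(\R^n)$; your choice suffices for the lemma as stated, and your direct verification of $D_\rho(\Pcal_\rho v)=\nabla v$ via the weak definition (together with the injectivity argument for $\Qcal_\rho$) is a clean alternative to the paper's density-from-$\Scal(\R^n)$ argument.
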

\begin{proof}
We define the operator
\[
\Pcal_\rho:\Scal(\R^n) \to \Scal(\R^n), \quad \Pcal_\rho v :=\left(v/\widehat{Q}_\rho\right)^{\vee},
\]
which is well-defined given that $1/\widehat{Q}_\rho$ is smooth and polynomially bounded by Lemma~\ref{lem:bms} and \ref{itm:upper}. It is also clear that $\Pcal_\rho=(\Qcal_\rho)^{-1}$ and $D_\rho \circ\Pcal_\rho = \nabla$ on the space $\Scal(\R^n)$, so it is sufficient to prove that $\Pcal_\rho$ extends to a bounded operator from $W^{1,p}(\R^n)$ to $L^p(\R^n)$. By Lemma~\ref{lem:bms}, \ref{itm:lower} and the Mihlin-H\"ormander theorem (cf.~e.g.~\cite[Theorem~6.2.7]{Gra14a}), it can be verified that $\abss{\cdot}^{\sigma-1}/\widehat{Q}_\rho$ is an $L^p$-multiplier, where $\langle \xi \rangle:=\sqrt{1+|\xi|^2}$ for $\xi\in \R^n$. Hence, arguing as in \cite[Section~2.3]{KrS24}, we find that $\Pcal_\rho$ extends to a bounded operator from $H^{1-\sigma,p}(\R^n)$ to $L^p(\R^n)$ and, in particular, it is also bounded from $W^{1,p}(\R^n)$ to $L^p(\R^n)$.
\end{proof}
The following can now be proven as in \cite[Remark~8]{CuKrSc23}, given that we have both translation operators $\Qcal_\rho,\Pcal_\rho$, the Leibniz rule from \cite[Lemma~3.8]{BMS24}, and the compact embedding of $H^{\rho,p}_0(\O)$ into $L^{\infty}(\R^n)$ for $p>n/\sigma$ (cf.~\cite[Theorem~6.5]{BMS24}).
\begin{corollary}[Nonlocal representation of quasiconvexity] 
Let $\d>0$ be such that $\supp \rho = \overline{B_\d(0)}$. A continuous function $h:\Rmn \to \R$ is quasiconvex if and only if
\[
f(A) \leq \frac{1}{\abs{\O_\d}}\int_{\O_\d} f(A+D_\rho u)\,dx \quad \text{for all $A \in \Rmn$ and  $u \in H^{\rho,\infty}_0(\O;\R^m)$},
\]
with $H^{\rho,\infty}_0(\O;\R^m):=\{u \in L^{\infty}(\R^n;\R^m)\,:\, D_\rho u \in L^{\infty}(\R^n;\Rmn), \ u=0 \ \text{a.e.~in $\O^c$}\}$.
\end{corollary}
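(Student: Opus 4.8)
\emph{Proof strategy.} The plan is to establish the two implications separately, closely following \cite[Remark~8]{CuKrSc23}. The backbone of the argument is the pair of translation operators $\Qcal_\rho$ and $\Pcal_\rho$ from the preceding lemmas, which transfer statements about $D_\rho$ to statements about $\nabla$ and back, together with the nonlocal Leibniz rule of \cite[Lemma~3.8]{BMS24} and, as an auxiliary tool, the embedding $H^{\rho,p}_0(\O)\embed L^\infty(\R^n)$ for $p>n/\sigma$ used to keep truncated test functions bounded.

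For "$h$ quasiconvex $\Rightarrow$ the inequality" I would take a competitor $u\in H^{\rho,\infty}_0(\O;\R^m)$ and pass to the classical side. Since $\supp\rho=\overline{B_\d(0)}$, the potential $Q_\rho$ is supported in $\overline{B_\d(0)}$, so $v:=\Qcal_\rho u=Q_\rho*u$ belongs to $W^{1,\infty}(\R^n;\R^m)$ --- it is bounded because $u\in L^\infty$ and $Q_\rho\in L^1$, and $\nabla v=D_\rho u\in L^\infty$ by Lemma~\ref{le:translation} --- and it vanishes outside $\overline{\O}+\overline{B_\d(0)}\subseteq\overline{\O_\d}$. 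After a standard approximation making $\supp v$ a compact subset of $\O_\d$, testing the classical quasiconvexity inequality for $h$ over the bounded open set $\O_\d$ against $v$ gives exactly $h(A)\le\tfrac{1}{\abs{\O_\d}}\int_{\O_\d}h(A+\nabla v)\,dx=\tfrac{1}{\abs{\O_\d}}\int_{\O_\d}h(A+D_\rho u)\,dx$.

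For the converse I would first use the usual reductions for quasiconvexity (scaling, translation and a Vitali covering, see~\cite{Dac08}): it suffices to verify $h(A)\le\tfrac{1}{\abs{B}}\int_{B}h(A+\nabla\varphi)\,dx$ for one fixed ball $B=B_r(x_0)$ with $\overline{B}\subset\O$ and every $\varphi\in C_c^\infty(B;\R^m)$ (restricting to smooth compactly supported test functions is harmless by mollification and dominated convergence, using only continuity of $h$). Given such a $\varphi$, I would lift it by setting $u:=\Pcal_\rho\varphi$, so that $D_\rho u=\nabla\varphi$; here $u$ is rapidly decaying, since its Fourier transform equals $\widehat\varphi/\widehat{Q_\rho}$, which is Schwartz because $1/\widehat{Q_\rho}$ is smooth with all derivatives polynomially bounded by Lemma~\ref{lem:bms} and~\ref{itm:upper}. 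Choosing a cut-off $\eta\in C_c^\infty(\O)$ with $\eta\equiv1$ on a neighbourhood of $\overline{B}$ and setting $w:=\eta u\in C_c^\infty(\O;\R^m)\subseteq H^{\rho,\infty}_0(\O;\R^m)$, the Leibniz rule of \cite[Lemma~3.8]{BMS24} yields $D_\rho w=\eta\,D_\rho u+K(\eta,u)=\nabla\varphi+K(\eta,u)$, with a commutator $K(\eta,u)$ that is supported only within distance $\d$ of the transition layer of $\eta$ and satisfies $\norm{K(\eta,u)}_{L^\infty}\le\norm{u}_{L^\infty}\,\Lip(\eta)\,\norm{\rho}_{L^1}$ (note $\rho\in L^1(\R^n)$ because $\rho$ has compact support and satisfies~\ref{itm:h0}). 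Feeding the admissible competitor $w$ into the assumed inequality then gives $h(A)\le\tfrac{1}{\abs{\O_\d}}\int_{\O_\d}h(A+\nabla\varphi+K(\eta,u))\,dx$.

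The step I expect to be the main obstacle is removing the commutator $K(\eta,u)$ from this inequality so as to recover the genuine classical bound on $B$. Here I would exploit that $K(\eta,u)$ lives only near the transition layer of $\eta$ while $\Pcal_\rho\varphi$ decays rapidly away from $\supp\varphi$, so that by suitably localising and rescaling the test function inside $\O$ --- so that the contribution of the commutator region becomes asymptotically negligible against that of $\supp\nabla\varphi$ --- and then passing to the limit via dominated convergence and the uniform continuity of $h$ on bounded sets, one arrives at $h(A)\le\tfrac{1}{\abs{B}}\int_{B}h(A+\nabla\varphi)\,dx$, which is the required quasiconvexity inequality. This limiting bookkeeping is precisely the content of \cite[Remark~8]{CuKrSc23}; the remaining points, including the measure-theoretic approximation in the first implication, are routine adaptations of the finite-horizon fractional case and are omitted.
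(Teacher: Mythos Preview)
Your proposal is correct and follows essentially the same route as the paper: both defer to \cite[Remark~8]{CuKrSc23} and invoke the same toolkit --- the translation operators $\Qcal_\rho$ and $\Pcal_\rho$, the Leibniz rule of \cite[Lemma~3.8]{BMS24}, and the $L^\infty$-embedding for $p>n/\sigma$. Your sketch in fact adds more detail than the paper's one-line reference, and the steps you spell out (passing to $v=\Qcal_\rho u\in W^{1,\infty}_0(\O_\d)$ for the forward direction; lifting via $\Pcal_\rho$, cutting off, and controlling the commutator for the converse) are the right ones, with the final bookkeeping appropriately deferred to the cited source.
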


\color{black}

\subsection{Scaled kernels} We introduce here the setting and notations for varying horizon nonlocal gradients obtained via scaling of a fixed nonlocal gradient, as they will be used in the limits of vanishing and diverging horizon in Sections~\ref{se: localization} and \ref{se: convergence in infinity}. 

Our starting point is a radial kernel $\rho$ that satisfies \ref{itm:h0}-\ref{itm:upper} and is normalized in the sense that
\begin{equation}\label{eq:normalized}
\supp \rho = \overline{B_1(0)} \quad \text{and} \quad \int_{\Rn} \rho \,dx =n.
\end{equation}
One can then compute that
\begin{align}\label{normalization_Qrho}
\int_{\R^n} Q_\rho\,dx=1, \quad \text{ or equivalently, } \quad \widehat{Q}_\rho(0)=1.
\end{align}
Notice that the kernels from Example~\ref{ex:kernels} can all be rescaled and normalized to satisfy \eqref{eq:normalized}.

The rescaled family of kernels $(\rho_\d)_\d$ for horizons $\d>0$ is then defined by
\begin{align}\label{eq:scaledprelim}
\rho_\d (x) = c_\d \rho\left(\frac{x}{\d}\right), \quad x \in \Rn,
\end{align}
with $(c_\d)_\d \subset (0,\infty)$ a suitable sequence of scaling factors. Precisely, they are chosen as $c_\d = \d^{-n}$ for the limit $\d \to 0$ and as $c_\d = \overline{\rho}(1/\d)^{-1}$ for the limit $\d \to \infty$.
%\color{red} Mention the interpretation of the choice of scaling.\color{olive}

We collect here a few general observations about the rescaled kernels and associated gradients. First,  it follows that $\supp \rho_\d = \overline{B_\d(0)}$, which makes $D_{\rho_\d}$ a nonlocal gradient with horizon $\d$; in particular, the initial gradient $D_\rho$ corresponds with the gradient $D_{\rho_1}$ with horizon distance equal to $1$.  Moreover, the rescaling preserves the key properties of the kernel function, that is, for any $\d >0$, the kernel $\rho_\d$ also satisfies \ref{itm:h0}-\ref{itm:upper}. This makes all the results in the previous sections applicable to these kernels as well, in particular, the existence result of Theorem~\ref{th:existence}. 
Finally, we observe that the kernel associated to $\rho_\d$ satisfies
\begin{equation}\label{eq:qrhoscaling}
Q_{\rho_\d} = c_\d Q_\rho \left( \frac{\cdot}{\d}\right) \quad \text{and} \quad \widehat{Q}_{\rho_\d} = c_\d \d^{n}\widehat{Q}_\rho (\d \cdot),
\end{equation}
where we have used \cite[Proposition~2.3.22\,(7)]{Gra14a} for the interaction between scaling and Fourier transforms.

To highlight the dependence on the horizon parameter, we denote the spaces associated to $D_{\rho_\d}$ by 
\[
H^{\rho,p,\d}(\Rn) = \{ u \in L^p(\Rn)\,:\, D_{\rho_\d} u \in L^p(\Rn;\Rn)\},
\]
and similarly for $H^{\rho,p,\d}_0(\O)$; it holds specifically that $H^{\rho, p}(\R^n) = H^{\rho, p, 1}(\R^n)$.

%%%%%%%%%%%%%%%%%%%%%%% SECTION 3 %%%%%%%%%%%%%%%%%%%%%

\color{black}
\section[Localization when $\d\to 0$]{Localization when $\d\to 0$} \label{se: localization}

This section is devoted the localization process, that is, to the asymptotic analysis in the limit of vanishing horizon. %(suitably scaled)
We start by showing that the suitably scaled nonlocal gradients converge to the classical one as $\d\to 0$, and subsequently prove compactness results uniformly in the horizon parameter $\d$. Finally, we utilize these tools to establish the $\Gamma$-convergence of integral functionals depending on scaled nonlocal gradients to their local counterparts as the horizon tends to zero.

For this analysis, we fix a radial kernel $\rho$ that satisfies \ref{itm:h0}-\ref{itm:upper} and \eqref{eq:normalized}, and consider for $\delta \in (0,1]$, the scaled kernels
\[
\rho_\delta = \frac{1}{\delta^n}\rho\left(\frac{\,\cdot\,}{\delta}\right) \quad \text{and} \quad  Q_{\rho_\delta}= \frac{1}{\delta^n}Q_{\rho}\left(\frac{\,\cdot\,}{\delta}\right),
\]
which corresponds to \eqref{eq:scaledprelim} with the scaling factors $c_\d = \d^{-n}$. Observe that this choice of scaling preserves the normalizations $\int_{\R^n} \rho_\delta\,dx =n$ and $\int_{\R^n} Q_{\rho_\d}\,dx=1$ for each $\delta \in (0,1]$, and \eqref{eq:qrhoscaling} specifies to
\begin{equation}\label{eq:qrhofourierloc}
\widehat{Q}_{\rho_\d} = \widehat{Q}_\rho(\d \,\cdot\,).
\end{equation}
Throughout this section, we take $p \in (1,\infty)$ and assume $\O$ to be a bounded Lipschitz domain.

\subsection[Localization of the nonlocal gradient]{Localization of the nonlocal gradient}
Here, we present the convergence of the scaled nonlocal gradients to the classical gradient. Starting with the case of smooth functions, which features an explicit convergence rate, we subsequently extend the analysis to Sobolev functions on bounded domains and the whole space $\Rn$.  In the case of bounded domains, the nonlocal gradient is defined on a smaller set than the classical one, but this difference vanishes as $\d \to 0$. Closely related localization results can be found in \cite{MeS, Arr23}.
\color{black}
\begin{lemma}[Localization of nonlocal gradients]\label{le:recovery}
The following statements hold:
\begin{itemize}
	\item[$(i)$] For each $\phi \in C_c^{\infty}(\R^n)$ and for all $\d \in (0,1]$, one has that
	\begin{align}\label{eq:quadratic convergence}
	\norm{D_{\rho_\d} \phi - \nabla \phi}_{L^{\infty}(\R^n;\R^n)} \leq \d^2 \Lip(\nabla^2 \phi).
	\end{align}
 In particular, $D_{\rho_\d} \phi \to \nabla \phi$ uniformly on $\R^n$ as $\d \to 0$. \medskip
	
	\item[$(ii)$] For each $u \in W^{1,p}(\O)$, it holds that
	\[
\mathbbm{1}_{\O_{-\d}} D_{\rho_\d}u   \to \nabla u \quad \text{in } L^p(\O;\R^n) \text{ as } \d \to 0;
\]
recall that $\O_{-\d}:=\{x \in \O \,:\, \dist(x,\O^c) > \d\}$.\medskip

	\item[$(iii)$] For each $u \in W^{1,p}(\Rn)$, one has that $u\in H^{\rho_\d, p}(\R^n)$ for all $\delta\in (0,1]$, and
	\[
	D_{\rho_\d}u  \to \nabla u \quad \text{in } L^p(\Rn;\R^n) \text{ as }\d \to 0.
	\]
\end{itemize}
\end{lemma}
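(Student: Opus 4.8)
The plan is to establish the three parts in order, each one bootstrapping the next. For part $(i)$, I would exploit the convolution identity $D_{\rho_\d}\phi = Q_{\rho_\d} * \nabla\phi$ together with the mass normalization $\int_{\R^n} Q_{\rho_\d}\,dx = 1$. Writing
\[
D_{\rho_\d}\phi(x) - \nabla\phi(x) = \int_{\R^n} \bigl(\nabla\phi(x-y) - \nabla\phi(x)\bigr) Q_{\rho_\d}(y)\,dy,
\]
I would Taylor-expand $\nabla\phi$ to second order around $x$; since $Q_{\rho_\d}$ is radial, the first-order term $\int y\, Q_{\rho_\d}(y)\,dy$ vanishes by symmetry, leaving a remainder controlled by $\tfrac12 \Lip(\nabla^2\phi)\int |y|^2 Q_{\rho_\d}(y)\,dy$. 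The scaling $Q_{\rho_\d} = \d^{-n}Q_\rho(\cdot/\d)$ turns $\int|y|^2 Q_{\rho_\d}(y)\,dy$ into $\d^2 \int |z|^2 Q_\rho(z)\,dz$, and because $\supp\rho = \overline{B_1(0)}$ forces $\supp Q_\rho \subset \overline{B_1(0)}$, one has $\int |z|^2 Q_\rho(z)\,dz \le \int Q_\rho(z)\,dz = 1$; this yields the bound $\d^2\Lip(\nabla^2\phi)$ (possibly after absorbing the factor $\tfrac12$ via $|z|^2 \le |z|$ on the unit ball, which actually improves the constant). Uniform convergence on $\R^n$ is then immediate.

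\textbf{Passing to Sobolev functions.} For part $(ii)$, fix $u \in W^{1,p}(\O)$ and first reduce to the case $u \in C^\infty(\overline{\O})$ by density; I would be careful that the operators $\mathbbm{1}_{\O_{-\d}}D_{\rho_\d}$ are bounded on $W^{1,p}(\O)$ uniformly in $\d$, which follows because for $x \in \O_{-\d}$ the integral defining $D_{\rho_\d}u(x)$ only sees values of $u$ on $\overline{B_\d(x)} \subset \O$, so $\mathbbm{1}_{\O_{-\d}}D_{\rho_\d}u = \mathbbm{1}_{\O_{-\d}}(Q_{\rho_\d} * \nabla\tilde u)$ for any $W^{1,p}(\R^n)$-extension $\tilde u$ of $u$, and convolution with an $L^1$-normalized kernel is a contraction on $L^p$. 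Then for smooth $u$ the pointwise bound from the argument in $(i)$ (applied on $\O_{-\d}$, where the local Taylor expansion is valid) gives $\|\mathbbm{1}_{\O_{-\d}}D_{\rho_\d}u - \mathbbm{1}_{\O_{-\d}}\nabla u\|_{L^\infty(\O)} \le \d^2\Lip(\nabla^2 u; \O)$, while $\|\mathbbm{1}_{\O\setminus\O_{-\d}}\nabla u\|_{L^p(\O)} \to 0$ since $|\O\setminus\O_{-\d}| \to 0$; combining these and using the uniform bound to pass back to general $u$ via a standard $3\epsilon$-argument completes $(ii)$.

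\textbf{The whole-space case.} For part $(iii)$, given $u \in W^{1,p}(\R^n)$ I would again use $D_{\rho_\d}u = Q_{\rho_\d} * \nabla u$ (valid by extending \eqref{eq:translationsmooth} through density, exactly as in Lemma~\ref{le:translation} applied to $\rho_\d$), which immediately gives $u \in H^{\rho_\d,p}(\R^n)$ with $\|D_{\rho_\d}u\|_{L^p} \le \|\nabla u\|_{L^p}$. Then $D_{\rho_\d}u - \nabla u = Q_{\rho_\d} * \nabla u - \nabla u$ is precisely the statement that $(Q_{\rho_\d})_\d$ is an approximate identity: the family has mass $1$, is nonnegative, and concentrates at the origin since $\supp Q_{\rho_\d} \subset \overline{B_\d(0)}$. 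The classical mollifier convergence theorem then gives $Q_{\rho_\d} * g \to g$ in $L^p(\R^n)$ for every $g \in L^p(\R^n;\R^n)$, applied with $g = \nabla u$. I expect the \emph{main obstacle} to be bookkeeping rather than depth: namely, justifying the convolution representation $D_{\rho_\d}u = Q_{\rho_\d}*\nabla u$ for merely $W^{1,p}$ functions (needs a density argument, and one should check $Q_{\rho_\d} \in L^1$ — which holds since $Q_{\rho_\d}$ is compactly supported and locally integrable), and in $(ii)$ keeping straight the interplay between the shrinking domain $\O_{-\d}$ and the need for a uniform-in-$\d$ operator bound to run the density argument. Once those are in place, each convergence reduces to a one-line application of Taylor expansion (for the rate) or the approximate-identity property (for the $L^p$ statements).
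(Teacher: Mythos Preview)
Your proposal is correct and matches the paper's approach: for $(i)$ the paper likewise uses the convolution identity $D_{\rho_\d}\phi = Q_{\rho_\d}*\nabla\phi$, Taylor-expands, kills the linear term by radiality of $Q_{\rho_\d}$, and bounds the remainder by $\d^2\Lip(\nabla^2\phi)$ using $\supp Q_{\rho_\d}\subset\overline{B_\d(0)}$ and $\|Q_{\rho_\d}\|_{L^1}=1$; for $(ii)$ it runs the same density argument with the uniform-in-$\d$ contraction $\|Q_{\rho_\d}*\nabla u\|_{L^p}\le\|\nabla u\|_{L^p}$ via Young's inequality. For $(iii)$ the paper only writes ``similar arguments as in Part~$(ii)$'' or defers to an external reference, so your explicit approximate-identity argument for $(Q_{\rho_\d})_\d$ is in fact a slightly cleaner realization of what the paper leaves implicit.
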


\begin{proof}
\textit{Part $(i)$:} Let $\psi \in C_c^{\infty}(\Rn;\Rn)$. Then the multivariate version of Taylor's theorem with integral remainder shows for $x \in \Rn$ that
\begin{align*}
	\abs{Q_{\rho_\d}*\psi(x)- \psi(x)} &= \abslr{\int_{B_\d(x)} Q_{\rho_\d}(x-y)\bigl(\psi(y) - \psi(x)\bigr)\,dy}\\
	&\leq \abslr{\int_{B_\d(x)} Q_{\rho_\d}(x-y)\nabla \psi(x)  (y-x)\,dy}\\
	&\qquad +\int_{B_\d(x)} Q_{\rho_\d}(x-y)\abslr{\int_0^1\bigl(\nabla \psi(x+t(y-x))-\nabla\psi(x)\bigr) (y-x)\,dt}\,dy\\
	&\leq \d^2 \Lip(\nabla \psi)\int_{B_\d(x)} Q_{\rho_\d}(x-y)\,dy= \d^2 \Lip(\nabla \psi),
\end{align*}
where we have used that $\norm{Q_{\rho_\d}}_{L^1(\R^n)}=1$, and also the radiality of $Q_{\rho_\d}$ to cancel the term in the second line. Applying this estimate with $\psi = \nabla \phi$ for any $\varphi\in C_c^\infty(\R^n)$ proves the claim in light of \eqref{eq:translationsmooth}. \smallskip

\textit{Part $(ii)$}: Since $u \in W^{1,p}(\O)$, the nonlocal gradient $D_{\rho_\d}u$ is well-defined in $\O_{-\d}$ and coincides with $Q_{\rho_\d}*\nabla u$ on this set (cf.~\cite[Proposition~3.5]{BMS24}). \color{black} Let $j\in \N$ and choose $\phi_j \in C_c^{\infty}(\R^n)$ such that 
\begin{align*}
\norm{\phi_j - u}_{W^{1,p}(\O)} \leq \frac{1}{j}, 
\end{align*}
which is possible as $\O$ is a bounded Lipschitz domain by assumption. Additionally, we can choose $\d=\d(j)$ small enough in light of Part $(i)$ such that 
\[
\norm{D_{\rho_\d}\phi_j - \nabla \phi_j}_{L^p(\O;\R^n)} \leq \frac{1}{j} \quad \text{and} \quad \norm{\nabla u}_{L^p(\O\setminus \O_{-\d};\Rn)} \leq \frac{1}{j}.
\]
The previous estimates along with~\eqref{eq:translationsmooth} then imply
\begin{align*}
\norm{\mathbbm{1}_{\O_{-\d}} D_{\rho_\d}u-\nabla u}_{L^p(\O;\Rn)} &\leq \norm{D_{\rho_\d}u-D_{\rho_\d}\phi_j}_{L^p(\O_{-\d};\Rn)}+\norm{D_{\rho_\d}\phi_j-\nabla\phi_j}_{L^p(\O_{-\d};\Rn)} \\
& \qquad +\norm{\nabla \phi_j - \nabla u}_{L^p(\O_{-\d};\Rn)}+\norm{\nabla u}_{L^p(\O\setminus \O_{-\d};\Rn)}\\\
& \leq \norm{Q_{\rho_\delta} \ast \nabla u - Q_{\rho_\delta}\ast \nabla \phi_j}_{L^p(\O_{-\d};\Rn)} + \frac{3}{j} \\ & \leq \norm{Q_{\rho_\d}}_{L^1(\Rn)} \norm{\nabla u - \nabla \phi_j}_{L^p(\O;\Rn)} + \frac{3}{j} \leq \frac{4}{j},
\end{align*}
where the last line is due to Young's convolution inequality.
 \smallskip

\color{black}
\textit{Part $(iii)$:} This follows with similar arguments as in Part\,$(ii)$ or, alternatively, from \cite[Theorem~C]{Arr23} with $\Acal=\nabla$.
\end{proof}

\begin{remark} \label{rem: localization of NL gradients}
a) In view of estimate~\eqref{eq:quadratic convergence}, $D_{\rho_\d}$ converges to $\nabla$ quadratically in $\d$, given that $\nabla^2 \phi$ is Lipschitz continuous. More generally, if $\varphi$ is twice differentiable such that $\nabla^2 \phi$ is $\alpha$-H\"{o}lder continuous with $\alpha\in (0,1]$, a similar argument induces the convergence rate $\delta^{1+\alpha}$, while for a differentiable $\varphi$ with $\alpha$-H\"{o}lder continuous gradient, convergence takes place at a rate of $\d^{\alpha}$. \smallskip

b) Our $\Gamma$-convergence result in Section~\ref{subsec:Gamma delta0} is formulated for admissible functions with prescribed  Dirichlet conditions in the complement of $\O$. 
Therefore, Lemma~\ref{le:recovery}\,$(iii)$ is sufficient for these purposes. However, the sharper result in Part\,$(ii)$ for bounded domains can be useful in the future for studying vanishing-horizon limits in more general settings, such as the Neumann-type problems considered in \cite{KrS24}. 
\end{remark}

\color{black}

\subsection{Compactness uniformly in $\delta \in (0,1]$}
In this section, we establish a compactness result for the nonlocal gradients that hold uniformly in the horizon parameter. 
The following theorem, which is also interesting in its own right (cf.~\eqref{eq:spaces delta independent} below), serves as a technical basis by providing a comparison between the norms of the nonlocal gradients with different horizons; this includes also the classical gradient, denoted for consistency by $D_{\rho_0}:=\nabla$. Our proof relies  on Fourier multiplier theory and takes inspiration from the one of \cite[Theorem~7.2]{BMS24} for comparing Sobolev spaces associated to different nonlocal gradients. 
\color{black}

\begin{theorem}[Comparison between scaled nonlocal gradients]\label{th:comparison}
Let $\bar{\delta}>0$ and $(\delta_1,\delta_2) \in [\bar{\delta},1]\times[0,1]$. Then, there exists a constant $C=C(\rho,n,p,\bar{\delta})>0$ such that
\[
\norm{D_{\rho_{\delta_1}}\varphi}_{L^p(\R^n;\R^n)} \leq C \norm{D_{\rho_{\delta_2}}\varphi}_{L^p(\R^n;\R^n)} \quad \text{for all $\varphi \in C_c^{\infty}(\R^n)$}.
\]
\end{theorem}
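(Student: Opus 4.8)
The plan is to pass to the Fourier side and express both nonlocal gradients as Fourier multipliers acting on $\nabla\varphi$, then invoke the Mihlin--H\"ormander multiplier theorem to bound the $L^p$-norm of $D_{\rho_{\delta_1}}\varphi$ by a constant times $\|\nabla\varphi\|_{L^p}$, and finally reverse the roles to bound $\|\nabla\varphi\|_{L^p}$ by $\|D_{\rho_{\delta_2}}\varphi\|_{L^p}$; composing the two inequalities gives the claim. The crucial point is to track the dependence of all constants on the horizon parameters and show uniformity for $\delta_1 \in [\bar\delta,1]$ and $\delta_2 \in [0,1]$.

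\textbf{Key steps.}
First I would recall from \eqref{eq:drhofourier} and \eqref{eq:qrhofourierloc} that for $\varphi \in C_c^\infty(\R^n)$,
\[
\widehat{D_{\rho_\delta}\varphi}(\xi) = 2\pi i \xi\, \widehat{Q}_\rho(\delta\xi)\,\widehat\varphi(\xi),
\qquad
\widehat{\nabla\varphi}(\xi) = 2\pi i \xi\,\widehat\varphi(\xi),
\]
so that $D_{\rho_{\delta_1}}\varphi$ is obtained from $\nabla\varphi$ by the scalar Fourier multiplier $m_{\delta_1}(\xi) := \widehat{Q}_\rho(\delta_1\xi)$, while conversely $\nabla\varphi = $ (vector multiplier $1/\widehat{Q}_\rho(\delta_2\xi)$) applied to $D_{\rho_{\delta_2}}\varphi$ — here the convention $D_{\rho_0}=\nabla$ corresponds to the trivial multiplier $1$. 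The second step is to verify the Mihlin--H\"ormander condition $|\partial^\alpha m(\xi)| \le C_\alpha |\xi|^{-|\alpha|}$ for $|\alpha| \le \lfloor n/2\rfloor + 1$ for both $m_{\delta_1}$ and $1/m_{\delta_2}$, with constants \emph{independent of the horizon}. For $m_{\delta_1}$, the chain rule gives $\partial^\alpha m_{\delta_1}(\xi) = \delta_1^{|\alpha|}(\partial^\alpha \widehat{Q}_\rho)(\delta_1\xi)$, and estimate \eqref{eq:bms2} yields $|\partial^\alpha m_{\delta_1}(\xi)| \le C_\alpha \delta_1^{|\alpha|}|\delta_1\xi|^{-|\alpha|}|\widehat{Q}_\rho(\delta_1\xi)| = C_\alpha|\xi|^{-|\alpha|}|\widehat{Q}_\rho(\delta_1\xi)|$; since $\widehat{Q}_\rho$ is continuous with $\widehat{Q}_\rho(0)=1$ and bounded (by \eqref{eq:bms1} and \ref{itm:upper}, $\widehat{Q}_\rho$ is bounded on all of $\R^n$), the factor $|\widehat{Q}_\rho(\delta_1\xi)|$ is bounded uniformly, so the Mihlin constant of $m_{\delta_1}$ is bounded uniformly in $\delta_1 \in [0,1]$ — this in fact even handles $\delta_1 \in (0,1]$ without needing $\bar\delta$. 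For $1/m_{\delta_2}(\xi) = 1/\widehat{Q}_\rho(\delta_2\xi)$, one uses Lemma~\ref{lem:bms}, \ref{itm:lower} and \ref{itm:upper} to see that $\widehat{Q}_\rho$ is positive everywhere and $\langle\xi\rangle^{\sigma-1}/\widehat{Q}_\rho(\xi)$ (equivalently, a lower bound $\widehat{Q}_\rho(\xi) \gtrsim \langle\xi\rangle^{\sigma-1}$) makes $1/\widehat{Q}_\rho$ polynomially bounded; combined with \eqref{eq:bms2} by the Leibniz rule one gets $|\partial^\alpha (1/\widehat{Q}_\rho)(\xi)| \le C_\alpha |\xi|^{-|\alpha|}/\widehat{Q}_\rho(\xi)$, but now the pointwise size $1/\widehat{Q}_\rho(\delta_2\xi)$ need not be bounded uniformly — and here is where $\delta_2$ must stay bounded away from $0$ or rather, one uses that it suffices to bound $\|\nabla\varphi\|_{L^p}$ by $C\|D_{\rho_{\delta_2}}\varphi\|_{L^p}$, which after rescaling $x\mapsto \delta_2 x$ reduces to the fixed case $\delta_2 = 1$ up to a factor $\delta_2^{-1}$; since $\delta_2 \le 1$ this factor is harmless, giving the inequality with a $\delta_2$-independent constant. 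Finally, I would chain: $\|D_{\rho_{\delta_1}}\varphi\|_{L^p} \le C_1 \|\nabla\varphi\|_{L^p} \le C_1 C_2 \|D_{\rho_{\delta_2}}\varphi\|_{L^p}$.

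\textbf{Main obstacle.}
The delicate point is the uniformity of constants, specifically ruling out degeneration as $\delta_2 \to 0$ in the second inequality. A clean way to do this is to exploit scaling invariance directly: writing $\varphi_{\delta_2}(x) := \varphi(\delta_2 x)$, one checks $\nabla\varphi_{\delta_2} = \delta_2 (\nabla\varphi)(\delta_2\,\cdot)$ and $D_{\rho_{\delta_2}}\varphi = \delta_2^{-1}(D_{\rho_1}\varphi_{1/\delta_2\cdot\delta_2}?)$... more carefully, $D_{\rho_{\delta_2}}\varphi(x) = (Q_{\rho_{\delta_2}} * \nabla\varphi)(x)$ and $Q_{\rho_{\delta_2}}(y) = \delta_2^{-n}Q_\rho(y/\delta_2)$, so a change of variables shows $\|D_{\rho_{\delta_2}}\varphi\|_{L^p(\R^n)} = \delta_2^{-1}\|D_{\rho_1}(\varphi(\delta_2\,\cdot))\|_{L^p(\R^n)}$ while $\|\nabla\varphi\|_{L^p} = \delta_2^{-1}\|\nabla(\varphi(\delta_2\,\cdot))\|_{L^p}$ (both picking up the same $\delta_2^{n/p - 1}$-type factor under rescaling, which cancels in the quotient), so the inequality $\|\nabla\psi\|_{L^p} \le C\|D_{\rho_1}\psi\|_{L^p}$ for the fixed kernel $\rho_1 = \rho$ — which is exactly the content of (the multiplier argument behind) \eqref{estimateHsigmap}/Lemma~\ref{lem:bms} applied with $\delta=1$ — transfers verbatim to every $\delta_2 \in (0,1]$ with the \emph{same} constant $C$. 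The case $\delta_2 = 0$, i.e.\ $\|\nabla\varphi\|_{L^p} \le \|\nabla\varphi\|_{L^p}$, is trivial. Hence the role of $\bar\delta$ is only to keep $\delta_1$ bounded below, but as noted above the multiplier bound for $m_{\delta_1}$ actually works for all $\delta_1 \in (0,1]$; $\bar\delta$ becomes genuinely necessary only when one later wants to include $\delta_1$-derivatives or the reverse direction with $\delta_1$ in the lower slot, so I would simply keep the hypothesis as stated.
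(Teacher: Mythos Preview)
Your factorization through $\nabla$ contains a genuine error. The step $\|\nabla\varphi\|_{L^p} \le C\|D_{\rho_{\delta_2}}\varphi\|_{L^p}$ is simply \emph{false}, even for a single fixed $\delta_2>0$ (say $\delta_2=1$). The scaling argument you carry out is correct in the sense that it reduces the inequality for general $\delta_2$ to the case $\delta_2=1$ with the \emph{same} constant --- but that case itself does not hold. The multiplier $1/\widehat{Q}_\rho$ is not bounded: by Lemma~\ref{lem:bms} and \ref{itm:lower} one has $\widehat{Q}_\rho(\xi)\lesssim |\xi|^{\sigma-1}$ for large $|\xi|$, so $1/\widehat{Q}_\rho(\xi)\gtrsim |\xi|^{1-\sigma}\to\infty$. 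Equivalently, $H^{\rho,p}(\R^n)$ coincides with the Bessel potential space $H^{s,p}(\R^n)$ for some $s<1$, which is strictly larger than $W^{1,p}(\R^n)$; you cannot recover a full gradient from the nonlocal one. Your appeal to \eqref{estimateHsigmap} is a misreading: that estimate controls only the \emph{weaker} $H^{\sigma,p}$-norm, not $\|\nabla\cdot\|_{L^p}$.

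The paper avoids this by never passing through $\nabla$. It works directly with the ratio multiplier
\[
m_{\delta_1,\delta_2}(\xi)=\frac{\widehat{Q}_\rho(\delta_1\xi)}{\widehat{Q}_\rho(\delta_2\xi)},
\]
which \emph{is} bounded: the growth of $1/\widehat{Q}_\rho(\delta_2\xi)$ at high frequency is cancelled by the decay of $\widehat{Q}_\rho(\delta_1\xi)$, and the comparison of the two is exactly what \ref{itm:lower} and \ref{itm:upper} provide. In the case $\delta_1\le\delta_2$ this produces a factor $(\delta_2/\delta_1)^{1-\sigma}$, and it is here --- not in your first step --- that the lower bound $\delta_1\ge\bar\delta$ is genuinely needed to keep the constant finite. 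Your first inequality $\|D_{\rho_{\delta_1}}\varphi\|_{L^p}\le\|\nabla\varphi\|_{L^p}$ is fine (indeed trivial by Young's inequality, since $\|Q_{\rho_{\delta_1}}\|_{L^1}=1$), but it cannot be chained to anything useful in the direction you need.
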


\begin{proof} Define the function $m_{\delta_1, \delta_2}:\R^{n}\setminus \{0\}\to (0, \infty)$ by
	\[
	m_{\delta_1, \delta_2}(\xi) := \frac{\widehat{Q}_\rho(\delta_1 \xi)}{\widehat{Q}_\rho(\delta_2 \xi)},
	\]
recalling that $\widehat{Q}_\rho$ is non-negative. %by~\eqref{normalization_Qrho}.\footnote{$dx$ vs. $dz$}
Then, we find in view of \eqref{eq:drhofourier} and \eqref{eq:qrhofourierloc} that %\color{red} leave out argument $\xi$?\color{black} 
	\[
	\widehat{D_{\rho_{\delta_1}}\varphi} = m_{\delta_1, \delta_2}\widehat{D_{\rho_{\delta_2}}\varphi}. %(\xi) \qquad \text{for $\xi\in \R^n$},
	\]for every $\varphi\in C_c^\infty(\R^n)$; in particular, the case $\d_2=0$ is valid given that $\widehat{Q}_{\rho}(0) =1$.
It now suffices to show with the help the Mihlin-H\"{o}rmander theorem (cf.~e.g.~\cite[Theorem~6.2.7]{Gra14a}) that $m_{\delta_1, \delta_2}$ are $L^p$-multipliers with estimates independent of $\delta_1$ and $\delta_2$. To this aim, we need to prove that for every $\alpha \in \N_0^n$ with $\abs{\alpha} \leq \tfrac{n}{2}+1$, 
	\begin{align}\label{eq:mihlin}
		\absb{\partial^{\alpha} m_{\delta_1, \delta_2}(\xi)} \leq C \abs{\xi}^{-\abs{\alpha}} \quad \text{for all $\xi \not =0$}
	\end{align}
with a constant $C>0$ depending only on $n, \rho$ and $\bar{\delta}$. 

 To this aim, note that the second part of Lemma~\ref{lem:bms} together with the Leibniz and quotient rules for differentiation 
 imply
	\[
	\abs{\partial^{\alpha} m_{\delta_1, \delta_2}(\xi)} \leq C \abs{\xi}^{-\abs{\alpha}} \abs{m_{\delta_1, \delta_2}(\xi)} \quad \text{for all $\xi \not =0$}
	\]
	with $C=C(n)>0$.
	\color{black}
	Therefore, it only remains to verify \eqref{eq:mihlin} for $\alpha =0$, that is, we need to show $m_{\delta_1, \delta_2}$ is uniformly bounded independent of $\delta_1$ and $\delta_2$. We prove this by distinguishing two cases.  \smallskip
	
	\textit{Case 1: $\delta_1 \geq \delta_2$.} For $0<\abs{\xi} \leq \tfrac{1}{\delta_2\epsilon}$ with $\eps>0$ the parameter in the hypotheses (H0)-(H4), one can estimate
	\[
	\absb{m_{\delta_1, \delta_2}(\xi)}=\abslr{\frac{\widehat{Q}_\rho(\delta_1 \xi)}{\widehat{Q}_\rho(\delta_2 \xi)}} \leq\textstyle \bigl(\min_{\overline{B_{1/\epsilon}(0)}}\widehat{Q}_\rho\bigr)^{-1},
	\]
considering that $\norm{\widehat{Q}_\rho}_{L^\infty(\R^n)}\leq \norm{Q_\rho}_{L^1(\R^n)}=1$ by~\eqref{normalization_Qrho}.
On the other hand, we infer for $\abs{\xi} \geq \tfrac{1}{\delta_2\epsilon}$ from Lemma~\ref{lem:bms} that
	\begin{align*}
|m_{\delta_1, \delta_2}(\xi)| 
&=\abslr{\frac{\widehat{Q}_\rho(\delta_1 \xi)}{\widehat{Q}_\rho(\delta_2 \xi)}} \leq C\left(\frac{\delta_2}{\delta_1}\right)^n \frac{\overline{\rho}\Bigl(\frac{1}{\delta_1\abs{\xi}}\Bigr)}{\overline{\rho}\Bigl(\frac{1}{\delta_2\abs{\xi}}\Bigr)} = C\left(\frac{\delta_2}{\delta_1}\right)^{1-\gamma} \frac{\overline{\rho}\Bigl(\frac{1}{\delta_1\abs{\xi}}\Bigr) \Bigl(\frac{1}{\delta_1\abs{\xi}}\Bigr)^{n+\gamma-1}}{\overline{\rho}\Bigl(\frac{1}{\delta_2\abs{\xi}}\Bigr)\Bigl(\frac{1}{\delta_2\abs{\xi}}\Bigr)^{n+\gamma-1}}  \\ &\leq  C \left(\frac{\delta_2}{\delta_1}\right)^{1-\gamma} \leq C\bar{\delta}^{\gamma-1},
	\end{align*}
	where the second inequality uses the almost monotonicity in~\ref{itm:upper}. \smallskip
	
	\textit{Case 2: $\delta_1 \leq \delta_2$.} Similarly as in Case~1, we obtain for $0<\abs{\xi} \leq \tfrac{1}{\delta_1\epsilon}$ that
	\[
	\absb{m_{\delta_1, \delta_2}(\xi)}=\abslr{\frac{\widehat{Q}_\rho(\delta_1 \xi)}{\widehat{Q}_\rho(\delta_2 \xi)}} \leq\textstyle \bigl(\min_{\overline{B_{\delta_2/(\delta_1\epsilon)}(0)}}\widehat{Q}_\rho\bigr)^{-1}\leq \bigl(\min_{\overline{B_{1/(\bar{\delta}\epsilon)}(0)}}\widehat{Q}_\rho\bigr)^{-1}, 
	\]
and for $\abs{\xi} \geq 1/(\delta_1\epsilon)$ by Lemma~\ref{lem:bms} that 
	\[
	\abs{m_{\delta_1, \delta_2}(\xi)} = \abslr{\frac{\widehat{Q}_\rho(\delta_1 \xi)}{\widehat{Q}_\rho(\delta_2 \xi)}} \leq C\left(\frac{\delta_2}{\delta_1}\right)^n \frac{\overline{\rho}\Bigl(\frac{1}{\delta_1\abs{\xi}}\Bigr)}{\overline{\rho}\Bigl(\frac{1}{\delta_2\abs{\xi}}\Bigr)} \leq C \left(\frac{\delta_2}{\delta_1}\right)^{1-\sigma} \leq C\bar{\delta}^{\sigma-1},
	\]
	with the second inequality due to \ref{itm:lower}. \smallskip
	
Finally, combining the two cases shows that $\absb{m_{\delta_1, \delta_2}(\xi)}\leq C=C(\rho, \bar{\delta})$ for all $\xi\neq 0$, which concludes the proof.
\end{proof}

Together with a density argument, the previous theorem shows that 
$H^{\rho,p,\d_1}(\R^n)=H^{\rho,p,\d_2}(\R^n)$ and $H^{\rho,p,\d_1}_0(\Omega) = H^{\rho,p,\d_2}_0(\Omega)$ for all $\d_1,\d_2 \in (0,1]$ or equivalently,
that 
\begin{align}\label{eq:spaces delta independent}
H^{\rho,p,\d}(\R^n)=H^{\rho,p}(\R^n) \quad \text{and} \quad H^{\rho,p,\d}_0(\Omega) = H^{\rho,p}_0(\Omega) \qquad \text{ for all $\d \in (0,1]$.}
\end{align}
In fact, by inspecting the proof of Theorem~\ref{th:comparison}, it is not hard to see that \eqref{eq:spaces delta independent} holds for all $\d >0$, which shows that our nonlocal function spaces do not depend on the horizon parameter $\d$.  Based on this observation, we obtain the following corollary as a consequence of~\eqref{estimateHsigmap}.
\color{black}
\begin{corollary}\label{cor:compactness}
There exists a constant $C=C(\rho,n,\Omega,p)>0$ such that
\[
\norm{u}_{H^{\sigma,p}(\R^n)} \leq C\norm{D_{\rho_\delta}u}_{L^p(\Omega_\d;\R^n)} \quad \text{for all $u \in H^{\rho,p,\d}_0(\Omega)$ and $\delta \in (0,1]$.}
\]
\end{corollary}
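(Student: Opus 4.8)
The plan is to combine the horizon-independence of the nonlocal Sobolev spaces from \eqref{eq:spaces delta independent} with the uniform comparison estimate of Theorem~\ref{th:comparison} and the fixed-kernel inequality~\eqref{estimateHsigmap}. Concretely, given $u \in H^{\rho,p,\d}_0(\Omega) = H^{\rho,p}_0(\Omega)$, I would first reduce to the case $u \in C_c^\infty(\Omega)$ by density (recall that $H^{\rho,p}_0(\Omega)$ is the closure of $C_c^\infty(\Omega)$ and that both $u \mapsto \norm{u}_{H^{\sigma,p}(\R^n)}$ and $u\mapsto \norm{D_{\rho_\d}u}_{L^p}$ are continuous on this space, the latter uniformly in $\d$ thanks to Theorem~\ref{th:comparison}). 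For such $u$, apply~\eqref{estimateHsigmap} — which is the estimate for the unscaled kernel $\rho = \rho_1$, valid on $H^{\rho,p}_0(\Omega)$ — to obtain $\norm{u}_{H^{\sigma,p}(\R^n)} \le C \norm{D_{\rho_1}u}_{L^p(\R^n;\R^n)}$ with $C = C(\rho,n,\Omega,p)$.

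Next I would invoke Theorem~\ref{th:comparison} with $\delta_1 = 1$ and $\delta_2 = \d \in (0,1]$, taking $\bar\delta$ to be any fixed value such as $\bar\delta = 1/2$ — but here one must be slightly careful, since Theorem~\ref{th:comparison} as stated requires $\delta_1 \in [\bar\delta,1]$ and gives a constant depending on $\bar\delta$. Taking $\delta_1 = 1$ is always admissible (choose $\bar\delta = 1$, say, or just note $1 \in [\bar\delta,1]$ for any $\bar\delta \le 1$), so $\norm{D_{\rho_1}\varphi}_{L^p(\R^n;\R^n)} \le C \norm{D_{\rho_\d}\varphi}_{L^p(\R^n;\R^n)}$ for all $\varphi \in C_c^\infty(\R^n)$ with $C = C(\rho,n,p)$ independent of $\d \in (0,1]$. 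Chaining this with the previous inequality yields $\norm{u}_{H^{\sigma,p}(\R^n)} \le C \norm{D_{\rho_\d}u}_{L^p(\R^n;\R^n)}$ for $u \in C_c^\infty(\Omega)$, then for all $u \in H^{\rho,p,\d}_0(\Omega)$ by density.

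Finally, to replace $L^p(\R^n;\R^n)$ by $L^p(\Omega_\d;\R^n)$ on the right-hand side, I would observe that for $u \in H^{\rho,p,\d}_0(\Omega)$ — equivalently $u \in H^{\rho,p}(\R^n)$ with $u = 0$ a.e.\ in $\Omega^c$ — the nonlocal gradient $D_{\rho_\d}u$ vanishes a.e.\ in $\Omega_\d^c = (\Omega + B_\d(0))^c$. Indeed, for $x \notin \Omega_\d$ the ball $B_\d(x)$ does not meet $\Omega$, so both $u(x)$ and $u(y)$ (for $y$ in the support $\overline{B_\d(0)}$ of $\rho_\d$ shifted to $x$) are zero; this is exactly the remark made just after Theorem~\ref{th:existence} applied to $\rho_\d$. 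Hence $\norm{D_{\rho_\d}u}_{L^p(\R^n;\R^n)} = \norm{D_{\rho_\d}u}_{L^p(\Omega_\d;\R^n)}$, which completes the proof.

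I do not anticipate a genuine obstacle here, as all the heavy lifting has already been done in Theorem~\ref{th:comparison} and in the cited result~\eqref{estimateHsigmap}; the only points requiring a little care are (a) checking that the constant from Theorem~\ref{th:comparison} with $\delta_1 = 1$ can be taken independent of $\d \in (0,1]$, which is immediate, and (b) justifying the vanishing of $D_{\rho_\d}u$ outside $\Omega_\d$, which follows directly from the support of $\rho_\d$ and the complementary-value condition. One should also make sure the density argument is legitimate, i.e.\ that the $H^{\sigma,p}$-norm controls nothing that behaves badly under $C_c^\infty$-approximation — but since $H^{\rho,p}_0(\Omega) \hookrightarrow H^{\sigma,p}(\R^n)$ continuously by~\eqref{estimateHsigmap} together with the Poincaré inequality, approximating sequences in $H^{\rho,p}_0(\Omega)$ converge also in $H^{\sigma,p}(\R^n)$, so passing to the limit in the inequality is routine.
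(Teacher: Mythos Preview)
Your proposal is correct and follows essentially the same route as the paper, which derives the corollary directly from the comparison estimate of Theorem~\ref{th:comparison} (applied with $\delta_1=1$, $\delta_2=\d$) combined with~\eqref{estimateHsigmap} for the unscaled kernel. Your write-up is in fact more detailed than the paper's one-line justification, and the care you take with the constant in Theorem~\ref{th:comparison}, the density argument, and the support observation $D_{\rho_\d}u = 0$ on $\Omega_\d^c$ is all appropriate.
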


\begin{remark}
We observe that there is another way of proving Corollary~\ref{cor:compactness} that does not pass through the stronger statement of Theorem~\ref{th:comparison}. 
For this alternative argument, it suffices to require that the kernel $\rho$ satisfies \ref{itm:h0}-\ref{itm:derivatives} and
\[
\liminf_{\abs{x} \to 0} \rho(x)\abs{x}^{n+\sigma-1} > 0.
\] 
Indeed, one can compare $\rho_\d$ with the kernel from Example~\ref{ex:kernels}\,a) for $s=\sigma$ by arguing as in \cite[Theorem~7.2]{BMS24} and checking that the constants are independent of $\d$. When $p=2$, even \ref{itm:derivatives} is not necessary (cf.~\cite[Theorem~7.2]{BMS24}), so that also the truncated fractional gradients in~Remark~\ref{rem:relaxed} are covered.
\end{remark}

\color{black}
 In order to ensure the existence of convergent subsequences required for the forthcoming $\Gamma$-convergence result (see Theorem~\ref{th: Gamma convergence to 0}), we proceed with the following compactness statement.

\begin{lemma}[Convergent subsequences for vanishing horizon]\label{le:compactness} 
	Let $(\d_j)_j \subset (0,1]$ be a sequence with $\d_j \to 0$ and suppose that $u_j \in H^{\rho,p,\d_j}_0(\Omega)$ for each $j \in \N$ with
	\[
	\sup_{j \in \N} \norm{D_{\rho_{\d_j}} u_j}_{L^p(\Omega_{\d_j};\R^n)} < \infty.
	\]
	Then, there is a $u \in W^{1,p}_0(\Omega)$ (extended to $\Rn$ as zero) such that, up to a non-relabeled subsequence, 
	\[
	u_j \to u \ \  \text{in $L^p(\R^n)$} \quad \text{and} \quad D_{\rho_{\d_j}}u_j \weakto \nabla u \ \  \text{in $L^p(\R^n;\R^n)$ as $j \to \infty$.}
	\]
\end{lemma}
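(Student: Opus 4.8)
The strategy is to obtain compactness from the uniform bound through the estimate of Corollary~\ref{cor:compactness}, extract a limit in a fixed Bessel potential space, identify the limit as a classical Sobolev function via the localization results, and finally pin down the weak limit of the nonlocal gradients. First I would apply Corollary~\ref{cor:compactness}, which gives a $\d$-independent constant $C>0$ with $\norm{u_j}_{H^{\sigma,p}(\R^n)} \leq C\norm{D_{\rho_{\d_j}}u_j}_{L^p(\Omega_{\d_j};\R^n)}$, so that $(u_j)_j$ is bounded in $H^{\sigma,p}(\R^n)$; since all $u_j$ vanish on $\Omega^c$ and $H^{\sigma,p}(\R^n)$ is reflexive, after passing to a non-relabeled subsequence we get $u_j \weakto u$ in $H^{\sigma,p}(\R^n)$ for some $u$ supported in $\overline{\Omega}$. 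The compact embedding $H^{\sigma,p}(\R^n)\embed L^p_{\loc}(\R^n)$ from~\eqref{embeddingBessel} upgrades this to $u_j \to u$ strongly in $L^p(K)$ for every compact $K$; because the $u_j$ are all supported in the fixed bounded set $\overline\Omega$, this gives $u_j \to u$ strongly in $L^p(\R^n)$.

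Next, the uniform bound on $\norm{D_{\rho_{\d_j}}u_j}_{L^p(\Omega_{\d_j};\R^n)}$ together with the fact that $D_{\rho_{\d_j}}u_j$ vanishes outside $\Omega_{\d_j}\subset \Omega_1$ gives a bound on $\norm{D_{\rho_{\d_j}}u_j}_{L^p(\R^n;\R^n)}$, so up to a further subsequence $D_{\rho_{\d_j}}u_j \weakto V$ in $L^p(\R^n;\R^n)$ for some $V$, which is supported in $\overline{\Omega_1}$. It remains to identify $V = \nabla u$ and to verify $u \in W^{1,p}_0(\Omega)$. For the identification I would pass to the limit in the weak formulation: for any fixed $\psi \in C_c^{\infty}(\R^n;\R^n)$ we have $\int_{\R^n} D_{\rho_{\d_j}}u_j \cdot \psi\,dx = -\int_{\R^n} u_j\, \diver_{\rho_{\d_j}}\psi\,dx$. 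The left side converges to $\int_{\R^n} V\cdot\psi\,dx$. For the right side, $\diver_{\rho_{\d_j}}\psi = Q_{\rho_{\d_j}}*\diver\psi$ by the analogue of~\eqref{eq:translationsmooth} for the divergence (equivalently, apply Lemma~\ref{le:recovery}\,$(i)$ componentwise to get $\diver_{\rho_{\d_j}}\psi \to \diver\psi$ uniformly and with supports contained in a fixed compact set); combined with $u_j \to u$ in $L^p(\R^n)$, the right side converges to $-\int_{\R^n} u\, \diver\psi\,dx$. Hence $\int_{\R^n} V\cdot\psi\,dx = -\int_{\R^n} u\,\diver\psi\,dx$ for all test fields $\psi$, which says exactly that $u$ has weak classical gradient $\nabla u = V \in L^p(\R^n;\R^n)$. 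Since $u\in L^p(\R^n)$ with $\nabla u \in L^p(\R^n;\R^n)$ and $u=0$ a.e.\ in $\Omega^c$ with $\Omega$ a bounded Lipschitz domain, we conclude $u \in W^{1,p}_0(\Omega)$ (extended by zero).

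Putting these together yields $u_j \to u$ in $L^p(\R^n)$ and $D_{\rho_{\d_j}}u_j \weakto \nabla u$ in $L^p(\R^n;\R^n)$ along the extracted subsequence, as claimed. The main obstacle is the convergence of the dual pairing $\int u_j\,\diver_{\rho_{\d_j}}\psi\,dx$: one must be careful that the nonlocal divergence operators vary with $j$, so a plain weak--strong pairing argument is not immediately available; the point is that $\diver_{\rho_{\d_j}}\psi$ converges \emph{strongly} (indeed uniformly, with uniformly compact support) to $\diver\psi$ by the localization estimate in Lemma~\ref{le:recovery}\,$(i)$, which is what makes the product converge against the merely weakly convergent — but in fact strongly $L^p$-convergent — sequence $u_j$. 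A secondary, more routine point is ensuring the uniform support control so that local compactness in $L^p_{\loc}$ becomes global compactness in $L^p(\R^n)$; this is immediate since every $u_j$ lives on $\overline\Omega$ and every $D_{\rho_{\d_j}}u_j$ on $\overline{\Omega_1}$.
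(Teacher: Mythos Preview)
Your proposal is correct and follows essentially the same approach as the paper: bound $(u_j)_j$ in $H^{\sigma,p}(\R^n)$ via Corollary~\ref{cor:compactness}, use the compact embedding~\eqref{embeddingBessel} together with the common support $\overline{\Omega}$ to get strong $L^p$-convergence, extract a weak $L^p$-limit $V$ of the nonlocal gradients, and identify $V=\nabla u$ by testing against smooth functions and invoking the localization result Lemma~\ref{le:recovery}\,$(i)$ for the nonlocal divergence. The only cosmetic difference is that the paper tests componentwise with scalar $\phi\in C_c^\infty(\R^n)$ rather than with vector fields $\psi$.
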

\color{black}
\begin{proof}
By Corollary~\ref{cor:compactness}, the sequence $(u_j)_j$ is bounded in $H^{\sigma,p}(\R^n)$. Since each $u_j$ is supported in $\overline{\Omega}$, we conclude from  the compact embedding $H^{\sigma,p}(\Rn)\hookrightarrow\hookrightarrow L^p_{\loc}(\Rn)$ (cf.~\ref{embeddingBessel}) that there is a $u \in L^p(\R^n)$ with $u = 0$ a.e.~in $\Omega^c$ such that, up to a non-relabeled subsequence, 
\begin{align*}
u_j \to u\qquad \text{ in $L^p(\R^n)$ as $j\to \infty$.}
\end{align*}   Moreover, up to extracting a potential further subsequence, we find that $D_{\rho_{\d_j}}u_j \weakto V$ in $L^p(\R^n;\R^n)$ for some $V\in L^p(\R^n;\R^n)$. To deduce that $V=\nabla u$, we compute for $\phi \in C_c^{\infty}(\R^n)$ that
	\begin{align*}
		\int_{\R^n} V\,\phi\,dx &= \lim_{j \to\infty}\int_{\R^n} D_{\rho_{\d_j}}u_j\,\phi\,dx\\
		&= - \lim_{j \to \infty}\int_{\R^n} u_j\,D_{\rho_{\d_j}}\phi\,dx\\
		&=-\int_{\R^n} u\,\nabla\phi\,dx,
	\end{align*}
	where the last equality follows from the localization result for the nonlocal gradients in Lemma~\ref{le:recovery}\,$(i)$. This shows that $u \in W^{1,p}_0(\Omega)$ (extended to $\R^n$ as zero) with $\nabla u=V$, which finishes the proof.
\end{proof}

\subsection[$\Gamma$-convergence $\d \to 0$]{$\Gamma$-convergence $\d \to 0$}\label{subsec:Gamma delta0}
We are now in the position to make the conjectured localization of our variational problems in the limit $\d\to 0$ rigorous, choosing $\Gamma$-convergence as a natural framework.

Before stating the theorem, let us collect the relevant objects. The family of vectorial energy functionals $(\Fcal_\d)_{\d\in (0,1]}$ with $\Fcal_\d :L^p(\R^n;\R^m) \to \R_\infty:=\R \cup \{\infty\}$ for $\d \in (0,1]$ is given by
\begin{equation}\label{eq:Fcald}
\Fcal_\d(u):=\begin{cases}
\displaystyle\int_{\Omega_\d} f(x,D_{\rho_\d} u)\,dx &\text{for $u \in H^{\rho,p,\d}_0(\Omega;\R^m)$},\\
\infty &\text{else,}
\end{cases}
\end{equation}
where $f:\Omega_1 \times \Rmn \to \R$ is a suitable Carath\'{e}odory integrand; considering that the functions in the domain of $\Fcal_\delta$ are defined on $\Rn$ with zero Dirichlet conditions in $\O^c$, we may take, without loss of generality, the integrals over the bounded set $\O_\d$. 

As prospective limit functional, we introduce %We prove the $\Gamma$-convergence of $(\Fcal_\d)_\d$ with respect to the $L^p(\R^n;\R^m)$-convergence as $\d \to 0$ to the functional
 $\Fcal_0:L^p(\R^n;\R^m) \to \R_\infty$ given by
\begin{equation}\label{eq:Fcal0}
\Fcal_0(u):=\begin{cases}
\displaystyle\int_{\Omega} f(x,\nabla u)\,dx &\text{for $u \in W^{1,p}_0(\Omega;\R^m)$},\\
\infty &\text{else;}
\end{cases}
\end{equation}
here, functions in $W^{1,p}_0(\Omega;\R^m)$ are identified with their extension to $\R^n$ as zero.

\color{black}
\begin{theorem}[Localization for vanishing horizon via $\Gamma$-convergence] \label{th: Gamma convergence to 0}
Let $f:\Omega_1\times \Rmn \to \R$ be a Carath\'{e}odory integrand such that
\[
c\abs{A}^p-C\leq f(x,A) \leq C(1+\abs{A}^p) \quad \text{for a.e.~$x \in \Omega_1$ and all $A \in \Rmn$}
\]
with $c,C>0$. If $f(x,\cdot)$ is quasiconvex for a.e.~$x \in \Omega$, then the family $(\Fcal_\d)_{\d \in (0,1]}$ in \eqref{eq:Fcald} $\Gamma$-converges with respect to $L^p(\R^n;\R^m)$-convergence to the functional $\Fcal_0$ in \eqref{eq:Fcal0} as $\d \to 0$, that is,
\[
\Gamma(L^p)\text{-}\lim_{\d \to 0} \Fcal_\d =\Fcal_0.
\]
Additionally, $(\Fcal_\d)_\d$ is equi-coercive with respect to convergence in $L^p(\R^n;\R^m)$.
\end{theorem}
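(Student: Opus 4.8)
The plan is to verify the three defining properties of $\Gamma$-convergence — equi-coercivity, the $\liminf$-inequality, and the existence of recovery sequences — working throughout along an arbitrary sequence $\d_j\to 0$, which is legitimate since $L^p(\R^n;\R^m)$ is metrizable. Two earlier results carry the argument: the uniform Poincar\'{e}-type estimate of Corollary~\ref{cor:compactness} together with the compactness statement of Lemma~\ref{le:compactness}, and the translation identity $D_{\rho_\d}u=\nabla(Q_{\rho_\d}\ast u)$ from Lemma~\ref{le:translation}, which reduces the vectorial quasiconvex machinery to its classical form. I would treat equi-coercivity first: for any sequence $(u_{\d_j})_j$ with $u_{\d_j}\in H^{\rho,p,\d_j}_0(\O;\R^m)$ and $\sup_j\Fcal_{\d_j}(u_{\d_j})<\infty$, the lower growth bound $f(x,A)\geq c\abs{A}^p-C$ and the inclusion $\O_{\d_j}\subset\O_1$ yield a uniform bound on $\norm{D_{\rho_{\d_j}}u_{\d_j}}_{L^p(\O_{\d_j};\Rmn)}$, and Lemma~\ref{le:compactness} (applied componentwise) then produces, along a subsequence, an $L^p$-limit $u\in W^{1,p}_0(\O;\R^m)$; this is exactly the required relative compactness of sublevel sets.

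For the $\liminf$-inequality, given $u_{\d_j}\to u$ in $L^p$ I would assume $\liminf_j\Fcal_{\d_j}(u_{\d_j})<\infty$ and pass to a subsequence realizing the $\liminf$ with uniformly bounded energy. By Lemma~\ref{le:compactness}, $u\in W^{1,p}_0(\O;\R^m)$ and, after a further subsequence, $D_{\rho_{\d_j}}u_{\d_j}\weakto\nabla u$ in $L^p(\R^n;\Rmn)$. The crucial step is to realize the nonlocal gradients as classical ones: setting $v_j:=Q_{\rho_{\d_j}}\ast u_{\d_j}$ gives $\nabla v_j=D_{\rho_{\d_j}}u_{\d_j}$ by Lemma~\ref{le:translation}, and since $Q_{\rho_{\d_j}}$ is a nonnegative approximate identity of unit mass supported in $\overline{B_{\d_j}(0)}$, Young's convolution inequality forces $v_j\to u$ in $L^p$, hence $v_j\weakto u$ in $W^{1,p}(\O;\R^m)$. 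Splitting $\O_{\d_j}=\O\cup(\O_{\d_j}\setminus\O)$ and discarding the annular part with the bound $f\geq -C$ and $\abs{\O_{\d_j}\setminus\O}\to 0$, the problem reduces to $\liminf_j\int_\O f(x,\nabla v_j)\,dx\geq\int_\O f(x,\nabla u)\,dx$, which is the classical weak lower semicontinuity theorem for quasiconvex integrands of $p$-growth on the bounded domain $\O$, see~\cite{Morrey,Dac08}.

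For the recovery sequence, I would first handle $u\in C_c^{\infty}(\O;\R^m)$, which lies in every $H^{\rho,p,\d}_0(\O;\R^m)$, by taking the constant sequence $u_\d\equiv u$: Lemma~\ref{le:recovery}\,$(i)$ gives $D_{\rho_\d}u\to\nabla u$ uniformly on $\R^n$, so $D_{\rho_\d}u\to 0$ uniformly on $\O^c$ while $\norm{D_{\rho_\d}u}_{L^{\infty}(\R^n;\Rmn)}$ stays bounded for small $\d$; splitting $\O_\d=\O\cup(\O_\d\setminus\O)$, using the $p$-growth bound on the shrinking annulus, and applying dominated convergence on $\O$ (the integrands converge a.e.\ by continuity of $f(x,\cdot)$ and are dominated by a constant for small $\d$), I obtain $\Fcal_\d(u)\to\Fcal_0(u)$. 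For general $u\in W^{1,p}_0(\O;\R^m)$ I would approximate by $u_k\in C_c^{\infty}(\O;\R^m)$ in the $W^{1,p}$-norm, use that $\Fcal_0$ is continuous on its domain with respect to strong $W^{1,p}$-convergence (again by $p$-growth and continuity of $f$) so that $\Fcal_0(u_k)\to\Fcal_0(u)$, and close with the standard diagonalization argument for $\Gamma$-convergence (Attouch's lemma, cf.~\cite{DalMaso,Braides}) to produce a genuine recovery sequence $u_\d\to u$ in $L^p$ with $\limsup_{\d\to 0}\Fcal_\d(u_\d)\leq\Fcal_0(u)$. The stated convergence of minimizers then follows from Theorem~\ref{th:existence} and the fundamental theorem of $\Gamma$-convergence.

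I expect the $\liminf$-inequality to be the genuine obstacle: because $(D_{\rho_{\d_j}}u_{\d_j})_j$ is not a sequence of classical gradients of a single function, weak lower semicontinuity cannot be invoked directly, and the remedy — replacing $u_{\d_j}$ by $Q_{\rho_{\d_j}}\ast u_{\d_j}$ and checking that this convolution still converges weakly in $W^{1,p}$ to the same limit $u$ — is precisely what makes the quasiconvex vectorial case go through. The treatment of the $\d$-dependent integration domain $\O_\d$ and the diagonalization in the $\limsup$-part are, by contrast, routine.
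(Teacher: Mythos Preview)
Your proposal is correct and matches the paper's argument closely for equi-coercivity and the $\liminf$-inequality: both rely on Corollary~\ref{cor:compactness} and Lemma~\ref{le:compactness} for compactness, and both pass to the classical setting via $v_j=Q_{\rho_{\d_j}}\ast u_{\d_j}$ (Lemma~\ref{le:translation}) to invoke the standard quasiconvex lower semicontinuity theorem. Your observation that $Q_{\rho_{\d_j}}$ is an approximate identity, giving $v_j\to u$ strongly in $L^p$, is a slight sharpening of the paper's reasoning (which only argues weak $W^{1,p}$-convergence from the support condition $\supp v_j\subset\overline{\O_1}$).

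The one place you take a longer route is the recovery sequence. The paper uses the constant sequence $u_\d\equiv u$ directly for \emph{every} $u\in W^{1,p}_0(\O;\R^m)$, invoking Lemma~\ref{le:recovery}\,$(iii)$, which already gives $D_{\rho_\d}u\to\nabla u$ in $L^p(\R^n;\Rmn)$ for Sobolev functions; dominated convergence then finishes in one step. Your two-stage construction (first $C_c^\infty$ via Lemma~\ref{le:recovery}\,$(i)$, then density plus Attouch diagonalization) is correct but unnecessary here, since Part~$(iii)$ of the same lemma already handles the general case.
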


\begin{proof}
Let $(\d_j)_j \subset (0,1]$ be a sequence converging to 0 as $j \to \infty$. \smallskip

\textit{Equi-coercivity:} By the growth bound on $f$ from below and Corollary~\ref{cor:compactness}, we deduce that there are constants $c', C'>0$ such that
\[
\Fcal_{\d_j}(u)\geq c' \norm{u}_{H^{\sigma,p}(\R^n)} -C'
\]
for all $j \in \N$ and $u \in H^{\rho,p,\d_j}_0(\Omega;\R^m)$. This yields the equi-coercivity, given the compact embedding of $H^{\sigma,p}(\R^n)$ into $L_{\rm loc}^p(\R^n)$, cf.~\eqref{embeddingBessel}. \smallskip

\color{black}
\textit{Liminf-inequality:} Let $(u_j)_j \subset L^p(\R^n;\R^m)$ with $u_j \to u$ in $L^p(\R^n;\R^m)$. Assuming without loss of generality that
\[
\liminf_{j \to \infty} \Fcal_{\d_j} (u_j) = \lim_{j \to \infty} \Fcal_{\d_j}(u_j) < \infty,
\]
we have that $u_j \in H^{\rho,p,\d_j}_0(\Omega;\R^m)$ for each $j \in \N$ and 
\begin{align*}
\sup_{j \in \N}\norm{D_{\rho_{\d_j}}u_j}_{L^p(\Omega_{\d_j};\R^m)}< \infty,
\end{align*} 
due to the lower bound on $f$. 
Lemma~\ref{le:compactness} therefore yields a $u \in W^{1,p}_0(\Omega;\R^m)$ (extended to $\R^n$ as zero) such that $D_{\rho_{\d_j}}u_j \weakto \nabla u $ in $L^p(\R^n;\Rmn)$. If we use translation operators as in Lemma~\ref{le:translation} to define 
\begin{align*}
v_j:=\Qcal_{\rho_{\d_j}}u_j = Q_{\rho_{\d_j}}\ast u_j \quad \text{ for $j \in \N$,}
\end{align*} 
then $(v_j)_j$ is a bounded sequence in $W^{1,p}(\R^n;\R^m)$ with $\nabla v_j = D_{\rho_{\d_j}}u_j \weakto \nabla u$ in $L^p(\R^n;\Rmn)$ as $j \to \infty$. Consequently, it even holds that $v_j \weakto u$ in $W^{1,p}(\R^n;\R^m)$, considering that the functions $v_j$ are zero outside of $\Omega_1$ for each $j \in \N$.
\color{black} A standard lower semicontinuity result for functionals with quasiconvex integrands (cf.~\cite[Theorem~8.11]{Dac08}) then implies
\begin{align*}
\liminf_{j \to\infty} \Fcal_{\d_j}(u_j)&= \liminf_{j \to \infty} \int_{\Omega_{\d_j}} f(x,D_{\rho_{\d_j}}u_j)\,dx \\
&\geq \liminf_{j \to \infty}  \int_{\Omega} f(x,D_{\rho_{\d_j}}u_j)\,dx -C\abs{\Omega_{\d_j}\setminus \Omega}\\
&=\liminf_{j \to \infty} \int_{\Omega} f(x,\nabla v_j)\,dx\geq  \int_{\Omega}f(x,\nabla u)\,dx = \Fcal_0(u),
\end{align*}
which is the desired liminf-inequality.
\smallskip

\textit{Recovery sequence:} Without loss of generality, consider $u \in W^{1,p}_0(\Omega;\R^m)$. Then, we infer from Lemma~\ref{le:recovery}\,$(iii)$ that $u \in H^{\rho,p,\d_j}_0(\Omega;\R^m)$ for all $j \in \N$ with $D_{\rho_{\d_j}}u \to \nabla u$ in $L^p(\R^n;\Rmn)$ as $j\to \infty$. The upper bound on $f$ enables the application of Lebesgue's dominated convergence theorem to find
\[
\lim_{j \to \infty}\Fcal_{\d_j}(u) = \lim_{j \to \infty}\int_{\Omega_{\d_j}}f(x,D_{\rho_{\d_j}}u)\,dx = \int_{\Omega}f(x,\nabla u)\,dx = \Fcal_0(u).
\]
\color{black}
This shows that the constant sequence is a suitable recovery sequence.
\end{proof}

\begin{remark}
a) Under the assumptions of Theorem~\ref{th: Gamma convergence to 0}, for every $\d \in (0,1]$, the functional $\Fcal_\d$ admits a minimizer according to Theorem~\ref{th:existence}. \color{black} Therefore, standard properties of $\Gamma$-convergence imply that these minimizers converge, up to subsequence, to a minimizer of $\Fcal_0$ as $\d \to 0$. 

Referring to the literature, a closely related $\Gamma$-convergence result involving similar nonlocal gradients in the case $m=1$ can be found in \cite[Theorem~1.7]{MeS}. However, the latter does not feature the equi-coercivity required for the convergence of minimizers.
\smallskip

b) Since the definition of $D_{\rho_\d}u$ on $\O_\d$ only depends on the values of $u$ in $\O_{2\d}$, the Dirichlet condition in $\O^c$ can be equivalently replaced by prescribing zero values in $\O_{2\d}\setminus \O$. We remark that the papers \cite[Theorem~6.1]{BeCuMC23b} and \cite[Corollary 2]{CuKrSc23} on finite-horizon fractional gradients use a slightly different convention by considering the gradients on $\Omega$ and requiring Dirichlet conditions in $\O_\d \setminus \O_{-\d}$. Clearly, both settings are equivalent by a suitable renaming of the domain. The reason for our choice is that only the setting used here is meaningful for both limit passages $\delta \to 0$ and $\delta\to \infty$. 

\smallskip

\color{black} c) Theorem~\ref{th: Gamma convergence to 0} can readily be extended to non-zero complementary values,  that is, to admissible functions in the spaces $g+H^{\rho,p,\d}_0(\O;\R^m)$ for any given $g \in W^{1,p}(\R^n;\R^m)$. Indeed, since $g \in H^{\rho,p,\d}(\O;\R^m)$ for all $\d >0$ with 
\[
\norm{D_{\rho_\d}g}_{L^p(\R^n;\Rmn)} \leq \norm{Q_{\rho_\d}}_{L^1(\Rn)}\norm{\nabla g}_{L^p(\R^n;\Rmn)}=\norm{\nabla g}_{L^p(\R^n;\Rmn)},
\]
the argument follows through in the same manner with the domain of the $\Gamma$-limit being $g+W^{1,p}_0(\O;\R^m)$.
\end{remark}

\begin{example}
By applying Theorem~\ref{th: Gamma convergence to 0} to the kernels of Example~\ref{ex:kernels}, we obtain the localization of functionals as in \eqref{eq:Fcald} with nonlocal gradients associated to the following scaled kernels: \smallskip

a) For $\rho$ as in Example~\ref{ex:kernels}\,a), one finds that
\[
\rho_\d (x) = \d^{-n} \frac{w(x/\d)}{\abs{x/\d}^{n+s-1}} = \d^{s-1} \frac{w(x/\d)}{\abs{x}^{n+s-1}}, \quad x\in \R^n\setminus \{0\}.
\]
These scaled kernels coincide (up to a constant) with finite-horizon fractional gradients $D^s_\d$ from \cite{BeCuMC23,BeCuMC23b,CuKrSc23, KrS24}. Theorem~\ref{th: Gamma convergence to 0} then complements the localization result for $s \uparrow 1$ in \cite[Theorem~7]{CuKrSc23}. \smallskip

b) The scaled versions of the kernels $\rho$ in Example~\ref{ex:kernels}\,b) read as
\[
\rho_\d (x) = \d^{-n} \frac{w(x/\d)\log^\kappa(\d/\abs{x})}{\abs{x/\d}^{n+s-1}} = \d^{s-1} \frac{w(x/\d)(\log(\d)-\log(\abs{x}))^\kappa}{\abs{x}^{n+s-1}}, \quad x\in \R^n\setminus \{0\}.
\]

c) With $\rho$ as in Example~\ref{ex:kernels}\,c), the scaled kernels are given by
\[
\rho_\d (x) = \d^{-n} \frac{w(x/\d)}{\abs{x/\d}^{n+s(\abs{x/\d})-1}} = \d^{s(\abs{x/\d})-1} \frac{w(x/\d)}{\abs{x}^{n+s(\abs{x/\d})-1}}, \quad x\in \R^n\setminus \{0\}.
\]
\end{example}

%%%%%%%%%%%%%%%%%%%% SECTION 4 %%%%%%%%%%%%%%%%%%%%%%%%

\color{black}
\section{$\Gamma$-convergence $\d \to \infty$} \label{se: convergence in infinity}
We focus now on the asymptotics of the nonlocal gradient as the horizon $\d$ diverges to infinity.
As proven below, the associated limiting object is the Riesz fractional gradient. While this is to be expected for finite-horizon fractional gradients, surprisingly, the same holds when starting from any general nonlocal gradient within our setting. 
\color{black} This section is structured in parallel to Section~\ref{se: localization}, showing first the convergence of the nonlocal gradients to the Riesz fractional gradient, then providing a uniform compactness result, and finally, proving the $\Gamma$-convergence of the associated energy functionals. 

Let $\rho$ again be a non-negative radial kernel that satisfies \ref{itm:h0}-\ref{itm:upper} and \eqref{eq:normalized}, and assume throughout that $p \in (1,\infty)$ and $\O$ is a bounded Lipschitz domain. 
The lack of integrability of the fractional kernel on $\Rn$  calls for a different scaling procedure compared with Section~\ref{se: localization}.
%the scaling procedure considered here differs from the one in Section \ref{se: localization}.
\color{black} Precisely, for $\d \in (1/\epsilon,\infty)$ (so that $\overline{\rho}(1/\d)\not=0$), we now define 
\begin{align}\label{eq:kernelrhodelta}
\rho_\d(x):=\overline{\rho}\left(\frac{1}{\d}\right)^{-1}\rho\left(\frac{x}{\d}\right),
\end{align}
which corresponds to \eqref{eq:scaledprelim} with $c_\d:=\overline{\rho}(1/\d)^{-1}$. In this way, the values of $\rho_\delta$ on the unit sphere $\partial B_1(0)$ are normalized to $1$ for any $\delta$. In addition, we require that these kernels converge pointwise on $\R^n\setminus\{0\}$ as $\d \to \infty$, and set 
\begin{align}\label{eq:convergenceassumption}
\rho_{\infty}(x) := \lim_{\d \to \infty}\rho_\d(x)=\lim_{\d \to \infty}\overline{\rho}\left(\frac{1}{\d}\right)^{-1}\rho\left(\frac{x}{\d}\right), \quad x\in \R^n\setminus\{0\}.
\end{align}
With the scaling~\eqref{eq:kernelrhodelta}, the kernel function $Q_{\rho_\d}$ and its Fourier transform satisfy
\begin{equation}\label{eq:qrhohatinfty}
Q_{\rho_\d} = \overline{\rho}\left(\frac{1}{\d}\right)^{-1}Q_\rho\left(\frac{\,\cdot\,}{\d}\right) \quad \text{and} \quad \widehat{Q}_{\rho_\d}= \d^n\overline{\rho}\left(\frac{1}{\d}\right)^{-1} \widehat{Q}_{\rho}(\d \, \cdot\,). 
\end{equation}

Let us point out that our chosen scaling is, up to a constant, the only relevant one. Indeed, if there is a sequence $(\bar{c}_\d)_\d$ of positive reals such that $(\bar{c}_\d\rho(\cdot/\d))_\d$ converges pointwise as $\d \to \infty$, then for $x\in\partial B_1(0)$,
\[
\lim_{\d \to \infty}\bar{c}_\d \rho\left(\frac{x}{\d}\right) = \lim_{\d \to \infty} \bar{c}_\d\overline{\rho}\left(\frac{1}{\d}\right) =: \bar{c}.
\]
Therefore, we obtain for all $x \in \R^n \setminus\{0\}$ that
\[
\lim_{\d \to \infty}\bar{c}_\d \rho\left(\frac{x}{\d}\right)=\lim_{\d \to \infty} \bar{c}_\d\overline{\rho}\left(\frac{1}{\d}\right)\overline{\rho}\left(\frac{1}{\d}\right)^{-1}\rho\left(\frac{x}{\d}\right)=\bar{c}\rho_\infty(x).
\]

\color{black}

\color{black}
\subsection[Convergence of nonlocal gradients as $\d \to \infty$]{Convergence of nonlocal gradients as $\d \to \infty$}

This section is about establishing that the scaled nonlocal gradients converge to the Riesz fractional gradient as $\d \to \infty$. We commence with some bounds on $\rho_\d$ from~\eqref{eq:kernelrhodelta} and the limit kernel $\rho_{\infty}$ that will be used repeatedly later. Recall that $\eps>0$ is as in \ref{itm:h0}-\ref{itm:upper}, and that $\sigma, \gamma$ with $0<\sigma\leq \gamma<1$ are the parameters appearing in the hypotheses \ref{itm:lower} and \ref{itm:upper}, respectively. 
\color{black}
\begin{lemma}\label{le:wedge}
There exist constants $C,c>0$ such that for every $\d > 1/\epsilon$ and all $x\in B_{\d\epsilon}(0)\setminus\{0\}$,
\begin{equation}\label{eq:rhodwedge}
c\min\left\{\frac{1}{\abs{x}^{n+\sigma-1}},\frac{1}{\abs{x}^{n+\gamma-1}}\right\} \leq \rho_\d(x) \leq C\max\left\{\frac{1}{\abs{x}^{n+\sigma-1}},\frac{1}{\abs{x}^{n+\gamma-1}}\right\}. 
\end{equation}

In particular, it holds for all $x\in \R^n \setminus\{0\}$ that
\begin{align*} %\label{eq:rhodwedge2}
c\min\left\{\frac{1}{\abs{x}^{n+\sigma-1}},\frac{1}{\abs{x}^{n+\gamma-1}}\right\} \leq \rho_\infty(x) \leq C\max\left\{\frac{1}{\abs{x}^{n+\sigma-1}},\frac{1}{\abs{x}^{n+\gamma-1}}\right\}.
\end{align*}
\end{lemma}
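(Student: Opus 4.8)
The plan is to reduce everything to the two almost-monotonicity hypotheses \ref{itm:lower} and \ref{itm:upper} by a direct case analysis. First I would fix $\d>1/\epsilon$ and $x\in B_{\d\epsilon}(0)\setminus\{0\}$ and set $r:=\abs{x}/\d$; the point of the restriction $x\in B_{\d\epsilon}(0)$ is precisely that then $r\in(0,\epsilon)$, while $\d>1/\epsilon$ (cf.\ \ref{itm:h0}) gives $1/\d\in(0,\epsilon)$, so both arguments lie in the range where \ref{itm:lower} and \ref{itm:upper} are in force. From \eqref{eq:kernelrhodelta} one has $\rho_\d(x)=\overline{\rho}(r)/\overline{\rho}(1/\d)$, which I would rewrite, with the shorthand $g_\beta(t):=t^{\,n+\beta-1}\overline{\rho}(t)$ and using $r\d=\abs{x}$, as
\[
\rho_\d(x)=\frac{g_\sigma(r)}{g_\sigma(1/\d)}\,\abs{x}^{-(n+\sigma-1)}=\frac{g_\gamma(r)}{g_\gamma(1/\d)}\,\abs{x}^{-(n+\gamma-1)}.
\]
By \ref{itm:lower} the function $g_\sigma$ is almost decreasing on $(0,\epsilon)$ and by \ref{itm:upper} the function $g_\gamma$ is almost increasing on $(0,\epsilon)$, so in each of these two identities one of the quotients $g_\beta(r)/g_\beta(1/\d)$ is bounded above and the other bounded below by a constant depending only on $\rho$, according to whether $r\leq 1/\d$ or $r\geq 1/\d$.

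Then I would split into the cases $\abs{x}\leq 1$ (i.e.\ $r\leq 1/\d$) and $\abs{x}\geq 1$ (i.e.\ $r\geq 1/\d$). In the first case, $g_\sigma$ almost decreasing gives $g_\sigma(r)/g_\sigma(1/\d)\geq c$ and $g_\gamma$ almost increasing gives $g_\gamma(r)/g_\gamma(1/\d)\leq C$, so $c\,\abs{x}^{-(n+\sigma-1)}\leq\rho_\d(x)\leq C\,\abs{x}^{-(n+\gamma-1)}$; and since $\abs{x}\leq 1$ and $\sigma\leq\gamma$, the power $\abs{x}^{-(n+\sigma-1)}$ is the minimum and $\abs{x}^{-(n+\gamma-1)}$ the maximum of the two, which is exactly the asserted two-sided bound. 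In the second case the same two hypotheses, now with the roles of $r$ and $1/\d$ exchanged, give $g_\sigma(r)/g_\sigma(1/\d)\leq c^{-1}$ and $g_\gamma(r)/g_\gamma(1/\d)\geq C^{-1}$, hence $C^{-1}\abs{x}^{-(n+\gamma-1)}\leq\rho_\d(x)\leq c^{-1}\abs{x}^{-(n+\sigma-1)}$; for $\abs{x}\geq 1$ the minimum and maximum of the two powers are interchanged, so again this matches \eqref{eq:rhodwedge}. Taking the worst constants over the two cases yields \eqref{eq:rhodwedge} with $c,C$ depending only on $\rho,n,\sigma,\gamma$.

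Finally, for the bound on $\rho_\infty$ I would argue pointwise: for fixed $x\in\R^n\setminus\{0\}$ and any $\d>\max\{1/\epsilon,\abs{x}/\epsilon\}$ one has $x\in B_{\d\epsilon}(0)$, so \eqref{eq:rhodwedge} holds with $\d$-independent constants; letting $\d\to\infty$ and invoking the standing pointwise convergence $\rho_\d(x)\to\rho_\infty(x)$ from \eqref{eq:convergenceassumption} transfers the estimate to $\rho_\infty(x)$, and since $x$ was arbitrary this gives the second display. I expect no serious obstacle here: the whole argument is elementary, and the only thing requiring care is the bookkeeping — tracking the direction of each almost-monotonicity inequality, pairing it with the correct (upper or lower) estimate, and checking that the exponent $n+\sigma-1$ yields the minimum of the two powers when $\abs{x}\leq 1$ and the maximum when $\abs{x}\geq 1$ (with $n+\gamma-1$ the other way around).
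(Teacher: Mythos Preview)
Your proof is correct and follows essentially the same approach as the paper's: both arguments reduce to the almost-monotonicity hypotheses \ref{itm:lower} and \ref{itm:upper}, compare $\overline{\rho}$ at the two points $\abs{x}/\d$ and $1/\d$ (both in $(0,\epsilon)$), and split into the cases $\abs{x}\leq 1$ and $\abs{x}\geq 1$. The only cosmetic difference is that the paper first packages \ref{itm:lower} and \ref{itm:upper} into a single two-sided comparison inequality for $\overline{\rho}$ and then specializes it, whereas you introduce the auxiliary functions $g_\beta(t)=t^{n+\beta-1}\overline{\rho}(t)$ and bound the ratios $g_\beta(r)/g_\beta(1/\d)$ directly; the content is identical.
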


\begin{proof}
Observe first that by \ref{itm:lower} and \ref{itm:upper}, there are constants $c', C'>0$ such that 
\begin{equation}\label{eq:comparingrho}
c'\left(\frac{t}{r}\right)^{n+\gamma-1}\overline{\rho}(t) \leq \overline{\rho}(r)\leq C' \left(\frac{t}{r}\right)^{n+\sigma-1}\overline{\rho}(t)
\end{equation}
for all $t,r\in (0,\eps)$ with $r\geq t$. 

Let $x\in B_{\delta\epsilon}(0)$. If $\abs{x} \geq 1$, we can apply~\eqref{eq:comparingrho} with the choice $r=\abs{x}/\d$ and $t=1/\d$ to find
\[
\frac{c'}{\abs{x}^{n+\gamma-1}}\leq \overline{\rho}\left(\frac{1}{\d}\right)^{-1}\overline{\rho}\left(\frac{\abs{x}}{\d}\right)\leq \frac{C'}{\abs{x}^{n+\sigma-1}}.
\]
 As for the case $0<\abs{x} \leq 1$, we resort to \eqref{eq:comparingrho} as well, but take $r=1/\d$ and $t=\abs{x}/\d$ instead, which gives
\[
\frac{1}{C'\abs{x}^{n+\sigma-1}}\leq \overline{\rho}\left(\frac{1}{\d}\right)^{-1}\overline{\rho}\left(\frac{\abs{x}}{\d}\right)\leq \frac{1}{c'\abs{x}^{n+\gamma-1}}.
\]
We conclude that \eqref{eq:rhodwedge} holds for suitably chosen constants $c, C$.
\end{proof}
As the next lemma shows, $\rho_{\infty}$ must be a fractional kernel, no matter the specific choice of $\rho$. 
This finding is a key ingredient for proving  that only Riesz fractional gradients can be obtained as the limit of increasing horizon nonlocal gradients with the scaled sequence of kernels $\rho_\d$.

\color{black}
\begin{lemma}[Limit kernel $\rho_\infty$ is fractional] \label{le: fractional kernel as limit}
	There is $s_\infty \in [\sigma,\gamma]$ such that $\rho_\infty$ in~\eqref{eq:convergenceassumption} satisfies
	\[
	\rho_\infty(x)=\rho^{s_\infty}(x):= \frac{1}{\abs{x}^{n+s_\infty-1}} \quad \text{for all $x \in \R^n\setminus\{0\}$.}
	\]
\end{lemma}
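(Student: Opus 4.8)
The plan is to pass from the pointwise convergence $\rho_\d\to\rho_\infty$ to a multiplicativity property of the radial representation $g:=\overline{\rho_\infty}$ of the limit kernel, and then to upgrade multiplicativity to a power law using the uniform bounds from Lemma~\ref{le:wedge}. First I would note that $g(1)=1$ by the normalization built into~\eqref{eq:convergenceassumption}, and $g>0$ on $(0,\infty)$ by the lower bound in Lemma~\ref{le:wedge}. For the key step, fix $s,t>0$; for $\d$ large enough write
\[
\overline{\rho}\Bigl(\tfrac1\d\Bigr)^{-1}\overline{\rho}\Bigl(\tfrac{st}{\d}\Bigr)=\Bigl[\overline{\rho}\Bigl(\tfrac1\d\Bigr)^{-1}\overline{\rho}\Bigl(\tfrac{s}{\d}\Bigr)\Bigr]\Bigl[\overline{\rho}\Bigl(\tfrac{s}{\d}\Bigr)^{-1}\overline{\rho}\Bigl(\tfrac{st}{\d}\Bigr)\Bigr].
\]
The first bracket converges to $g(s)$ directly from~\eqref{eq:convergenceassumption}; in the second bracket I substitute $\d'=\d/s$, so that it equals $\overline{\rho}(1/\d')^{-1}\overline{\rho}(t/\d')\to g(t)$ as $\d'\to\infty$. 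Letting $\d\to\infty$ then yields the multiplicativity relation $g(st)=g(s)g(t)$ for all $s,t>0$.

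Next I would exploit that, by Lemma~\ref{le:wedge}, $g$ is bounded from above and below by positive constants on every compact subset of $(0,\infty)$. Hence $h(x):=\log g(e^x)$ solves Cauchy's functional equation $h(x+y)=h(x)+h(y)$ on $\R$ and is locally bounded, so it must be linear; writing the slope as $-(n+s_\infty-1)$ gives $g(t)=t^{-(n+s_\infty-1)}$, that is, $\rho_\infty(x)=\abs{x}^{-(n+s_\infty-1)}$. To pin down the exponent range, I would compare $g(t)=t^{-(n+s_\infty-1)}$ with the two-sided bound of Lemma~\ref{le:wedge}: for $t>1$ one has $\min\{t^{-(n+\sigma-1)},t^{-(n+\gamma-1)}\}=t^{-(n+\gamma-1)}$ and the maximum is $t^{-(n+\sigma-1)}$, so the inequalities $c\,t^{-(n+\gamma-1)}\le t^{-(n+s_\infty-1)}\le C\,t^{-(n+\sigma-1)}$ survive the limit $t\to\infty$ only if $\sigma\le s_\infty\le\gamma$.

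I do not expect a genuine obstacle here; the subtle points are minor. The reindexing $\d'=\d/s$ in the multiplicativity step is legitimate precisely because~\eqref{eq:convergenceassumption} postulates the pointwise limit at \emph{every} nonzero argument, so both sub-limits exist; and the passage from multiplicativity to a power law rests on the classical rigidity of solutions of Cauchy's equation, for which the local boundedness supplied by Lemma~\ref{le:wedge} is exactly the required hypothesis. The only thing to be careful about is to phrase everything in terms of the radial representation and to keep track of which of $t^{-(n+\sigma-1)}$, $t^{-(n+\gamma-1)}$ is the minimum and which is the maximum in the two regimes $t<1$ and $t>1$.
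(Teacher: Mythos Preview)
Your proposal is correct and follows essentially the same approach as the paper: establish multiplicativity of $\overline{\rho}_\infty$ via the same reindexing $\d'=\d/s$, then use local boundedness from Lemma~\ref{le:wedge} to force a power law, and finally read off $s_\infty\in[\sigma,\gamma]$ from the two-sided bounds. The only cosmetic difference is that the paper cites a reference for the passage from multiplicativity to a power function, whereas you spell out the equivalent Cauchy functional equation argument for $h(x)=\log g(e^x)$.
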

\begin{proof}
Our argument relies on proving that $\overline{\rho}_{\infty}$ is a multiplicative function. To this aim, we consider $r,t >0$ and compute that
	\begin{align*}
		\overline{\rho}_\infty(r\cdot t) &= \lim_{\d\to\infty}\overline{\rho}\left(\frac{1}{\d}\right)^{-1}\overline{\rho}\left(\frac{r \cdot t}{\d}\right) = \lim_{\d\to\infty}\overline{\rho}\left(\frac{1}{\d}\right)^{-1}\overline{\rho}\left(\frac{r}{\d}\right)\overline{\rho}\left(\frac{r}{\d}\right)^{-1}\overline{\rho}\left(\frac{r \cdot t}{\d}\right)\\
		&=\overline{\rho}_\infty(r) \lim_{\d\to\infty}\overline{\rho}\left(\frac{1}{\d/r}\right)^{-1}\overline{\rho}\left(\frac{ t}{\d/r}\right) = \overline{\rho}_\infty(r)\overline{\rho}_\infty(t).
	\end{align*}
	Since $\overline{\rho}_\infty$ is also locally bounded away from 0 by Lemma~\ref{le:wedge}, we deduce that $\overline{\rho}_\infty$ must be a power function (cf.~\cite[Chapter 3, Proposition 6]{AcD89}). % \color{red} Why is $\overline{\rho}_\infty$ not the zero function? \color{olive} 
	Together with Lemma~\ref{le:wedge}, it follows therefore that $\rho_\infty(x)=1/\abs{x}^{n+s_\infty-1}$ for some $s_\infty \in [\sigma,\gamma]$.
\end{proof}
\begin{remark}\label{rem:rhobars}
The parameter $s_\infty$ associated to a limit kernel $\rho_\infty$ can be determined directly from $\rho$ 
 via the limit
		\[
		s_\infty=\log(\overline{\rho}_\infty(1/e))-n+1=\lim_{\d \to \infty} \log\left(\overline{\rho}(1/\d)^{-1}\overline{\rho}\left(1/(e\d)\right)\right)-n+1.
		\] 
%b) It is immediate to see that Lemma~\ref{le:wedge} and Lemma \ref{le: fractional kernel as limit} generalize also to the case $\sigma=0$.  %the parameters $\sigma=0=s_\infty$. 
%This observation could be useful in proving that the Riesz transform, which corresponds to the $0$-fractional gradient, can be recovered from a sequence of nonlocal gradients with finite horizon. However, this would require an alternative proof for Lemma~\ref{le:l1convergence}, %given that the integrability of the right-hand side in~\eqref{est45} fails
% which is beyond the scope of this work. \footnote{I would prefer to remove part b) of this remark here and leave the problem for future work to address. Let's discuss this together.} %\textcolor{blue}{**}In this case the sequence of gradients would converge to zero-fractional gradient, also known as the Riesz transform.
\end{remark}

\begin{example}\label{ex:sbar}
One observes that the kernels $\rho$ from Example~\ref{ex:kernels} satisfy \eqref{eq:convergenceassumption}, i.e., their rescaled versions converge pointwise. We identify the limiting fractional exponent $s_\infty$ for illustration:\smallskip

	\color{black} a) Let $\rho$ be as in Example~\ref{ex:kernels}\,a). Then, for $x\in \R^n\setminus\{0\}$, 
	\[
	\rho_\infty(x)=\lim_{\d \to \infty}\frac{\overline{w}(1/\d)^{-1}}{\d^{n+s-1}}\frac{w(x/\d)}{\abs{x/\d}^{n+s-1}}=\frac{1}{\abs{x}^{n+s-1}} =\rho^s(x),
	\]
which yields $s_\infty = s$. \smallskip 
	
	b) For the kernel $\rho$ of Example~\ref{ex:kernels}\,b), one obtains the same limit  as in a), that is, $\rho_\infty=\rho^s%\frac{1}{\abs{\, \cdot\, }^{n+s-1}}
	$, and hence, $s_\infty=s$. The detailed calculation reads
	\begin{align*}
		\rho_\infty(x)&=\lim_{\d \to \infty}\overline{w}(1/\d)^{-1}\frac{1}{\log^\kappa(\d)\d^{n+s-1}}\frac{w(x/\d)\log^\kappa(\d/\abs{x})}{\abs{x/\d}^{n+s-1}}=\frac{1}{\abs{x}^{n+s-1}}\lim_{\d \to \infty}\frac{\log^\kappa(\d/\abs{x})}{\log^\kappa(\d)}\\
		&=\frac{1}{\abs{x}^{n+s-1}}\lim_{\d \to \infty} \left(\frac{\log(\d)-\log(\abs{x})}{\log(\d)}\right)^\kappa=\frac{1}{\abs{x}^{n+s-1}}
	\end{align*}
for $x\in \R^n\setminus\{0\}$.\smallskip
	
	\color{black}
	c) In the case of $\rho$ from Example~\ref{ex:kernels}\,c), the limit fractional exponent becomes $s_\infty=s(0)$, as 
	\begin{align*}
		\rho_\infty(x)&=\lim_{\d \to \infty}\overline{w}(1/\d)^{-1}\frac{1}{\d^{n+s(1/\d)-1}}\frac{w(x/\d)}{\abs{x/\d}^{n+s(\abs{x}/\d)-1}}\\
		&=\lim_{\d \to \infty}\frac{1}{\d^{s(1/\d)-s(\abs{x}/\d)}\abs{x}^{n+s(\abs{x}/\d)-1}} =\frac{1}{\abs{x}^{n+s(0)-1}}
	\end{align*}
	for $x\in \R^n\setminus\{0\}$ shows. %\color{red} Add further kernels $\rho$
\end{example}

\color{black}
In the next step, we show that the nonlocal gradients converge to the fractional gradient induced by $\rho_\infty$ as $\d \to \infty$, see Proposition~\ref{prop:convergenceinfinite}.  The proof involves the following auxiliary result, which allows to control the integrability of the kernels during the limit passage.

\begin{lemma}\label{le:l1convergence}
It holds that 
\[
\rho_\d \,\min\{1,|\cdot|^{-1}\} \to \rho_{\infty}\, \min\{1,|\cdot|^{-1}\} \quad \text{in $L^1(\R^n)$ as $\d \to \infty$.}
\]
\end{lemma}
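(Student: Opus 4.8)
The plan is to upgrade the pointwise convergence $\rho_\d\to\rho_\infty$ from \eqref{eq:convergenceassumption} to $L^1$-convergence by a dominated convergence argument on bounded regions combined with a uniform tail estimate near the ``horizon''. Write $m(x):=\min\{1,\abs{x}^{-1}\}$, assume without loss of generality that $\epsilon\in(0,1)$ (shrinking $\epsilon$ preserves \ref{itm:h0}-\ref{itm:upper}), and fix $R>1$. For $\d$ large, split
\[
\norm{\rho_\d m-\rho_\infty m}_{L^1(\R^n)}\leq \int_{B_R(0)}\abs{\rho_\d m-\rho_\infty m}\,dx+\int_{B_R(0)^c}\rho_\d m\,dx+\int_{B_R(0)^c}\rho_\infty m\,dx .
\]

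For the first term, as soon as $\d>R/\epsilon$ one has $B_R(0)\subset B_{\d\epsilon}(0)$, so Lemma~\ref{le:wedge} gives the $\d$-independent bound $\rho_\d(x)m(x)\leq C\max\{\abs{x}^{-(n+\sigma-1)},\abs{x}^{-(n+\gamma-1)}\}$, which is integrable on $B_R(0)$ since $\gamma<1$; the same bound majorizes $\rho_\infty m$. Since $\rho_\d m\to\rho_\infty m$ pointwise a.e.\ by \eqref{eq:convergenceassumption}, dominated convergence makes the first term vanish as $\d\to\infty$. The third term equals $\int_{\abs{x}\geq R}\abs{x}^{-(n+s_\infty)}\,dx\leq CR^{-\sigma}$ by Lemma~\ref{le: fractional kernel as limit} and $s_\infty\geq\sigma$, hence is small for large $R$.

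The crux is the uniform smallness of the middle term. For $\d$ large enough that $\d\epsilon>R$, decompose $B_R(0)^c\cap\supp\rho_\d$ into the annuli $\{R\leq\abs{x}\leq\d\epsilon\}$ and $\{\d\epsilon\leq\abs{x}\leq\d\}$. On the first, Lemma~\ref{le:wedge} together with $\abs{x}\geq R\geq 1$ yields $\rho_\d(x)m(x)\leq C\abs{x}^{-(n+\sigma)}$, whose integral over $\{\abs{x}\geq R\}$ is at most $C'R^{-\sigma}$, uniformly in $\d$. On the second annulus, where Lemma~\ref{le:wedge} gives no information, I will instead use the crude pointwise bound $\rho_\d(x)=\overline{\rho}(1/\d)^{-1}\overline{\rho}(\abs{x}/\d)\leq M\,\overline{\rho}(1/\d)^{-1}$ with $M:=\sup_{[\epsilon,1]}\overline{\rho}<\infty$, combined with the decay $\overline{\rho}(1/\d)^{-1}\leq c^{-1}\d^{-(n+\sigma-1)}$ obtained from the almost-monotonicity in \ref{itm:lower} evaluated at a fixed point of $(0,\epsilon)$; since $\int_{\d\epsilon\leq\abs{x}\leq\d}\abs{x}^{-1}\,dx\lesssim\d^{n-1}$, this annulus contributes at most $C''\d^{-\sigma}\to 0$. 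Altogether $\limsup_{\d\to\infty}\int_{B_R(0)^c}\rho_\d m\,dx\leq C'R^{-\sigma}$.

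Combining the three estimates gives $\limsup_{\d\to\infty}\norm{\rho_\d m-\rho_\infty m}_{L^1(\R^n)}\leq (C'+C)R^{-\sigma}$ for every $R>1$, and letting $R\to\infty$ concludes the proof. The main obstacle is precisely the annulus $\{\d\epsilon\leq\abs{x}\leq\d\}$: Lemma~\ref{le:wedge} does not reach it, its Lebesgue measure grows like $\d^n$, and only the genuine decay of the normalizing factor $\overline{\rho}(1/\d)^{-1}$ extracted from \ref{itm:lower} keeps this mass concentrating near the horizon from surviving in the limit.
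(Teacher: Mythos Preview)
Your proof is correct and follows essentially the same approach as the paper: dominated convergence via the bound of Lemma~\ref{le:wedge} on the region where that lemma applies, together with the crude estimate $\overline{\rho}\leq M$ on $[\epsilon,1]$ and the decay $\overline{\rho}(1/\d)^{-1}\lesssim\d^{-(n+\sigma-1)}$ from \ref{itm:lower} on the outer annulus $\{\d\epsilon\leq|x|\leq\d\}$. The only structural difference is that the paper avoids the auxiliary radius $R$ by observing that the dominating function $\min\{1,|\cdot|^{-1}\}\max\{|\cdot|^{-(n+\sigma-1)},|\cdot|^{-(n+\gamma-1)}\}$ from Lemma~\ref{le:wedge} is already in $L^1(\R^n)$, so one can apply dominated convergence directly to $\mathbbm{1}_{B_{\d\epsilon}(0)}\rho_\d\,m$ on all of $\R^n$ and handle only the single remainder $\mathbbm{1}_{B_{\d\epsilon}(0)^c}\rho_\d\,m$.
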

\begin{proof}
We already know the pointwise a.e.~convergence by \eqref{eq:convergenceassumption}, and it follows from \eqref{eq:rhodwedge} that
\begin{align*}\label{est45}
\mathbbm{1}_{B_{\d\epsilon}(0)} \rho_\d \min\{1,|\cdot|^{-1}\} \leq C\min\{1,|\cdot|^{-1}\}\max\left\{\frac{1}{\abs{\,\cdot\,}^{n+\sigma-1}},\frac{1}{\abs{\,\cdot\,}^{n+\gamma-1}}\right\}.
\end{align*}
Since the right-hand side is integrable, Lebesgue's dominated convergence theorem implies
\[
\mathbbm{1}_{B_{\d\epsilon}(0)}\rho_\d \, \min\{1,|\cdot|^{-1}\} \to \rho_{\infty}\, \min\{1,|\cdot|^{-1}\} \quad \text{in $L^1(\R^n)$ as $\d \to \infty$.}
\]

It remains to show that
\begin{equation}\label{eq:remainder}
\mathbbm{1}_{B_{\d\epsilon}(0)^c}\rho_\d \min\{1,|\cdot|^{-1}\} \to 0 \quad \text{in $L^1(\R^n)$ as $\d \to \infty$.}
\end{equation}
Considering that, in light of \ref{itm:h1}, $\rho(\cdot/\d) \leq C$ in $B_{\d\epsilon}(0)^c$ for some $C>0$, and $\rho(\cdot/\d)=0$ on $B_\delta(0)^c$ by~\eqref{eq:normalized}, we find 
\[
\mathbbm{1}_{B_{\d\epsilon}(0)^c}\rho_\d \leq C\mathbbm{1}_{B_\d(0)\setminus B_{\d\epsilon}(0)}\overline{\rho}(1/\d)^{-1}  \leq C\mathbbm{1}_{B_\d(0)\setminus B_{1}(0)}\overline{\rho}(1/\d)^{-1}, 
\]
given that $\d \epsilon >1$. This yields
\begin{align}
\int_{B_{\d\epsilon/2}(0)^c}\rho_\d(x) \min\{1,\abs{x}^{-1}\}\,dx &\leq C\overline{\rho}\left(\frac{1}{\d}\right)^{-1} \int_{B_\d(0)\setminus B_1(0)} \abs{x}^{-1}\,dx \nonumber \\ &\leq \begin{cases}
C\overline{\rho}(1/\d)^{-1}(\d^{n-1}-1) &\text{if $n>1$}, \\
C\overline{\rho}(1/\d)^{-1}\log(\d) &\text{if $n=1$.}\label{rs}
\end{cases}
\end{align}
In either case, the expression in~\eqref{rs} converges to 0 as $\d \to \infty$ in view of \ref{itm:lower}, which gives rise to  \eqref{eq:remainder} and finishes the proof.
\end{proof}
As a consequence, we now obtain the convergence of the nonlocal gradients to the Riesz fractional gradient as $\d \to \infty$ in the case of Sobolev functions.

\begin{proposition}[Convergence to fractional gradient as $\d \to \infty$]\label{prop:convergenceinfinite} 
For any $u \in W^{1,p}(\R^n)$ it holds that
\[
D_{\rho_\d}u \to D_{\rho_{\infty}}u=D^{s_\infty}u \quad \text{in $L^p(\R^n;\R^n)$ as $\d \to \infty$.}
\]
\end{proposition}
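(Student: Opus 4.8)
The plan is to establish the convergence first for $\phi\in C_c^\infty(\R^n)$, with an explicit $\d$-uniform decay rate, and then extend it to arbitrary $u\in W^{1,p}(\R^n)$ by density. The density step cannot be carried out as in Lemma~\ref{le:recovery}\,$(iii)$ via Young's convolution inequality, since here $\norm{Q_{\rho_\d}}_{L^1(\R^n)}=\widehat{Q}_{\rho_\d}(0)=\d^n\overline{\rho}(1/\d)^{-1}\to\infty$; a Fourier-multiplier argument is used instead.

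\textbf{Smooth functions.} Fix $\phi\in C_c^\infty(\R^n)$ with $\supp\phi\subset B_{R_0}(0)$, and recall $D_{\rho_\infty}=D^{s_\infty}$ by Lemma~\ref{le: fractional kernel as limit}. After the substitution $z=x-y$ in the definition of the nonlocal gradient one writes
\[
D_{\rho_\d}\phi(x)-D^{s_\infty}\phi(x)=\int_{\R^n}\frac{\phi(x)-\phi(x-z)}{\abs z}\,\frac{z}{\abs z}\,\bigl(\rho_\d(z)-\rho_\infty(z)\bigr)\,dz,
\]
and since $\abs{\phi(x)-\phi(x-z)}/\abs z\le C_\phi\min\{1,\abs z^{-1}\}$ with $C_\phi=\Lip(\phi)+2\norm{\phi}_{L^\infty(\R^n)}$, the weight $\min\{1,\abs z^{-1}\}$ factors out and Lemma~\ref{le:l1convergence} gives
\[
\norm{D_{\rho_\d}\phi-D^{s_\infty}\phi}_{L^\infty(\R^n;\R^n)}\le C_\phi\,\bigl\|\rho_\d\min\{1,\abs{\,\cdot\,}^{-1}\}-\rho_\infty\min\{1,\abs{\,\cdot\,}^{-1}\}\bigr\|_{L^1(\R^n)}\to 0.
\]
To upgrade this to $L^p$-convergence I would add a $\d$-uniform tail estimate: for $\abs x\ge\max\{2,2R_0\}$, using the convolution form $D_{\rho_\d}\phi=Q_{\rho_\d}*\nabla\phi$ (cf.~\eqref{eq:translationsmooth}), the cancellation $\int_{\R^n}\nabla\phi=0$, and $\abs{\nabla Q_{\rho_\d}(z)}=\overline{\rho}_\d(\abs z)/\abs z\le C\abs z^{-(n+\sigma)}$ for $\abs z\ge1$ (uniformly in $\d$, by Lemma~\ref{le:wedge}), one obtains $\abs{D_{\rho_\d}\phi(x)}\le C\abs x^{-(n+\sigma)}$; the same bound holds for $D^{s_\infty}\phi$ because $\abs{\nabla Q_{\rho^{s_\infty}}(z)}=\abs z^{-(n+s_\infty)}\le\abs z^{-(n+\sigma)}$ for $\abs z\ge1$. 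Splitting $\R^n$ into $B_{R_1}(0)$ and its complement, the outer part of $\int\abs{D_{\rho_\d}\phi-D^{s_\infty}\phi}^p$ is bounded by $C\int_{R_1}^\infty r^{n-1-p(n+\sigma)}\,dr$, which is finite (as $p(n+\sigma)>n$) and tends to $0$ as $R_1\to\infty$ uniformly in $\d$, and the inner part is controlled by the uniform convergence; hence $D_{\rho_\d}\phi\to D^{s_\infty}\phi$ in $L^p(\R^n;\R^n)$.

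\textbf{Density via a uniform bound.} The new ingredient needed is a $\d$-uniform estimate $\norm{D_{\rho_\d}w}_{L^p(\R^n;\R^n)}\le C\norm{w}_{W^{1,p}(\R^n)}$ for all large $\d$. By \eqref{eq:drhofourier}, $\widehat{D_{\rho_\d}w}(\xi)=2\pi i\xi\,\widehat{Q}_{\rho_\d}(\xi)\widehat{w}(\xi)$, so $D_{\rho_\d}w=M_\d(D)(1-\Delta)^{1/2}w$ with $M_\d(\xi)=2\pi i\xi\,\widehat{Q}_{\rho_\d}(\xi)(1+4\pi^2\abs\xi^2)^{-1/2}$. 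Transferring Lemma~\ref{lem:bms} to $\rho_\d$ through the scaling $\widehat{Q}_{\rho_\d}=\d^n\overline{\rho}(1/\d)^{-1}\widehat{Q}_\rho(\d\,\cdot\,)$ from~\eqref{eq:qrhohatinfty}, and combining with Lemma~\ref{le:wedge} and~\ref{itm:lower}, one checks that $\widehat{Q}_{\rho_\d}(\xi)\le C\max\{\abs\xi^{\sigma-1},\abs\xi^{\gamma-1}\}$ for all $\xi\ne0$ uniformly in $\d$, so $\abs{M_\d(\xi)}\le C$; the derivative bounds $\abs{\partial^\alpha\widehat{Q}_{\rho_\d}(\xi)}\le C_\alpha\abs\xi^{-\abs\alpha}\widehat{Q}_{\rho_\d}(\xi)$ (inherited from~\eqref{eq:bms2} with $\d$-independent constants) then yield $\abs{\partial^\alpha M_\d(\xi)}\le C_\alpha\abs\xi^{-\abs\alpha}$, so the Mihlin--H\"ormander theorem makes the $M_\d$ uniform $L^p$-multipliers. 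Now for $u\in W^{1,p}(\R^n)$ and $\phi_k\in C_c^\infty(\R^n)$ with $\phi_k\to u$ in $W^{1,p}(\R^n)$,
\[
\norm{D_{\rho_\d}u-D^{s_\infty}u}_{L^p(\R^n;\R^n)}\le\norm{D_{\rho_\d}(u-\phi_k)}_{L^p}+\norm{D_{\rho_\d}\phi_k-D^{s_\infty}\phi_k}_{L^p}+\norm{D^{s_\infty}(\phi_k-u)}_{L^p};
\]
the first term is controlled by the uniform bound, the third by boundedness of $D^{s_\infty}\colon W^{1,p}(\R^n)\to L^p(\R^n;\R^n)$ (e.g.\ via $W^{1,p}(\R^n)\hookrightarrow H^{s_\infty,p}(\R^n)$, cf.\ Example~\ref{ex:Rieszfracgrad}), and the middle term by the smooth case, so choosing $k$ large and then $\d$ large finishes the argument.

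\textbf{Main difficulty.} I expect the crux to be the two $\d$-uniform estimates — the tail decay $\abs{D_{\rho_\d}\phi(x)}\lesssim\abs x^{-(n+\sigma)}$ and the uniform boundedness of $D_{\rho_\d}$ on $W^{1,p}(\R^n)$ — which are exactly what promotes the elementary pointwise/uniform convergence to convergence in $L^p$; both rest on the wedge bounds of Lemma~\ref{le:wedge} together with the Fourier estimates of Lemma~\ref{lem:bms}, and, in contrast to the localization regime, cannot be obtained from a naive Young-inequality argument since $\norm{Q_{\rho_\d}}_{L^1(\R^n)}$ diverges.
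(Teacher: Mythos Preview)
Your argument is correct, but it takes a considerably longer route than the paper's. The paper dispatches the whole proposition in two lines by invoking a single estimate (from \cite[Proposition~1]{EGM22}, cf.~also the proof of \cite[Proposition~3.5]{BMS24}):
\[
\norm{D_{\rho_\d}u-D_{\rho_{\infty}}u}_{L^p(\Rn;\Rn)}
\leq C \norm{u}_{W^{1,p}(\Rn)}\normb{(\rho_\d-\rho_\infty)\min\{1,|\cdot|^{-1}\}}_{L^1(\Rn)},
\]
valid for every $u\in W^{1,p}(\R^n)$, and then applies Lemma~\ref{le:l1convergence}. No separate treatment of smooth functions, no tail decay, no Mihlin--H\"ormander machinery.

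The key point you overlook is that a Young-type inequality \emph{does} work here---just not with the kernel $Q_{\rho_\d}$. The estimate above is obtained by splitting the difference quotient in the definition of the nonlocal gradient into a near part ($\abs{z}\le 1$, controlled by $\nabla u$ via the fundamental theorem of calculus) and a far part ($\abs{z}>1$, controlled by $u$), and applying Young's inequality to each with the weight $\rho_\d\min\{1,\abs{\,\cdot\,}^{-1}\}$. This is precisely the quantity whose uniform $L^1$-control is furnished by Lemma~\ref{le:l1convergence}, so the divergence of $\norm{Q_{\rho_\d}}_{L^1(\R^n)}$ is simply irrelevant.

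Your approach does have independent value: the uniform multiplier bound you establish for $M_\d$ is essentially the ``upper'' counterpart to the lower bound $\widehat{Q}_{\rho_\d}(\xi)\ge C\langle\xi\rangle^{\sigma-1}$ derived in the proof of Proposition~\ref{prop:poincare2}, and together they give two-sided control of $D_{\rho_\d}$ between Bessel-potential spaces uniformly in $\d$. But for the present proposition this is overkill. One minor caveat: your tail-decay step cites Lemma~\ref{le:wedge} for the bound $\overline{\rho}_\d(\abs{z})\le C\abs{z}^{-(n+\sigma-1)}$ on all of $\abs{z}\ge 1$, whereas that lemma only covers $\abs{z}<\d\epsilon$; the remaining annulus $[\d\epsilon,\d]$ needs the separate estimate from the proof of Lemma~\ref{le:l1convergence} (via \ref{itm:h1} and \ref{itm:lower}), which you should cite explicitly.
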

\begin{proof}
In light of \cite[Proposition~1]{EGM22} (cf.~also the proof of \cite[Proposition~3.5]{BMS24}), we deduce the estimate
\[
\norm{D_{\rho_\d}u-D_{\rho_{\infty}}u}_{L^p(\Rn;\Rn)}%= \norm{D_{\rho_\d-\rho_\infty}u}_{L^p(\Rn;\Rn)}
\leq C \norm{u}_{W^{1,p}(\Rn)}\normb{(\rho_\d-\rho_\infty)\min\{1,|\cdot|^{-1}\}}_{L^1(\Rn)},
\]
for all $u\in W^{1,p}(\R^n)$, and the statement follows via Lemma~\ref{le:l1convergence}.
\end{proof}

\subsection[Compactness uniformly in $\delta \in (1/\epsilon,\infty)$]{Compactness uniformly in $\delta \in (1/\epsilon,\infty)$}

Next, we address the issue of compactness with the goal of deriving a counterpart of Lemma~\ref{le:compactness} in the setting of diverging horizon. This relies on the following analogue of the Poincar\'e-type inequality in Corollary~\ref{cor:compactness}. The proof is based on the comparison of the scaled nonlocal gradients  $D_{\rho_\d}$ with a suitable finite-horizon fractional gradient. 

\begin{proposition}\label{prop:poincare2}
There exists a constant $C=C(\rho,n,\Omega,p)>0$ such that
	\[
	\norm{u}_{H^{\sigma,p}(\R^n)} \leq C\norm{D_{\rho_\delta}u}_{L^p(\Omega_\d;\R^n)} \quad \text{for all $u \in H^{\rho,p,\d}_0(\Omega)$ and $\delta \in (1/\epsilon,\infty)$.}
	\]
\end{proposition}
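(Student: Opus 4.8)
The plan is to adapt the Fourier--multiplier argument from the proof of Theorem~\ref{th:comparison}, but now comparing $D_{\rho_\d}$ with a \emph{fixed} finite-horizon fractional gradient rather than with another scaled copy of $D_\rho$. Fix the truncated Riesz kernel $\tilde\rho:=\rho^\sigma\mathbbm{1}_{B_1(0)}$ (or a smoothly cut-off version as in Example~\ref{ex:kernels}\,a) with $s=\sigma$); since $\sigma<1$ it belongs to $L^1(\R^n)$, it has compact support, and it satisfies \ref{itm:h0}-\ref{itm:upper} with lower and upper exponents both equal to $\sigma$. Applying \eqref{estimateHsigmap} to $\tilde\rho$ yields a constant with $\norm{u}_{H^{\sigma,p}(\R^n)}\le C\norm{D_{\tilde\rho}u}_{L^p(\R^n;\R^n)}$ for every $u\in H^{\tilde\rho,p}_0(\Omega)$, so it suffices to prove
\[
\norm{D_{\tilde\rho}\varphi}_{L^p(\R^n;\R^n)}\le C\norm{D_{\rho_\d}\varphi}_{L^p(\R^n;\R^n)}\qquad\text{for all }\varphi\in C_c^\infty(\R^n),
\]
with $C$ independent of $\d\in(1/\epsilon,\infty)$. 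A density argument based on the definition of $H^{\rho,p,\d}_0(\Omega)$ and closability of the weak nonlocal gradient then transfers this bound to arbitrary $u\in H^{\rho,p,\d}_0(\Omega)$ (showing en route that such $u$ also lies in $H^{\tilde\rho,p}_0(\Omega)$), and one concludes by noting that $D_{\rho_\d}u$ vanishes outside $\Omega_\d$, because $u=0$ in $\Omega^c$ and $\rho_\d$ has horizon $\d$; hence $\norm{D_{\rho_\d}u}_{L^p(\R^n;\R^n)}=\norm{D_{\rho_\d}u}_{L^p(\Omega_\d;\R^n)}$.

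For the displayed inequality, note that $\tilde\rho,\rho_\d\in L^1(\R^n)$, so \eqref{eq:drhofourier} gives $\widehat{D_{\tilde\rho}\varphi}=m_\d\,\widehat{D_{\rho_\d}\varphi}$ with $m_\d(\xi):=\widehat{Q}_{\tilde\rho}(\xi)/\widehat{Q}_{\rho_\d}(\xi)$, both factors positive by Lemma~\ref{lem:bms}. By the Mihlin--H\"ormander theorem (cf.~\cite[Theorem~6.2.7]{Gra14a}) it is enough to verify $\absb{\partial^\alpha m_\d(\xi)}\le C\abs{\xi}^{-\abs{\alpha}}$ for $\abs{\alpha}\le\tfrac{n}{2}+1$ with $C$ independent of $\d$. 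From the scaling relation $\widehat{Q}_{\rho_\d}=\d^n\overline{\rho}(1/\d)^{-1}\widehat{Q}_\rho(\d\,\cdot\,)$ in \eqref{eq:qrhohatinfty} and estimate \eqref{eq:bms2} for $\rho$, one checks that the bound \eqref{eq:bms2} holds for $\rho_\d$ with exactly the same constants $C_\alpha$ (all powers of $\d$ cancel); since \eqref{eq:bms2} also holds for $\tilde\rho$, the Leibniz and quotient rules reduce the task, precisely as in the proof of Theorem~\ref{th:comparison}, to showing that $\sup_{\d>1/\epsilon}\norm{m_\d}_{L^\infty(\R^n)}<\infty$.

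This uniform bound is the crux. The numerator is controlled by $\widehat{Q}_{\tilde\rho}\le\norm{Q_{\tilde\rho}}_{L^1(\R^n)}$ everywhere and by $\widehat{Q}_{\tilde\rho}(\xi)\le C\abs{\xi}^{\sigma-1}$ for $\abs{\xi}\ge1$ via \eqref{eq:bms1}. For the denominator I would split into three frequency regimes, using throughout that $1/(\d\epsilon)<1$ since $\d>1/\epsilon$: for $\abs{\xi}\ge1$, combining the lower bound in \eqref{eq:bms1} at the point $\d\xi$ with Lemma~\ref{le:wedge} (which, for the radius $1/\abs{\xi}\le1$, gives $\overline{\rho}_\d(1/\abs{\xi})\ge c\abs{\xi}^{n+\sigma-1}$) yields $\widehat{Q}_{\rho_\d}(\xi)\ge c\abs{\xi}^{\sigma-1}$; for $1/(\d\epsilon)\le\abs{\xi}\le1$, the same two ingredients (now with $1/\abs{\xi}\ge1$, so Lemma~\ref{le:wedge} gives $\overline{\rho}_\d(1/\abs{\xi})\ge c\abs{\xi}^{n+\gamma-1}$) yield $\widehat{Q}_{\rho_\d}(\xi)\ge c\abs{\xi}^{\gamma-1}\ge c$; and for $0<\abs{\xi}\le1/(\d\epsilon)$ one has $\abs{\d\xi}\le1/\epsilon$, hence $\widehat{Q}_\rho(\d\xi)\ge\min_{\overline{B_{1/\epsilon}(0)}}\widehat{Q}_\rho>0$, while $\d^n\overline{\rho}(1/\d)^{-1}\ge c\,\d^{1-\gamma}\ge c\,\epsilon^{\gamma-1}$ by the almost monotonicity in \ref{itm:upper} together with $\d>1/\epsilon$, so $\widehat{Q}_{\rho_\d}(\xi)$ is bounded below by a positive constant. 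Since all constants depend only on $\rho,n$ and $\epsilon$, combining the three regimes shows $m_\d$ is uniformly bounded, which completes the proof. The main obstacle is precisely the small-frequency regime $\abs{\xi}\le1/(\d\epsilon)$: there neither the large-frequency asymptotics of Lemma~\ref{lem:bms} nor the fractional sandwich of Lemma~\ref{le:wedge} are available, and one must exploit the explicit form of the scaling constant $\overline{\rho}(1/\d)^{-1}$ together with the standing restriction $\d>1/\epsilon$ to keep the lower bound on $\widehat{Q}_{\rho_\d}$ uniform.
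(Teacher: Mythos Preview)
Your proposal is correct and follows essentially the same route as the paper: compare $D_{\rho_\d}$ with a fixed finite-horizon $\sigma$-fractional gradient via the multiplier $m_\d=\widehat{Q}_{\tilde\rho}/\widehat{Q}_{\rho_\d}$, reduce the Mihlin--H\"ormander condition to $\alpha=0$ using the scaling-invariant derivative bounds \eqref{eq:bms2}, and bound $m_\d$ uniformly by estimating numerator and denominator separately with Lemma~\ref{lem:bms} and Lemma~\ref{le:wedge}. Your three-regime split for the denominator is just a slightly more explicit version of the paper's two cases (your first two regimes together correspond to the paper's case $\abs{\xi}>1/(\d\epsilon)$), and your quantitative bound $\d^n\overline{\rho}(1/\d)^{-1}\ge c\,\d^{1-\gamma}$ in the small-frequency regime makes precise what the paper phrases as ``$\overline{\rho}(1/\d)^{-1}\d^n\to\infty$ by \ref{itm:upper}''; the only caveat is that for the indicator-truncated kernel $\tilde\rho=\rho^\sigma\mathbbm{1}_{B_1(0)}$ Lemma~\ref{lem:bms} is not directly available (it needs the smoothness in \ref{itm:h1}--\ref{itm:derivatives}), so you should stick with the smooth cut-off version you also mention, exactly as the paper does.
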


\begin{proof}
Consider $D^\sigma_1:=D_{\rho^{\sigma}_1}$ induced by the kernel function
\begin{align}\label{eq:rho1sigma}
	\rho_1^\sigma= \frac{w}{| \cdot |^{n+\sigma-1}},
	\end{align}	
	where $w\in C^\infty_c(\R^n)$ is a non-negative radially decreasing function with $w(0)>0$ and $\supp w=B_1(0)$; note that $\rho_1^\sigma$ falls into the setting of Example~\ref{ex:kernels}\,a)  with $s=\sigma$ and recall that $\sigma$ is the parameter appearing in the hypothesis (H3) on $\rho$. 
	
 Then, by \eqref{estimateHsigmap} there is a constant $C>0$ such that
	\[
	\norm{u}_{H^{\sigma,p}(\R^n)} \leq C\norm{D^\sigma_1 u}_{L^p(\Omega_1;\R^n)} \quad \text{for all $u \in H^{\rho^\sigma_1,p}_0(\Omega)$}.
	\]
	\color{black}
The remaining proof shows that there is a constant $C>0$ independent of $\d$ such that
	\begin{align}\label{eq:estimates_graddelta}
	\norm{D^\sigma_1 \varphi}_{L^p(\R^n;\R^n)} \leq C\norm{D_{\rho_\d}\varphi}_{L^p(\R^n;\R^n)} \quad \text{for all $\varphi \in C_c^{\infty}(\R^n)$ and $\d \in (1/\epsilon,\infty)$,}
	\end{align}
	from which the claim follows after a density argument. 
	
	Let us define $m_\delta:\R^n\setminus \{0\}\to \R$ as
	\[
	m_\delta(\xi) := \frac{\widehat{Q}_{\rho^\sigma_1}(\xi)}{\widehat{Q}_{\rho_\d}(\xi)},
	% = \frac{\widehat{Q}_{\rho^\sigma_1}(\xi)}{\overline{\rho}(1/\d)^{-1}\d^n\widehat{Q}_{\rho}(\d\xi)};
	\]
	%The desired statement follows then. \color{olive}
	and observe that, in light of~\eqref{eq:drhofourier}, 
	\[
	\widehat{D^\sigma_1 \varphi}= m_\delta \widehat{D_{\rho_{\delta}}\varphi} \qquad \text{for $\varphi\in C_c^\infty(\R^n)$}.
	\]
%the last identity uses the scaling property~\eqref{eq:qrhohatinfty}. 
The estimate~\eqref{eq:estimates_graddelta} follows then directly from Fourier multiplier theory, once $m_\delta$ is confirmed to satisfy the Mihlin-H\"ormander condition with uniform constants, that is, 
	\begin{equation}\label{eq:mihlin2}
		\abs{\partial^{\alpha} m_\delta(\xi)} \leq C \abs{\xi}^{-\abs{\alpha}} 
	\end{equation}
		for all $\alpha \in \N_0^n$ with $\abs{\alpha} \leq n/2+1$ 
and $C>0$ a constant independent of $\d$. 
 
 \color{black}
  To this aim, observe that~\eqref{eq:bms2} implies for $\xi \not =0$ that
	\begin{align*}
		\abslr{\partial^{\alpha} \widehat{Q}_{\rho_\d}(\xi)} &=\abslr{\overline{\rho}(1/\d)^{-1}\d^{n+\abs{\alpha}}\partial^{\alpha}\widehat{Q}_\rho(\d\xi)}\\
		&\leq C \overline{\rho}(1/\d)^{-1}\d^{n+\abs{\alpha}}\abs{\d\xi}^{-\abs{\alpha}}\abslr{\widehat{Q}_\rho(\d\xi)}
		=C\abs{\xi}^{-\abs{\alpha}}\abslr{\widehat{Q}_{\rho_\d}(\xi)},
	\end{align*}
	with $C>0$ independent of $\delta$. 
	Since the same holds for $\widehat{Q}_{\rho^\sigma_1}$, we deduce via the Leibniz  and quotient rule that
	\[
	\abs{\partial^{\alpha} m_\d(\xi)} \leq C \abs{\xi}^{-\abs{\alpha}} \abs{m_\d(\xi)} \quad \text{for all $\xi \not =0$}.
	\]	
	
	Therefore, it remains to verify \eqref{eq:mihlin2} for $\alpha =0$, which corresponds to showing that $m_\delta$ is bounded independent of $\delta$. For simpler notation, we write $\langle \xi \rangle:=\sqrt{1+|\xi|^2}$ for $\xi\in \R^n$.  Since the estimate~\eqref{eq:bms1} in Lemma~\ref{lem:bms} along with~\eqref{eq:rho1sigma} allows us to deduce
	\[
	\widehat{Q}_{\rho^{\sigma}_1}(\xi) \leq C\abss{\xi}^{\sigma-1} \quad \text{for all $\xi \in \R^n$,}
	\]
	the proof of~\eqref{eq:mihlin2} for $\alpha=0$ can be reduced to verifying that
	\begin{align}\label{eq:estFourierQrho}
	\widehat{Q}_{\rho_\d}(\xi) \geq C\abss{\xi}^{\sigma-1} \quad \text{for all $\xi \in \R^n$.}
	\end{align}
	
	Let us first consider $\abs{\xi} \leq 1/(\d \epsilon)$. Then, in view of~\eqref{eq:qrhohatinfty}, 
	\begin{align}\label{eq:estQrhodelta11}
	\widehat{Q}_{\rho_\d}(\xi) \geq \overline{\rho}(1/\d)^{-1}\d^n \textstyle \min_{\overline{B_{1/\epsilon}(0)}} \widehat{Q}_{\rho} \geq C\geq C\abss{\xi}^{\sigma-1},
	\end{align}
	where $C$ is independent of $\d$ because $\overline{\rho}(1/\d)^{-1}\d^n \to \infty$ as $\d \to \infty$ by \ref{itm:upper}. For the case $\abs{\xi} > 1/(\d \epsilon)$, we use Lemma~\ref{lem:bms} and Lemma~\ref{le:wedge} to infer
	\begin{align}\label{eq:estQrhodelta12}
	\begin{split}
		\widehat{Q}_{\rho_\d}(\xi) &\geq C\overline{\rho}(1/\d)^{-1}\d^n\abs{\d\xi}^{-n}\overline{\rho}(1/\abs{\d\xi})\\
		&\geq C \abs{\xi}^{-n}\min\bigl\{\abs{\xi}^{n+\sigma-1},\abs{\xi}^{n+\gamma-1}\bigr\}\\
		&=C \min\bigl\{\abs{\xi}^{\sigma-1},\abs{\xi}^{\gamma-1}\bigr\}\geq C\abss{\xi}^{\sigma-1}.
		\end{split}
	\end{align}
	Finally,~\eqref{eq:estQrhodelta11} together with~\eqref{eq:estQrhodelta12} gives \eqref{eq:estFourierQrho}, and thus,~\eqref{eq:mihlin2}. This finishes the proof in light of the Mihlin-H\"ormander theorem (see e.g.~\cite[Theorem~6.2.7]{Gra14a}).
\end{proof}

\begin{remark} 
While~the previous proof is built on \eqref{eq:estimates_graddelta}, we mention that a statement parallel to Theorem~\ref{th:comparison} cannot be expected to hold for an unbounded parameter range of $\delta$. Indeed, this is due to the fact that the singular behavior of $\rho_{\infty}$ and $\rho_\d$ at the origin may be different, as one can see, for instance, from the two kernels in~Example~\ref{ex:sbar}\,b); they feature a stronger and weaker singularity than $\rho_\infty$, respectively.
\end{remark}

\color{black}
By combining Proposition~\ref{prop:poincare2} and Proposition~\ref{prop:convergenceinfinite}, we can now deduce the following compactness statement.

\begin{lemma}[Convergent subsequences for diverging horizon]\label{le:compactness2}
Let $(\d_j)_j \subset (1/\epsilon,\infty)$ be a sequence with $\d_j \to \infty$ and suppose that $u_j \in H^{\rho,p,\d_j}_0(\Omega)$ for each $j \in \N$ with
\[
\sup_{j \in \N} \norm{D_{\rho_{\d_j}} u_j}_{L^p(\Omega_{\d_j};\R^n)} < \infty.
\]
Then, there is a $u \in H^{s_\infty,p}_0(\Omega)$, such that, up to a non-relabeled subsequence, 
\[
u_j \to u \quad \text{in $L^p(\R^n)$} \quad \text{and} \quad D_{\rho_{\d_j}}u_j \weakto D^{s_\infty} u \quad \text{in $L^p(\R^n;\R^n)$ as $j \to \infty$.}
\]
Moreover, for every $\eta>0$ it holds that
\[
D_{\rho_{\d_j}}u_j \to D^{s_\infty} u \quad \text{in $L^p((\Omega_\eta)^c;\R^n)$ as $j \to \infty$.}
\]
\end{lemma}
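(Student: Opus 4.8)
The plan is to follow the same blueprint as the proof of Lemma~\ref{le:compactness}, with the vanishing-horizon tools replaced by their diverging-horizon analogues. First I would invoke Proposition~\ref{prop:poincare2} to get that $(u_j)_j$ is bounded in $H^{\sigma,p}(\R^n)$; since each $u_j$ is supported in $\overline{\Omega}$, the compact embedding $H^{\sigma,p}(\R^n)\hookrightarrow\hookrightarrow L^p_{\loc}(\R^n)$ from \eqref{embeddingBessel} yields, along a non-relabeled subsequence, a limit $u\in L^p(\R^n)$ with $u=0$ a.e.\ in $\Omega^c$ and $u_j\to u$ in $L^p(\R^n)$. Next, since $D_{\rho_{\delta_j}}u_j$ vanishes a.e.\ outside $\Omega_{\delta_j}$ (because $\supp\rho_{\delta_j}=\overline{B_{\delta_j}(0)}$ and $u_j$ is supported in $\overline{\Omega}$) and is bounded in $L^p(\Omega_{\delta_j};\R^n)$ by hypothesis, a further subsequence gives $D_{\rho_{\delta_j}}u_j\weakto V$ in $L^p(\R^n;\R^n)$ for some $V$.

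The second step identifies $V=D^{s_\infty}u$. Testing the weak nonlocal gradient against $\psi\in C_c^\infty(\R^n;\R^n)$ gives $\int_{\R^n}D_{\rho_{\delta_j}}u_j\cdot\psi\,dx=-\int_{\R^n}u_j\,\diver_{\rho_{\delta_j}}\psi\,dx$. Writing $\diver_{\rho_\delta}\psi=\sum_{i=1}^n e_i\cdot D_{\rho_\delta}\psi_i$ and noting that the argument of Proposition~\ref{prop:convergenceinfinite} is valid with the exponent $p'$ as well (it rests on the estimate of \cite[Proposition~1]{EGM22} together with Lemma~\ref{le:l1convergence}), one obtains $\diver_{\rho_{\delta_j}}\psi\to\diver_{\rho^{s_\infty}}\psi$ in $L^{p'}(\R^n)$, where $\rho^{s_\infty}=\rho_\infty$ by Lemma~\ref{le: fractional kernel as limit}. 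Passing to the limit yields $\int_{\R^n}V\cdot\psi\,dx=-\int_{\R^n}u\,\diver_{\rho^{s_\infty}}\psi\,dx$ for all such $\psi$, so $V=D^{s_\infty}u$ in the weak sense; in particular $u\in H^{\rho^{s_\infty},p}(\R^n)=H^{s_\infty,p}(\R^n)$ (cf.~Example~\ref{ex:Rieszfracgrad}), and since $u=0$ a.e.\ in $\Omega^c$, we conclude $u\in H^{s_\infty,p}_0(\Omega)$. This settles the first two convergences.

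For the convergence on $(\Omega_\eta)^c$, fix $\eta>0$. Because $u_j$ and $u$ vanish on $(\Omega_\eta)^c$, a density argument (approximating $u_j$ and $u$ in the respective norms by functions in $C_c^\infty(\Omega)$, for which the integral formulas hold pointwise, and using that the operators below are $L^p$-bounded) shows that for a.e.\ $x\in(\Omega_\eta)^c$
\[
D_{\rho_{\delta_j}}u_j(x)=T_ju_j(x):=-\int_{\overline{\Omega}}\frac{(x-y)\,\overline{\rho_{\delta_j}}(\abs{x-y})}{\abs{x-y}^2}\,u_j(y)\,dy
\]
and
\[
D^{s_\infty}u(x)=Tu(x):=-\int_{\overline{\Omega}}\frac{x-y}{\abs{x-y}^{n+s_\infty+1}}\,u(y)\,dy.
\]
On $(\Omega_\eta)^c\times\overline{\Omega}$ one has $\abs{x-y}\ge\eta$, so I would estimate the kernels of $T_j$ and of $T_j-T$ via Schur's test: for $\abs{x-y}\le\delta_j\epsilon$, Lemma~\ref{le:wedge} gives $\overline{\rho_{\delta_j}}(\abs{x-y})\le C\max\{\abs{x-y}^{-(n+\sigma-1)},\abs{x-y}^{-(n+\gamma-1)}\}$, while on the outer annulus $\delta_j\epsilon\le\abs{x-y}\le\delta_j$ one has $\overline{\rho_{\delta_j}}(\abs{x-y})\le C\,\overline{\rho}(1/\delta_j)^{-1}$ with $\overline{\rho}(1/\delta_j)^{-1}\delta_j^{n-1}\to0$ by the almost monotonicity in \ref{itm:lower} (exactly as in the proof of Lemma~\ref{le:l1convergence}); this gives $\sup_j\|T_j\|_{L^p(\R^n)\to L^p((\Omega_\eta)^c;\R^n)}<\infty$. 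For $T_j-T$, the difference kernel is dominated by $\abs{(\rho_{\delta_j}-\rho_\infty)(x-y)}/\abs{x-y}$, and since $\abs{z}^{-1}\le\max\{1,\eta^{-1}\}\min\{1,\abs{z}^{-1}\}$ for $\abs{z}\ge\eta$, Lemma~\ref{le:l1convergence} yields $\|T_j-T\|_{L^p(\R^n)\to L^p((\Omega_\eta)^c;\R^n)}\to0$. Combining with $u_j\to u$ in $L^p(\R^n)$ gives $T_ju_j\to Tu$ in $L^p((\Omega_\eta)^c;\R^n)$, i.e.\ $D_{\rho_{\delta_j}}u_j\to D^{s_\infty}u$ there (and in passing this re-confirms $V=D^{s_\infty}u$).

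The main obstacle is this last step, specifically the $\delta$-uniform Schur bound for $T_j$: Lemma~\ref{le:wedge} only controls $\overline{\rho_{\delta_j}}$ on $B_{\delta_j\epsilon}(0)$, so the behavior on the outer annulus $\delta_j\epsilon\le\abs{\,\cdot\,}\le\delta_j$ must be handled separately, and the decisive point is that \ref{itm:lower} forces $\overline{\rho}(1/\delta_j)^{-1}$ to decay fast enough relative to $\delta_j$ — precisely the phenomenon already exploited in Lemma~\ref{le:l1convergence}. A secondary, routine, technical point is justifying by density the pointwise integral representations of the weak gradients on $(\Omega_\eta)^c$.
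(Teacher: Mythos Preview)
Your proof is correct and follows the same route as the paper: Proposition~\ref{prop:poincare2} for the uniform $H^{\sigma,p}$-bound and $L^p$-compactness, integration by parts together with Proposition~\ref{prop:convergenceinfinite} (applied componentwise to the divergence) to identify the weak limit as $D^{s_\infty}u$, and a convolution representation on $(\Omega_\eta)^c$ for the strong convergence there. The only difference is that the paper streamlines your last step: writing $d_\delta(z)=-z\rho_\delta(z)/\abs{z}^2$, it notes that Lemma~\ref{le:l1convergence} already yields $\mathbbm{1}_{B_\eta(0)^c}d_{\delta_j}\to\mathbbm{1}_{B_\eta(0)^c}d_\infty$ in $L^1(\R^n)$, whence Young's inequality plus $u_j\to u$ in $L^p$ finishes the argument directly --- so the uniform Schur bound you flag as the ``main obstacle'' is in fact automatic from this $L^1$-convergence and need not be established separately.
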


\begin{proof}
We infer from Proposition~\ref{prop:poincare2} that $(u_j)_j$ is a bounded sequence in $H^{\sigma,p}(\R^n)$. Then, the compact embedding $H^{\sigma,p}(\R^n)\hookrightarrow\hookrightarrow L_{\loc}^p(\R^n)$ (see~\eqref{embeddingBessel}) together with the fact that each $u_j$ is supported in $\overline{\Omega}$ yields the existence of a $u \in L^p(\R^n)$ with $u = 0$ a.e.~in $\Omega^c$ such that, up to a non-relabeled subsequence, 
\begin{align}\label{eq:convergenceuj}
u_j \to u \qquad\text{ in $L^p(\R^n)$ as $j\to \infty$. }
\end{align}
After selecting a potential further subsequence, we find that $D_{\rho_{\d_j}}u_j \weakto V$ in $L^p(\R^n;\R^n)$ for some $V\in L^p(\R^n;\R^n)$. One can compute that
\begin{align*}
\int_{\R^n} V\,\phi\,dx &= \lim_{j \to\infty}\int_{\R^n} D_{\rho_{\d_j}}u_j\,\phi\,dx\\
&= - \lim_{j \to \infty}\int_{\R^n} u_j\,\Div_{\rho_{\d_j}}\phi\,dx=-\int_{\R^n} u\,\Div^{s_\infty}\phi\,dx,
\end{align*}  
for any $\phi \in C_c^{\infty}(\R^n;\R^n)$, 
where the last equality employs Proposition~\ref{prop:convergenceinfinite} adapted to the nonlocal divergence. This allows us conclude that $V=D^{s_\infty}u$ and $u \in H^{s_\infty,p}_0(\Omega)$. 

To prove the second part of the statement, we exploit that the nonlocal gradients on $(\Omega_\eta)^c$ can be expressed as a convolution. Precisely, let us define
\[
d_\d(z):=-\frac{z\rho_\d(z)}{\abs{z}^2} \quad  \text{and} \quad  d_\infty(z):=-\frac{z\rho_\infty(z)}{\abs{z}^2} \quad \text{for $z \in \R^n\setminus \{0\}$.}
\] 
For $\phi \in C_c^{\infty}(\Omega)$, we can compute in view of the radiality of $\rho$ that for any $x \in (\Omega_\eta)^c$,
\begin{align}\label{eq:Drhod}
D_{\rho_\d}\phi(x) = \int_{\R^n} -\frac{\phi(y)}{\abs{x-y}}\frac{x-y}{\abs{x-y}}\rho_\d(x-y)\,dy = (\mathbbm{1}_{B_{\eta}(0)^c}d_\d) * \phi (x);
\end{align}
since $\mathbbm{1}_{B_{\eta}(0)^c}d_\d \in L^1(\R^n)$, the identity~\eqref{eq:Drhod} can be extended via density to all $u \in H^{\rho,p,\d}_0(\Omega)$. In the same way, there is an analogous representation when considering the kernels $\rho_\infty$, that is,
\begin{align*}
D^{s_\infty} u = D_{\rho_\infty} u = (\mathbbm{1}_{B_{\eta}(0)^c}d_\infty) \ast u \qquad \text{on $(\Omega_\eta)^c$} 
\end{align*}  
for $u\in H_0^{s_\infty, p}(\Omega) = H_0^{\rho_\infty, p}(\Omega)$.

Furthermore, we observe that Lemma~\ref{le:l1convergence} induces the convergence
\begin{align*}
\mathbbm{1}_{B_{\eta}(0)^c}d_\d \to \mathbbm{1}_{B_{\eta}(0)^c}d_\infty \qquad \text{ in $L^1(\R^n)$ as $\d \to \infty$.}
\end{align*}  This allows us to conclude by Young's convolution inequality
% and the boundedness of $(u_j)_j$ in $L^p(\R^n)$ due to
and~\eqref{eq:convergenceuj} that
\[
\norm{D_{\rho_{\d_j}}u_j - D^{s_\infty} u}_{L^p((\Omega_\eta)^c;\R^n)}=\norm{(\mathbbm{1}_{B_{\eta}(0)^c}d_\d)*u_j - (\mathbbm{1}_{B_{\eta}(0)^c}d_\infty)*u}_{L^p((\Omega_\eta)^c;\R^n)}  \to 0 \ \text{as $\d \to 0$}.
\]
\end{proof}

\subsection[$\Gamma$-convergence $\d \to \infty$]{$\Gamma$-convergence $\d \to \infty$}
Based on the technical foundations provided in the previous sections, we are now in the position to prove the $\Gamma$-convergence for diverging horizon. We consider for $\delta\in (1/\eps, \infty)$ the functionals $\Fcal_\d:L^p(\R^n;\R^m) \to \R_\infty:=\R \cup \{\infty\}$ given by
\begin{equation}\label{eq:Fcald2}
\Fcal_\d(u):=\begin{cases}
\displaystyle\int_{\Omega_\d} f(x,D_{\rho_\d} u)\,dx &\text{for $u \in H^{\rho,p,\d}_0(\Omega;\R^m)$},\\
\infty &\text{else,}
\end{cases}
\end{equation}
where $f:\R^n \times \Rmn \to \R$ is a suitable Carath\'{e}odory integrand and $\rho_\delta$ is the scaled version of the kernel $\rho$, cf \eqref{eq:kernelrhodelta}. As made precise in Theorem~\ref{th:infty} below, the limiting object for $\delta\to \infty$ is the functional $\Fcal_\infty:L^p(\R^n;\R^m) \to \R_\infty$, 
\begin{equation}\label{eq:Fcalinf}
\Fcal_\infty(u):=\Fcal^{s_\infty}(u):=\begin{cases}
\displaystyle\int_{\R^n} f(x,D^{s_\infty} u)\,dx &\text{for $u \in H^{s_\infty,p}_0(\Omega;\R^m)$},\\
\infty &\text{else;}
\end{cases}
\end{equation}
The fractional parameter $s_\infty$ is here related to the kernel $\rho$ via $\lim_{\delta\to \infty}\rho_\delta  = \rho^{s_\infty}$, see~\eqref{eq:convergenceassumption}, Lemma~\ref{le: fractional kernel as limit}, and also Remark~\ref{rem:rhobars}. 

\begin{theorem}[$\Gamma$-convergence for diverging horizon]\label{th:infty}
Let $f:\R^n\times \Rmn \to \R$ be a Carath\'{e}odory integrand such that
\[
c\abs{A}^p - a(x)\leq f(x,A) \leq a(x)+C\abs{A}^p \quad \text{for a.e.~$x \in \R^n$ and all $A \in \Rmn$}
\]
with $c,C>0$ and $a\in L^1(\R^n)$. If $f(x,\cdot)$ is quasiconvex for a.e.~$x \in \Omega$, then the family $(\Fcal_\d)_{\d\in (1/\eps, \infty)}$ in \eqref{eq:Fcald2} $\Gamma$-converges with respect to $L^p(\R^n;\R^m)$-convergence to the functional $\Fcal_\infty$ in \eqref{eq:Fcalinf} as $\d \to \infty$, that is,
\[
\Gamma(L^p)\text{-}\lim_{\d \to \infty} \Fcal_\d =\Fcal_\infty.
\]
Additionally, the sequence $(\Fcal_\d)_\d$ is equi-coercive with respect to convergence in $L^p(\R^n;\R^m)$.
\end{theorem}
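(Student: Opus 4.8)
The strategy mirrors the proof of Theorem~\ref{th: Gamma convergence to 0}, relying on the three technical ingredients established in Section~\ref{se: convergence in infinity}: the Poincar\'e-type inequality uniform in $\d$ (Proposition~\ref{prop:poincare2}), the convergence $D_{\rho_\d}u \to D^{s_\infty}u$ in $L^p$ for Sobolev functions (Proposition~\ref{prop:convergenceinfinite}), and the compactness statement for diverging horizon (Lemma~\ref{le:compactness2}). First I would fix an arbitrary sequence $(\d_j)_j \subset (1/\eps,\infty)$ with $\d_j \to \infty$ and argue along it. For \emph{equi-coercivity}, the lower growth bound $f(x,A) \geq c|A|^p - a(x)$ with $a \in L^1(\R^n)$ gives $\Fcal_{\d_j}(u) \geq c\norm{D_{\rho_{\d_j}}u}_{L^p(\Omega_{\d_j};\Rmn)}^p - \norm{a}_{L^1(\R^n)}$; combined with Proposition~\ref{prop:poincare2} this yields $\Fcal_{\d_j}(u) \geq c' \norm{u}_{H^{\sigma,p}(\R^n)}^p - C'$ for all $u \in H^{\rho,p,\d_j}_0(\Omega;\R^m)$, and the compact embedding $H^{\sigma,p}(\R^n) \hookrightarrow\hookrightarrow L^p_{\loc}(\R^n)$ from \eqref{embeddingBessel} (applicable since the functions are supported in $\overline{\Omega}$) delivers equi-coercivity in $L^p(\R^n;\R^m)$.

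For the \emph{liminf-inequality}, take $u_j \to u$ in $L^p(\R^n;\R^m)$ and assume without loss of generality that $\liminf_j \Fcal_{\d_j}(u_j) = \lim_j \Fcal_{\d_j}(u_j) < \infty$, so that $u_j \in H^{\rho,p,\d_j}_0(\Omega;\R^m)$ and $\sup_j \norm{D_{\rho_{\d_j}}u_j}_{L^p(\Omega_{\d_j};\Rmn)} < \infty$ by the lower bound on $f$. Lemma~\ref{le:compactness2} then yields $u \in H^{s_\infty,p}_0(\Omega;\R^m)$ with $D_{\rho_{\d_j}}u_j \weakto D^{s_\infty}u$ in $L^p(\R^n;\Rmn)$. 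To pass to the limit in the functional I would use the translation operators: set $v_j := \Qcal_{\rho_{\d_j}}u_j = Q_{\rho_{\d_j}} * u_j$, which satisfies $\nabla v_j = D_{\rho_{\d_j}}u_j$. Unlike in Theorem~\ref{th: Gamma convergence to 0}, here the $v_j$ are not compactly supported in a fixed domain, so the argument must be handled on $\R^n$ directly; however, since $D^{s_\infty}u \in L^p(\R^n;\Rmn)$ and one has the weak $L^p$-convergence of the gradients, a lower semicontinuity theorem for quasiconvex functionals on $\R^n$ (e.g.~\cite[Theorem~8.11]{Dac08} applied on large balls together with the $L^1$-integrability of $a$ and a diagonal/exhaustion argument, using that the nonlocal gradients converge strongly on $(\Omega_\eta)^c$ by the last part of Lemma~\ref{le:compactness2} to control the contribution near infinity) gives $\liminf_j \int_{\R^n} f(x,D_{\rho_{\d_j}}u_j)\,dx \geq \int_{\R^n} f(x,D^{s_\infty}u)\,dx = \Fcal_\infty(u)$, and since $\Omega_{\d_j} = \R^n$ eventually is false (the $\Omega_{\d_j}$ exhaust $\R^n$ but the integrals over $\Omega_{\d_j}$ versus $\R^n$ differ only by $\int_{\Omega_{\d_j}^c} f(x,0)\,dx \to 0$ as $\d_j \to \infty$ since $f(x,0) \leq a(x) \in L^1$), this completes the liminf-inequality.

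For the \emph{recovery sequence}, given $u \in H^{s_\infty,p}_0(\Omega;\R^m)$ I would like to use the constant sequence, but $u$ need not lie in $W^{1,p}$, so Proposition~\ref{prop:convergenceinfinite} does not apply directly; instead I would approximate $u$ by $\phi_k \in C_c^\infty(\Omega;\R^m)$ in the $H^{s_\infty,p}$-norm (density of smooth functions in $H^{s_\infty,p}_0(\Omega;\R^m)$), note $\phi_k \in W^{1,p}(\R^n;\R^m) \cap H^{\rho,p,\d}_0(\Omega;\R^m)$, apply Proposition~\ref{prop:convergenceinfinite} to get $D_{\rho_{\d_j}}\phi_k \to D^{s_\infty}\phi_k$ in $L^p$ as $j \to \infty$, and combine this with a uniform-in-$\d$ continuity estimate $\norm{D_{\rho_\d}(\phi_k - u)}_{L^p(\R^n;\Rmn)} \leq C\norm{\phi_k - u}_{H^{\sigma,p}(\R^n)}$ (which follows from Proposition~\ref{prop:poincare2} or from the Fourier-multiplier bounds in its proof, comparing $D_{\rho_\d}$ with $D^\sigma_1$) together with the continuity of $\Fcal_\infty$ along $H^{s_\infty,p}$-convergent sequences via the $p$-growth of $f$; a standard diagonal argument over $k$ and $j$ then produces a recovery sequence $(u_j)_j$ with $u_j \to u$ in $L^p$ and $\Fcal_{\d_j}(u_j) \to \Fcal_\infty(u)$. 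Finally, the convergence of minimizers follows from the equi-coercivity, the $\Gamma$-convergence, and the existence of minimizers of $\Fcal_\d$ guaranteed by Theorem~\ref{th:existence}. The main obstacle I anticipate is the liminf-inequality: because the competitor functions are genuinely defined on all of $\R^n$ and the domains of integration $\Omega_{\d_j}$ grow unboundedly, one cannot reduce to a fixed bounded domain as in the $\d \to 0$ case, and care is needed to control the energy contribution near spatial infinity—this is where the strong convergence of $D_{\rho_{\d_j}}u_j$ on complements $(\Omega_\eta)^c$ from Lemma~\ref{le:compactness2}, together with the $L^1$-bound on $a$, becomes essential.
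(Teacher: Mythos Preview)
Your overall strategy matches the paper's proof: equi-coercivity via Proposition~\ref{prop:poincare2}, liminf via translation plus Lemma~\ref{le:compactness2}, recovery via smooth approximation and a diagonal argument. Two points deserve correction.

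\textbf{Liminf.} Your sketch is vaguer than necessary. The paper does not attempt a quasiconvex lower-semicontinuity argument on all of $\R^n$; instead it splits the integral as $\int_{\Omega}+\int_{\Omega_{\d_j}\setminus\Omega}$. On the bounded domain $\Omega$ one has (up to additive constants) $v_j\weakto v$ in $W^{1,p}(\Omega;\R^m)$ with $\nabla v=D^{s_\infty}u$, so the standard quasiconvex lsc theorem applies directly there. On $\Omega_{\d_j}\setminus\Omega$ one uses the \emph{strong} convergence $D_{\rho_{\d_j}}u_j\to D^{s_\infty}u$ in $L^p((\Omega_\eta)^c)$ from Lemma~\ref{le:compactness2}, together with the two-sided bound $|f(x,A)|\leq a(x)+C|A|^p$, and then lets $\eta\to 0$. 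This decomposition cleanly avoids the issue you flag about $v_j$ not being bounded in $W^{1,p}(\R^n)$, and no exhaustion argument over large balls is needed.

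\textbf{Recovery.} The estimate you propose,
\[
\norm{D_{\rho_\d}(\phi_k-u)}_{L^p(\R^n;\Rmn)}\leq C\norm{\phi_k-u}_{H^{\sigma,p}(\R^n)},
\]
does \emph{not} follow from Proposition~\ref{prop:poincare2}; that proposition gives the reverse inequality. In fact such a bound is generally false: $H^{\sigma,p}$ need not embed into $H^{\rho,p,\d}$ when $\rho$ has a singularity stronger than order $\sigma$ at the origin (e.g.\ Example~\ref{ex:kernels}\,b) with $\kappa=1$). Fortunately you do not need this step. The paper simply computes the iterated limit
\[
\lim_{k\to\infty}\lim_{j\to\infty}\Fcal_{\d_j}(\phi_k)=\lim_{k\to\infty}\Fcal_\infty(\phi_k)=\Fcal_\infty(u),
\]
using Proposition~\ref{prop:convergenceinfinite} for the inner limit and the $p$-growth of $f$ plus $\phi_k\to u$ in $H^{s_\infty,p}$ for the outer one, and then extracts a diagonal sequence via Attouch's lemma. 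You already mention these ingredients; just drop the spurious uniform-in-$\d$ estimate.
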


\begin{proof}
Let $(\d_j)_j \subset (1/\epsilon,\infty)$ be a sequence converging to $\infty$ as $j \to \infty$. \smallskip

\textit{Equi-coercivity:} From Proposition~\ref{prop:poincare2} and the lower bound on $f$, we deduce that
\[
\Fcal_{\d_j}(u)\geq C \norm{u}_{H^{\sigma,p}(\R^n)}  - \norm{a}_{L^1(\R^n)}
\]
for all $j \in \N$ and $u \in H^{\rho,p,\d_j}_0(\Omega;\R^m)$. The embedding~\eqref{embeddingBessel} now immediately gives the stated equi-coercivity.\smallskip

\textit{Liminf-inequality:} Consider a sequence $(u_j)_j \subset L^p(\R^n;\R^m)$ with $u_j \to u$ in $L^p(\R^n;\R^m)$ satisfying, without loss of generality,
\[
\liminf_{j \to \infty} \Fcal_{\d_j} (u_j) = \lim_{j \to \infty} \Fcal_{\d_j}(u_j) < \infty.
\] 
Then, $u_j \in H^{\rho,p,\d_j}_0(\Omega;\R^m)$ for each $j \in \N$ and $\sup_{j \in \N}\norm{D_{\rho_{\d_j}}u_j}_{L^p(\Omega_{\d_j};\R^m)}< \infty$, so that Lemma~\ref{le:compactness2} yields that $u$ lies in $H^{s_\infty,p}_0(\Omega;\R^m)$ with
\begin{align}\label{eq:convDrhoj}
 D_{\rho_{\d_j}}u_j \weakto D^{s_\infty} u\qquad \text{ in $L^p(\R^n;\Rmn)$ as $j\to \infty$.}
 \end{align}

Similarly to the proof of Theorem~\ref{th: Gamma convergence to 0}, we perform a translation to the classical gradient setting in order to estimate the integral contribution over $\Omega$. With
$v_j:=\Qcal_{\rho_{\d_j}} u_j \in W^{1,p}(\R^n;\R^m)$ for $j \in \N$, we have that $\nabla v_j = D_{\rho_{\d_j}}u_j$ due to
Lemma~\ref{le:translation}. Moreover, there exists a $v \in W^{1,p}_{\loc}(\R^n;\R^m)$ with $\nabla v = D^{s_\infty}u$ by \cite[Proposition~3.1\,$(i)$]{KreisbeckSchonberger}.
We therefore obtain in view of~\eqref{eq:convDrhoj} that $\nabla v_j \weakto \nabla v$ in $L^p(\Omega;\Rmn)$, and (up to translation by constants) that $v_j \weakto v$ in $W^{1,p}(\Omega;\R^m)$. A standard lower semicontinuity result for quasiconvex integrands (cf.~\cite[Theorem~8.11]{Dac08}) then yields
\begin{align}
\begin{split}\label{eq:inner}
\liminf_{j \to \infty} \int_{\Omega} f(x,D_{\rho_{\d_j}}u_j)\,dx &=\liminf_{j \to \infty} \int_{\Omega} f(x,\nabla v_j)\,dx\\
&\geq \int_{\Omega}f(x,\nabla v)\,dx = \int_{\Omega}f(x,D^{s_\infty}u)\,dx.
\end{split}
\end{align}

Regarding the integral contributions over $\Omega^c$, observe that  for any $\eta>0$, 
\begin{align*}
D_{\rho_{\d_j}}u_j \to D^{s_\infty}u\qquad \text{ in $L^p((\Omega_\eta)^c;\R^n)$ as $j\to \infty$.  }
\end{align*}
Together with the upper and lower bound on $f$ and Lebesgue's dominated convergence theorem, we obtain
\begin{align*}
\liminf_{j \to \infty} \int_{\Omega_{\d_j}\setminus\Omega} f(x,D_{\rho_{\d_j}}u_j)\,dx &\geq \liminf_{j \to \infty} \int_{\Omega_{\d_j}\setminus\Omega_\eta} f(x,D_{\rho_{\d_j}}u_j)\,dx - \int_{\Omega_{\eta}\setminus \Omega} a(x)\, dx \\
&=\int_{\R^n\setminus\Omega_\eta} f(x,D^{s_\infty}u)\,dx - \int_{\Omega_{\eta}\setminus \Omega} a(x)\, dx \\
&\geq \int_{\R^n\setminus\Omega} f(x,D^{s_\infty}u)\,dx - \int_{\Omega_\eta\setminus\Omega} 2a(x) +C\abs{D^{s_\infty}u}^p\,dx.
\end{align*}
Letting $\eta \to 0$ under consideration of $\abs{\partial\Omega}=0$ results in
\begin{align}\label{eq:outer}
\liminf_{j \to \infty} \int_{\Omega_{\d_j}\setminus\Omega} f(x,D_{\rho_{\d_j}}u_j)\,dx \geq  \int_{\R^n\setminus \Omega}f(x,D^{s_\infty}u)\,dx.
\end{align}
The desired liminf-inequality follows from adding \eqref{eq:inner} and~\eqref{eq:outer}.
\smallskip

\color{black}
\textit{Recovery sequence:} It suffices to consider $u \in H^{s_\infty,p}_0(\Omega;\R^m)$. Let $(u_k)_k \subset C_c^{\infty}(\Omega;\R^m)$ be a sequence such that $u_k \to u$ in $H^{s_\infty,p}_0(\Omega;\R^m)$ as $k\to \infty$. From Proposition~\ref{prop:convergenceinfinite} and the second part of Lemma~\ref{le:compactness2}, we deduce that
\begin{align*}
D_{\rho_{\d_j}}u_k \to D^{s_\infty}u_k \quad \text{ in $L^p(\R^n;\Rmn)$ as $j \to \infty$ for all $k \in \N$. }
\end{align*}
Hence, the upper bound on $f$ and a twofold application of Lebesgue's dominated convergence theorem shows
\[
\lim_{k \to \infty} \lim_{j \to \infty} \int_{\Omega_{\d_j}} f(x,D_{\rho_{\d_j}} u_k)\,dx = \lim_{k \to \infty} \int_{\R^n} f(x,D^{s_\infty}u_k)\,dx = \int_{\R^n} f(x,D^{s_\infty}u)\,dx.
\]
Finally, a recovery sequence is obtained by extracting a suitable diagonal sequence in the sense of Attouch~\cite[Lemma~1.15, Proposition~1.16]{Att84}.
\end{proof}

\begin{remark}
a) We find as a consequence of the $\Gamma$-convergence and equi-coercivity proven in Theorem~\ref{th:infty} that the minimizers of the functionals $\Fcal_\d$ in~\eqref{eq:Fcald2}, whose existence is guaranteed by Theorem~\ref{th:existence}, converge (up to a subsequence) in $L^p$ to a minimizer of $\Fcal^{s_\infty}$. In particular, this result applies to all kernels from Example~\ref{ex:kernels} with their limiting fractional exponents $s_\infty$ computed in Example~\ref{ex:sbar}. \smallskip
\color{black}

b) Note that Theorem~\ref{th:infty} can be readily generalized to functionals defined on the spaces $g + H^{\rho,p,\d}_0(\O;\R^m)$ with a given complementary value $g \in W^{1,p}(\R^n;\R^m)$, considering that Proposition~\ref{prop:convergenceinfinite} holds for all Sobolev functions in $W^{1,p}(\R^n)$. 
\end{remark}

\color{black}
\subsection*{Acknowledgments}
The authors would like to thank Jos\'e Carlos Bellido and Carlos Mora-Corral for helpful discussions on the topic. JC has been supported by the Spanish Agencia Estatal de Investigaci\'on through the project  PID2021-124195NB-C32, and by the Madrid Government (Comunidad de Madrid, Spain) under the multiannual Agreement with UAM in the line for the Excellence of the University Research Staff in the context of the V PRICIT (Regional Programme of Research and Technological Innovation). CK and HS gratefully acknowledge partial support by the Bayerische Forschungsallianz, through the project BayIntAn\_KUEI\_2023\_16.

\bibliography{bibliography}{}
\bibliographystyle{abbrv}
\end{document}